\newcommand{\veps}{\varepsilon}
\newcommand{\R}{\mathbb{R}}
\newcommand{\C}{\mathbb{C}}
\newcommand{\N}{\mathbb{N}}
\newcommand{\Z}{\mathbb{Z}}
\newtheorem{lettertheorem}{Theorem}
\newtheorem{letterlemma}[lettertheorem]{Lemma}
\newtheorem{defin}{Definition}[section]
\newtheorem{theorem}[defin]{Theorem}
\newtheorem{exa}{Example}
\newenvironment{example}{\begin{exa}\rm}{\end{exa}}\newtheorem{lemma}[defin]{Lemma}
\newtheorem{corollary}[defin]{Corollary}
\newtheorem{rem}{Remark}
\newenvironment{remark}{\begin{rem}\rm}{\end{rem}}
\numberwithin{equation}{section}
\renewcommand{\ps@myheadings}{%
\renewcommand{\@evenhead}%
{{\rm\thepage}\hfil{\sc J.~Heittokangas, J.~Wang, Z.~T.~Wen and H.~Yu}\hfil}%
\renewcommand{\@oddhead}%
{\hfil{{\sc The $\varphi$-order for the $q$-differences}\hfil{\rm\thepage}}}%
\renewcommand{\@evenfoot}{}%
\renewcommand{\@oddfoot}{}%
}\makeatother \pagestyle{myheadings}
\title{Meromorphic functions of finite $\varphi$-order and linear $q$-difference equations}
\author[Heittokangas]{J.~Heittokangas}
\address[Heittokangas\\Yu]{
Department of Physics and Mathematics, University of Eastern Finland, P.O.~Box 111, 80101 Joensuu, Finland}
\author[Wang]{J.~Wang}
\address[Wang]{School of Mathematical Sciences, Fudan University, Shanghai  200433, P. R. China}
\author[Wen]{Z.~T.~Wen}
\address[Wen]{Department of Mathematics, Shantou University, Shantou 515063, Guangdong, P. R. China}
\author[Yu]{Hui Yu$^*$}
\thanks{$^*$ Corresponding author.}
\email{janne.heittokangas@uef.fi \\majwang@fudan.edu.cn\\zhtwen@stu.edu.cn\\huiy@uef.fi}
\date{\today}
\begin{document}
\maketitle

\begin{abstract}
   The $\varphi$-order was introduced in 2009 for meromorphic functions in the unit disc, and was used as a growth indicator for solutions of linear differential equations.  In this paper,  the properties of meromorphic functions in the complex plane are investigated in terms of the $\varphi$-order, which measures the growth of functions  between the classical order and the  logarithmic order. Several results on value distribution of meromorphic functions are discussed by using the $\varphi$-order and the $\varphi$-exponent of convergence. Instead of linear differential equations, the applications in the complex plane lie in linear $q$-difference equations.

\medskip
\noindent
\textbf{Key words:} $\varphi$-exponent of convergence, $\varphi$-order, logarithmic $q$-difference, meromorphic function, $q$-difference equation.

\medskip
\noindent
\textbf{MSC 2020:} Primary 39A13; Secondary 30D35, 39A45.
\end{abstract}

\renewcommand{\thefootnote}{ }
\footnote{}

\section{Introduction}

Let $\varphi:(R_0,\infty)\to (0,\infty)$ be a non-decreasing unbounded function. An increasing function $T:(R_0,\infty)\to(0,\infty)$ is said to be of $\varphi$-order $\rho_\varphi(T)$ if
	$$
	\rho_\varphi(T)=\limsup_{r\to\infty}\frac{\log T(r)}{\log\varphi(r)}.
	$$
For a meromorphic function $f$ in $\mathbb{C}$, the $\varphi$-order of $f$ is defined as  
$\rho_\varphi(f)=\rho_\varphi(T(r,f))$, where $T(r,f)$ is the Nevanlinna characteristic function of $f$. 
Clearly, the classical order $\rho(f)$ and the logarithmic order $\rho_{\log}(f)$ follow as its special cases when choosing $\varphi(r)=r$ and $\varphi(r)=\log r$, respectively.
Thus the   $\varphi$-order is a general growth scale, and can be considered as a continuum between $\rho_{\log}(f)$ and $\rho(f)$
when we impose a global restriction
	\begin{equation}\label{general-restriction}
	\log r\leq \varphi(r)\leq r,\quad r\geq R_0.
	\end{equation}

For an entire function $f$, $T(r,f)$ can be replaced by $\log M(r,f)$ in the definitions of $\rho(f)$ and $\rho_{\log}(f)$ by the standard relation between $T(r,f)$ and $M(r,f)=\max_{|z|=r}|f(z)|$,  see \cite[p.~23]{Rubel}. The same is true for the $\varphi$-order, namely
	\begin{equation}\label{varphi-logM}
	\rho_\varphi(f)=\limsup_{r\to\infty}\frac{\log \log M(r,f)}{\log\varphi(r)},
	\end{equation}
provided that $\varphi(r)$ is subadditive, that is, $\varphi(a+b)\leq \varphi(a)+\varphi(b)$ for all $a,b\geq R_0$.
In particular, this gives  $\varphi(2r)\leq 2\varphi(r)$, which yields \eqref{varphi-logM}. We will see that many results involving the $\varphi$-order also require the subadditivity of $\varphi(r)$. 
\par Up to a normalization,
subadditivity is implied by concavity. Indeed, if $\varphi(r)$ is concave on $[0,\infty)$ and satisfies $\varphi(0)\geq 0$, then
for $t\in [0,1]$,
	$$
	\varphi(tx)=\varphi(tx+(1-t)\cdot 0)\geq t\varphi(x)+(1-t)\varphi(0)\geq t\varphi(x),
	$$
so that by choosing $t=\frac{a}{a+b}$ or $t=\frac{b}{a+b}$,
	\begin{equation}\label{subadditive}
	\begin{split}
	\varphi(a+b) &=\frac{a}{a+b}\varphi(a+b)+\frac{b}{a+b}\varphi(a+b)\\
	&\leq \varphi\left(\frac{a}{a+b}(a+b)\right)+\varphi\left(\frac{b}{a+b}(a+b)\right)\\
	&=\varphi(a)+\varphi(b),\quad a,b>0.
	\end{split}
	\end{equation}
As a non-decreasing, subadditive and unbounded function, $\varphi(r)$ satisfies
    $$
	\varphi(r)\leq \varphi(r+R_0)\leq \varphi(r)+\varphi(R_0)
	$$
for any $R_0\geq 0$. This yields $\varphi(r)\sim \varphi(r+R_0)$ as $r\to\infty$, and hence the normalization discussed above
can be assumed, whenever needed.

It should be noted that there exist differences between the order and the logarithmic order. For example, for a meromorphic function $f$ and for $a\in\mathbb{C}$, we have
	\begin{align}
	\rho_{\log}(N(r))&=\rho_{\log}(n(r))+1,\label{logorder-N-n}\\
	\rho(N(r))&=\rho(n(r)),\label{order-N-n}
 	\end{align}
where $n(r)=n(r,a,f)$ and $N(r)=N(r,a,f)$ denote the counting function and the integrated counting function of the zeros of $f-a$.
The identity \eqref{order-N-n}
is well-known, while \eqref{logorder-N-n} is proved in \cite[Theorem~4.1]{Chern}. The $\varphi$-order case in this situation along with some other
situations will be discussed later on.

The $\varphi$-order was defined for meromorphic functions in the unit disc, and it was used to measure the rate of the growth of solutions of linear differential equations (LDE's) in \cite{CHR}. However, LDE's in the complex plane seem not to be the right target of application for two reasons: (1) Transcendental solutions of LDE's with polynomial coefficients are known to be of positive order \cite{G-S-W}. (2) Most solutions of LDE's with transcendental coefficients are typically of infinite order. It is clear that the classical order is enough for these cases.

Instead of LDE's, the $\varphi$-order in the complex plane is applied to $q$-difference equations, because they have been considered earlier for functions of zero classical order \cite{BHKM} and for functions of finite logarithmic order \cite{CF,ZT}. Since the logarithmic order appears to be restrictive \cite[p.~267]{CF}, it would be of interest to see some results on the transition  between these two growth ranges.

To investigate $q$-difference equations, we recall $q$-difference analogue of the lemma on the logarithmic derivative \cite[Lemma~5.1]{BHKM}. It is a fundamental tool in $q$-difference operators and $q$-difference equations. Based on the lemma, we will obtain upper bounds for logarithmic $q$-differences in terms
of $\varphi(r)$.  

\begin{letterlemma}\label{BHMK.lemma}
Let $f$ be a meromorphic function such that $f(0)\neq 0, \infty$, and let $q\in\C\backslash\{0\}$. Then
	\begin{equation*}
	\begin{split}
	m\left(r,\frac{f(qz)}{f(z)}\right)\leq &\left(n(\lambda,f)+n\left(\lambda,\frac{1}{f}\right)\right)
	\left(\frac{|q-1|^{\delta}(|q|^{\delta}+1)}{\delta(1-\delta)|q|^{\delta}}+\frac{|q-1|r}{\lambda-|q|r}+\frac{|q-1|r}{\lambda-r}\right)\\
	&+\frac{4|q-1|r\lambda}{(\lambda-r)(\lambda-|q|r)}\left(T(\lambda,f)+\log^+\left|\frac{1}{f(0)}\right|\right),
	\end{split}
	\end{equation*}
where $z=re^{i\phi}$, $\lambda>\max\{r, |q|r\}$ and $0<\delta<1$.
\end{letterlemma}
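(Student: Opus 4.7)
The plan is to derive the estimate from the Poisson--Jensen formula applied at two nearby points. Since $f(0)\neq 0,\infty$ and $\lambda>\max\{r,|q|r\}$, both $z$ and $qz$ lie strictly inside $|w|<\lambda$, so the formula applies at both points. Writing
$$
\log\left|\frac{f(qz)}{f(z)}\right|=\log|f(qz)|-\log|f(z)|,
$$
I would subtract the two Poisson--Jensen representations and then integrate $\log^+$ of the result in $\phi$ (with $z=re^{i\phi}$) to recover $m(r,f(qz)/f(z))$. The bound then splits naturally into a boundary integral contribution and finite sums over zeros $a_\mu$ and poles $b_\nu$ of $f$ in $|w|<\lambda$.

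For the boundary part, the difference of Poisson kernels at $qz$ and $z$ can be estimated by a constant multiple of
$$
\frac{|q-1|\,r\,\lambda}{(\lambda-r)(\lambda-|q|r)}
$$
using the elementary inequality $|\lambda e^{i\theta}-w|\geq\lambda-|w|$ for $|w|<\lambda$. Combined with $\int_0^{2\pi}|\log|f(\lambda e^{i\theta})||\,d\theta\leq 2\pi\bigl(m(\lambda,f)+m(\lambda,1/f)\bigr)$, and with Jensen's formula applied at the origin to convert $m(\lambda,1/f)$ into $T(\lambda,f)+\log^+|1/f(0)|$, this reproduces the last line of the asserted inequality.

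For a zero $a_\mu$ (the poles $b_\nu$ are treated identically), the contribution is
$$
\log\left|\frac{\lambda^2-\bar a_\mu qz}{\lambda(qz-a_\mu)}\right|-\log\left|\frac{\lambda^2-\bar a_\mu z}{\lambda(z-a_\mu)}\right|,
$$
which I would split into a ``boundary'' factor $(\lambda^2-\bar a_\mu qz)/(\lambda^2-\bar a_\mu z)$ and a ``local'' Blaschke factor $(qz-a_\mu)/(z-a_\mu)$. The boundary factor is bounded crudely via $|\lambda^2-\bar a_\mu w|\geq\lambda(\lambda-|w|)$ and, after summing one unit per zero and per pole and adding the analogous pole contribution, yields the $|q-1|r/(\lambda-|q|r)$ and $|q-1|r/(\lambda-r)$ terms multiplied by $n(\lambda,f)+n(\lambda,1/f)$.

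The main obstacle is the local Blaschke factor, where the logarithms can be individually unbounded as $\phi$ varies. Here I would use, for $0<\delta<1$, the pointwise majorant $\log^+ x\leq x^\delta/\delta$ together with the classical uniform estimate
$$
\int_0^{2\pi}\frac{d\phi}{|re^{i\phi}-a|^\delta}\leq \frac{C}{r^\delta(1-\delta)},\qquad a\in\C,
$$
whose blow-up as $\delta\uparrow 1$ is precisely the source of the $1/(1-\delta)$ factor, while $1/\delta$ records the $\log^+$-to-power step. Applying this separately to the terms involving $qz-a_\mu$ and $z-a_\mu$, and tracking the scaling $|qz-a_\mu|=|q|\,|z-a_\mu/q|$ together with the triangle-inequality comparison of $qz-a_\mu$ and $z-a_\mu$ in powers of $|q-1|$, produces the explicit coefficient $|q-1|^\delta(|q|^\delta+1)/(\delta(1-\delta)|q|^\delta)$. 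Summing all of the above contributions, including the analogous estimates from the poles, gives the stated bound.
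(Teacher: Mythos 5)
The paper does not prove this lemma at all: it is quoted verbatim as \cite[Lemma~5.1]{BHKM}, so there is no in-paper argument to compare against. Your outline, however, is the standard (and, as far as I can tell, the original) route. The decomposition you use -- Poisson--Jensen at $qz$ and at $z$, a boundary kernel difference estimated via
$$
\left|\frac{w+qz}{w-qz}-\frac{w+z}{w-z}\right|
=\frac{2|w|\,|q-1|\,|z|}{|w-qz|\,|w-z|}\leq\frac{2\lambda r|q-1|}{(\lambda-|q|r)(\lambda-r)},\qquad |w|=\lambda,
$$
followed by $m(\lambda,f)+m(\lambda,1/f)\leq 2\bigl(T(\lambda,f)+\log^+|1/f(0)|\bigr)$ via Jensen, and the split of each Blaschke term into the ``far'' factor $(\lambda^2-\bar a_\mu qz)/(\lambda^2-\bar a_\mu z)$ (giving $|q-1|r/(\lambda-r)+|q-1|r/(\lambda-|q|r)$ per zero or pole) and the ``local'' factor $(z-a_\mu)/(qz-a_\mu)$ handled by $\log(1+x)\leq x^\delta/\delta$ and averaging over $\phi$ -- is exactly what produces the three displayed pieces, and you have correctly identified where each constant comes from, including the factor $|q|^{-\delta}$ from the change of variable $|qre^{i\phi}-a_\mu|=|q|\,|re^{i\phi}-a_\mu/q|$ and the extra $+1$ from the reciprocal local factor.

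Two small points where the write-up is imprecise rather than wrong. First, to land on the exact coefficient $\frac{|q-1|^\delta(|q|^\delta+1)}{\delta(1-\delta)|q|^\delta}$ you need the averaged integral estimate with the sharp constant, namely
$$
\frac{1}{2\pi}\int_0^{2\pi}\frac{d\phi}{|re^{i\phi}-a|^\delta}\leq\frac{1}{(1-\delta)\,\max(r,|a|)^\delta}\leq\frac{1}{(1-\delta)\,r^\delta},
$$
whereas you wrote it with an unspecified constant $C$; if $C>1$ your argument yields the stated bound only up to a multiplicative absolute constant, so you should cite or re-derive the sharp version. Second, for the ``far'' Blaschke factor you need both directions of $|\log|w||\leq\max\{|w-1|,|1/w-1|\}$ so that summing over zeros and poles gives both denominators $\lambda-r$ and $\lambda-|q|r$; your phrasing via ``$|\lambda^2-\bar a_\mu w|\geq\lambda(\lambda-|w|)$'' only spells out one of these. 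Neither point is a conceptual gap, but both deserve a line if this were to be expanded into a full proof.
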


Taking $\lambda=r^2$ in Lemma~\ref{BHMK.lemma}, \cite[Theorem~2.2]{ZT} gives a $q$-difference analogue of the lemma on the logarithmic derivative as
	\begin{equation}\label{ZT-estimate}
	m\left(r,\frac{f(qz)}{f(z)}\right)=O\left((\log r)^{\rho_{\log}(f)-1+\varepsilon}\right)
	\end{equation}
for each $\varepsilon>0$, if $\rho_{\log }(f)<\infty$. The estimate \eqref{ZT-estimate} is then used to prove \cite[Theorem 1.1]{ZT} on linear $q$-difference equations, which is stated as follows.

\begin{lettertheorem}\label{geq.theorem}
Let $a_0(z),\ldots,a_n(z)$ be entire functions of finite logarithmic order such that, for some $i\in\{0,\ldots,n\}$,
    $$
      \rho_{\log}(a_i)>\max_{j\neq i} \{\rho_{\log}(a_j)\},
    $$
 and let $q\in\C\backslash\{0\}$ be such that $|q|\neq 1$. If $f$ is a meromorphic solution of
	$$
	\sum_{j=0}^na_j(z)f(q^jz)=0,
	$$
then $\rho_{\log}(f)=\rho_{\log}(a_i)+1$.
\end{lettertheorem}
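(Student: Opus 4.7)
The plan is to prove $\rho_{\log}(f)=\rho_{\log}(a_i)+1$ by establishing the two inequalities separately; in both halves the $q$-difference analogue \eqref{ZT-estimate} of the logarithmic derivative lemma is the decisive ingredient. Write $\rho=\rho_{\log}(a_i)$ and $\sigma=\max_{j\ne i}\rho_{\log}(a_j)<\rho$.

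For the lower bound $\rho_{\log}(f)\ge\rho+1$, the case $\rho_{\log}(f)=\infty$ is automatic, so assume $\rho_{\log}(f)<\infty$. I would isolate the dominant term and divide through by $f(q^iz)$ to obtain
\[
a_i(z)=-\sum_{j\ne i}a_j(z)\,\frac{f(q^jz)}{f(q^iz)}.
\]
Since $a_i$ is entire, $T(r,a_i)=m(r,a_i)$, and passing to the proximity function yields
\[
T(r,a_i)\le\sum_{j\ne i}T(r,a_j)+\sum_{j\ne i}m\!\left(r,\frac{f(q^jz)}{f(q^iz)}\right)+O(1).
\]
The substitution $w=q^iz$ converts each shift quotient into a logarithmic $q$-difference $f(q^{j-i}w)/f(w)$ evaluated on the circle $|w|=|q|^ir$, so \eqref{ZT-estimate} gives $m(r,f(q^jz)/f(q^iz))=O((\log r)^{\rho_{\log}(f)-1+\varepsilon})$ for every $\varepsilon>0$. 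Choosing $\varepsilon$ with $\sigma+\varepsilon<\rho-\varepsilon$ and evaluating along a sequence $r_k\to\infty$ on which $T(r_k,a_i)\ge(\log r_k)^{\rho-\varepsilon}$, the remaining contribution $\sum_{j\ne i}T(r_k,a_j)=O((\log r_k)^{\sigma+\varepsilon})$ is absorbed by $T(r_k,a_i)$, leaving $(\log r_k)^{\rho-\varepsilon}\le C(\log r_k)^{\rho_{\log}(f)-1+\varepsilon}$; letting $\varepsilon\to 0^+$ forces $\rho_{\log}(f)\ge\rho+1$.

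For the reverse inequality I would use $|q|\ne 1$. Assume without loss of generality $|q|>1$ (otherwise solve for $f(z)$ instead) and rearrange the equation as
\[
f(q^nz)=-\sum_{j=0}^{n-1}\frac{a_j(z)}{a_n(z)}\,f(q^jz).
\]
Viewing $\vec f(z)=(f(z),f(qz),\ldots,f(q^{n-1}z))^{T}$ as the state of a first-order $q$-recursion $\vec f(qz)=M(z)\vec f(z)$ whose companion matrix has entries of logarithmic order at most $\rho$, the iterated relation $\vec f(q^Kz)=M(q^{K-1}z)\cdots M(z)\vec f(z)$ controls $f$ at radius $R=|q|^Kr_0$ by a product of $K$ factors, each contributing additively on the logarithmic scale an error of size $(\log|q|^jr_0)^{\rho+\varepsilon}$. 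The Riemann-sum identity
\[
\sum_{j=0}^{K-1}(\log|q|^jr_0)^{\rho+\varepsilon}\;\asymp\;(\log R)^{\rho+1+\varepsilon}
\]
then delivers $T(R,f)=O((\log R)^{\rho+1+\varepsilon})$, whence $\rho_{\log}(f)\le\rho+1$.

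The delicate point is this last upper bound. Applied carelessly, the Nevanlinna inequality $T(r,\sum g_j)\le\sum T(r,g_j)+\log n$ introduces a multiplicative factor $n$ at each iteration step; accumulated over $K\asymp\log R/\log|q|$ steps, this would yield only $T(R,f)=O(R^{\log n/\log|q|})$, i.e.\ finite classical order of $f$, instead of the desired finite logarithmic order. Ensuring that the $K$ contributions combine \emph{additively} on the logarithmic scale rather than multiplicatively is immediate in the scalar case $n=1$, where $f(qz)=-\tfrac{a_0}{a_1}f(z)$ gives the telescoping inequality $T(|q|r,f)-T(r,f)\le T(r,a_0/a_1)+O(1)=O((\log r)^{\rho+\varepsilon})$. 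For $n\ge 2$ the corresponding bookkeeping — carried out either via $\log\max_{|z|=r}|f(z)|$ in the entire case or, in the meromorphic case, by a bootstrap that re-invokes \eqref{ZT-estimate} with a preliminary finite-order bound already in hand — is the technical heart of the proof.
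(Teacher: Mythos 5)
Your lower bound is correct and follows the same route as the source: isolate the dominant coefficient, divide by $f(q^iz)$, apply the logarithmic $q$-difference estimate \eqref{ZT-estimate} to each quotient $f(q^{j-i}w)/f(w)$ at $|w|=|q^i|r$, and compare growth rates along an extremal sequence for $T(r,a_i)$. This is precisely the mechanism behind \cite[Lemma~5.1]{ZT} and Theorem~\ref{growth-coe} of the present paper.

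The upper bound, however, has a genuine gap, and you have named it but not closed it. The assertion that the companion matrix ``has entries of logarithmic order at most $\rho$'' does not by itself control the iterated product: the entries $a_j/a_n$ are \emph{meromorphic}, and what the iteration actually requires is a \emph{pointwise} upper bound on $|a_j(z)/a_n(z)|$, i.e.\ a lower bound on $|a_n(z)|$ (equivalently $|a_0(z)|$ after the change of variable $z\mapsto q^{-n}z$). The logarithmic order of $a_n$ gives no such lower bound near its zeros. The missing ingredient is the minimum-modulus estimate for canonical products of finite logarithmic (more generally $\varphi$-) order — Lemma~\ref{canonical} here, \cite[Lemma~3.3]{ZT} in the source — which shows that outside an $R$-set of discs around the zeros of the leading coefficient one has $\log(1/|a_0(z)|)=O\bigl((\log r)^{\lambda+\varepsilon}\log r\bigr)$ with $\lambda\le\rho-1$ the logarithmic exponent of convergence of those zeros. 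With that estimate in hand, one chooses $t\in(1,|q|)$ so that the circles $|z|=|q|^kt$ avoid the exceptional discs and carry no poles of $f$, and then runs the iteration on $M(|q|^kt,f)$ rather than on $T$: the multiplicative factor becomes an additive $O(\log k)$ in $\log M_k$, and the telescoping really does produce $\log M_k\lesssim k(\log|q|^kt)^{\rho+\varepsilon}\asymp(\log R)^{\rho+1+\varepsilon}$. Finally, since $f$ is allowed to be meromorphic, this only bounds $m(r,f)$; a separate bound on $N(r,f)$ — via a pole-propagation lemma such as Lemma~\ref{Hei} together with $\rho_{\log}(N)=\rho_{\log}(n)+1$ — is needed to conclude for $T(r,f)$. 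Your outline correctly flags the danger of the multiplicative blow-up and correctly senses that $\log M$ is the right quantity in the entire case, but without the canonical-product lower bound and the careful choice of circles the argument does not go through.
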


Among our main objectives is to find  $\varphi$-order analogues of the estimate \eqref{ZT-estimate} and of Theorem~\ref{geq.theorem}.
To this end, we globally assume that a non-decreasing function $s:(R_0,\infty)\to(0,\infty)$ satisfies
	\begin{equation}\label{assumption}
	r< s(r)\leq r^2,\quad r\geq R_0.
	\end{equation}
The function $s(r)$ takes the role of $\lambda$ in Lemma~\ref{BHMK.lemma}.
Suitable test functions for $\varphi(r)$ and $s(r)$ then are, for example,
	\begin{equation}\label{test-functions}
	\varphi(r)=\log^\alpha r,\quad \varphi(r)=\exp(\log^\beta r),\quad \varphi(r)=r^\beta,
	\end{equation}
along with $s(r)=r\log r$ and $s(r)=r^\alpha$, where $\alpha\in(1,2]$ and $\beta\in (0,1]$.

 Meromorphic functions of fast growth are already thoroughly investigated by the classical order. Thus, in this paper, we refer to a meromorphic function $f$ of finite $\varphi$-order as a slowly growing function $f$ in the sense that
    \begin{equation}\label{global-rho<1}
    \rho(f)<1.
    \end{equation}

This paper is organized as follows. In Section~\ref{results}, we state the results of $q$-difference equations for the $\varphi$-order.
We devote Section~\ref{varphi-calculus} to giving a $\,\,\,\,\,\,\,\,\,\,\,\,\,\,\,\,\,\,\,\,q$-difference analogue of Lemma~\ref{BHMK.lemma} in terms of the $\varphi$-order, which is one of the most important individual tools to prove Theorem~\ref{growth-coe} below. Section~\ref{varphi-calculus}  also contains a  
discussion on general growth parameters and growth properties related to the $\varphi$-order.
In Section~\ref{exp-conv-sec}, we introduce the $\varphi$-exponent of convergence and prove some properties related to it.
 Finally, the proofs of Theorems~\ref{growth-coe} and~\ref{rho f-rho a}  are given in Sections~\ref{proof of Th4.1} and~\ref{proof of Theorem 4.2}, respectively.

\section{Results on $q$-difference equations}\label{results}

We consider the growth of meromorphic solutions of  $q$-difference equations
	\begin{equation}\label{diffeqn}
	\sum_{j=0}^na_j(z)f(q^jz)=0
	\end{equation}
and of the corresponding non-homogeneous $q$-difference equations	
	\begin{equation}\label{diffeqn-non-homo}
	\sum_{j=0}^na_j(z)f(q^jz)=a_{n+1}(z),
	\end{equation}
where $a_0,\ldots,a_{n+1}$ are meromorphic functions. Moreover, we may assume  $a_0a_n\not\equiv 0$. The early results on these equations are reviewed in \cite{Chen}.

	Our results that follow depend on certain growth parameters defined as
		\begin{equation}\label{liminf}
	\alpha_{\varphi,s}=\liminf_{r\to\infty}\frac{\log \varphi(r)}{\log \varphi(s(r))},\quad
	\beta_\varphi=\limsup_{r\to\infty}\frac{\log\log r}{\log\varphi(r)}
	\end{equation}
and
	\begin{equation}\label{L}
	\gamma_{\varphi,s}=\liminf_{r\to\infty}\frac{\log\log \frac{s(r)}{r}}{\log \varphi(r)}.
	\end{equation}
Due to the assumptions \eqref{general-restriction} and \eqref{assumption},
we always have $ \alpha_{\varphi,s}\in[0,1]$, $\beta_\varphi\in[0,1]$ and $ \gamma_{\varphi,s}\in[-\infty,1]$.
From now on, we make a global assumption
    \begin{equation}\label{global-s/r}
    \liminf_{r\to\infty}\frac{s(r)}{r}>1,
    \end{equation}
which ensures that $ \gamma_{\varphi,s}\in[0,1]$. Further properties and relations among the growth parameters $\alpha_{\varphi,s},\beta_\varphi,\gamma_{\varphi,s}$ will be obtained in Section~\ref{general-parameters-sec}.

The first result on $q$-difference equations reduces to the logarithmic order case in \cite[Lemma~5.1]{ZT}
when  $s(r)=r^2$ and $\varphi(r)=\log r$.

\begin{theorem} \label{growth-coe}
Suppose that $a_0(z),\ldots,a_n(z)$ are meromorphic functions of finite
$\varphi$-order such that,  for some $i\in \{0,\ldots,n\}$,
	\begin{equation}\label{dominant-coeff}
	\rho_\varphi(a_i)>\max_{j\neq i}\{\rho_\varphi(a_j)\}.
	\end{equation}
Suppose that $\varphi(r)$ is subadditive. Let $\alpha_{\varphi,s}$ and $\gamma_{\varphi,s}$ be the constants in \eqref{liminf} and \eqref{L}, and let $q\in\C\setminus\{0\}$ and $\varepsilon>0$.
 \begin{itemize}
\item[\textnormal{(a)}]
Suppose that $\displaystyle\limsup_{r\to\infty}\frac{s(r)}{r}=\infty$ and that $s(r)$
is convex and differentiable. If $f$ is a meromorphic solution of \eqref{diffeqn}, then
	$$
	\rho_{\varphi}(f)\geq \max\left\{\rho_{\varphi}(f)-\alpha_{\varphi,s} \gamma_{\varphi,s},\, \alpha_{\varphi,s}\rho_{\varphi}(f)\right\}\geq \alpha_{\varphi,s} \rho_\varphi(a_i).
	$$
Moreover, if the coefficients $a_0(z),\ldots,a_n(z)$ are entire, then
    $$
    \rho_\varphi(f)\geq \alpha_{\varphi,s}\rho_\varphi(a_i)+\alpha_{\varphi,s}\gamma_{\varphi,s}.
    $$
 \item[\textnormal{(b)}]
 Suppose that $\displaystyle\limsup_{r\to\infty}\frac{s(r)}{r}<\infty$. If $f$ is a meromorphic solution of \eqref{diffeqn}, then
     $
 \rho_\varphi(f)\geq \rho_\varphi(a_i).
     $
\end{itemize}

\end{theorem}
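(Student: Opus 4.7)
The plan is to mirror the strategy of Theorem~\ref{geq.theorem}: isolate the dominant coefficient, and then control the logarithmic $q$-differences by a $\varphi$-order analogue of the $q$-shift lemma. From \eqref{diffeqn} one writes
$$-a_i(z)=\sum_{j\neq i}a_j(z)\,\frac{f(q^jz)}{f(q^iz)},$$
and taking proximity functions on both sides yields
$$m(r,a_i)\leq\sum_{j\neq i}m(r,a_j)+\sum_{j\neq i}m\!\left(r,\frac{f(q^jz)}{f(q^iz)}\right)+O(1).$$

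The key analytic input is the $\varphi$-order analogue of Lemma~\ref{BHMK.lemma} to be developed in Section~\ref{varphi-calculus} with the choice $\lambda=s(r)$. After writing each quotient $f(q^jz)/f(q^iz)$ as a telescoping product of single $q$-shifts, using \eqref{global-s/r} to bound the denominators $s(r)-|q|r$ and $s(r)-r$ by a constant multiple of $s(r)$, and translating $T(s(r),f)$ back to the $\varphi(r)$-scale via the defining inequalities of $\alpha_{\varphi,s}$ and $\gamma_{\varphi,s}$, I expect an estimate of the form
$$m\!\left(r,\frac{f(q^kz)}{f(z)}\right)=O\!\left(\varphi(r)^{\rho_\varphi(f)/\alpha_{\varphi,s}-\gamma_{\varphi,s}+\veps}\right),\qquad k\in\Z,$$
possibly outside an exceptional set of finite logarithmic measure. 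In case (a) the assumption $\limsup s(r)/r=\infty$ is what activates the $-\gamma_{\varphi,s}$ gain coming from the factor $r/s(r)$ inside Lemma~\ref{BHMK.lemma}, while convexity and differentiability of $s$ are used to bound the counting-function contribution $n(s(r),f)+n(s(r),1/f)$ by $T(s(r),f)$ via an integration-by-parts argument.

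Once this proximity bound is in place, the dominance condition \eqref{dominant-coeff} completes the argument. Comparing $\varphi$-orders in the isolated inequality and noting that the $a_j$-terms with $j\neq i$ are negligible, one obtains $\rho_\varphi(m(r,a_i))\leq\rho_\varphi(f)/\alpha_{\varphi,s}-\gamma_{\varphi,s}$. For entire coefficients $T(r,a_i)=m(r,a_i)$, so this yields directly the sharper inequality $\rho_\varphi(f)\geq\alpha_{\varphi,s}\rho_\varphi(a_i)+\alpha_{\varphi,s}\gamma_{\varphi,s}$ asserted in the second half of (a). In the meromorphic case, passing from $m$ to $T$ is no longer free, but a parallel argument using full characteristic functions together with the standard bound $T(r,f(cz))=T(|c|r,f)$ (whose effect on $\varphi(r)$ is absorbed by subadditivity) yields the complementary inequality $\rho_\varphi(f)\geq\rho_\varphi(a_i)$, which is equivalent to $\alpha_{\varphi,s}\rho_\varphi(f)\geq\alpha_{\varphi,s}\rho_\varphi(a_i)$; taken together these two inequalities deliver the $\max$-form of the bound. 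Part (b) is the easier case: $\limsup s(r)/r<\infty$ together with subadditivity of $\varphi$ forces $\alpha_{\varphi,s}=1$ and $\gamma_{\varphi,s}=0$, and the characteristic-function argument alone suffices. The principal obstacle will be the book-keeping in the proximity estimate, namely translating the various rational-in-$(r,s(r),q)$ factors and counting-function terms of Lemma~\ref{BHMK.lemma} into a single clean exponent of $\varphi(r)$, and ensuring that the exceptional set does not block the selection of a radial sequence realizing $\rho_\varphi(a_i)$.
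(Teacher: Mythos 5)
Your proposal mirrors the paper's proof: divide by $f(q^iz)$, apply the $\varphi$-order $q$-difference analogue (Lemma~\ref{q-difference-lemma}) of Lemma~\ref{BHMK.lemma}, translate from the $\varphi(s(r))$-scale to the $\varphi(r)$-scale using the defining inequalities of $\alpha_{\varphi,s}$ and $\gamma_{\varphi,s}$, and close the argument with proximity functions for entire coefficients and with characteristic (equivalently, proximity plus counting) functions for meromorphic ones. The one hedge to drop is the exceptional set: Lemma~\ref{q-difference-lemma} holds for all $r\geq R_0$, so selecting a radial sequence realizing $\rho_\varphi(a_i)$ poses no difficulty, and the $-\gamma_{\varphi,s}$ gain comes from the $1/\log\frac{s(r)}{r}$ factor in that lemma rather than from $r/s(r)$ directly.
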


The second result on $q$-difference equations reduces to the logarithmic order case in \cite[Lemma~5.3]{ZT} when choosing $s(r)=r^2$ and $\varphi(r)=\log r$, and to the classical order case in \cite[Theorem~3.5] {Heittokangas} when choosing $s(r)=2r$ and $\varphi(r)=r$. However, due to the global growth restriction \eqref{global-rho<1}, our result has only a partial overlap with \cite[Theorem~3.5] {Heittokangas}. This does not seem to be a big disadvantage since the growth of meromorphic functions $f$ of order $1\leq \rho(f)<\infty$ can be adequately expressed by using the classical order.

\begin{theorem}\label{rho f-rho a}
Let $a_0(z),\ldots,a_{n+1}(z)$ be meromorphic functions of finite 
\mbox{$\varphi$-order} and at least one of them is non-constant. Denote
    \begin{equation}\label{rho-varphi-coe}
	\rho_\varphi=\max_{0\leq j\leq n+1}\{\rho_\varphi(a_j)\}.
	\end{equation}
Suppose  $s(r)$ is convex and differentiable such that
    \begin{equation}\label{Young-condition}
    \limsup_{r\to\infty}\frac{r^2s'(r)}{s(r)^2}<\infty
    \end{equation}
and that $\varphi(r)$ is subadditive and differentiable and that one of the following holds:
    \begin{align}
 	\limsup_{r\to\infty}\frac{\varphi'(s(r))s'(r)r}{\varphi(s(r))}
 	&< \frac{1}{\lambda}, \label{assumption-limsup}\\
    \liminf_{r\to\infty}\frac{\varphi'(s(r))s'(r)r}{\varphi(s(r))}
  	&\geq \frac{1}{\lambda},   \label{assumption-limsup<liminf}
    \end{align}
     where $\lambda$ is the $\varphi$-exponent of convergence of zeros of $a_0$ to be defined in Section~\ref{exp-conv-sec}. Let $\alpha_{\varphi,s}>0$, $\beta_\varphi$  and $\gamma_{\varphi,s}$ be the constants in \eqref{liminf} and \eqref{L}, and let $q\in\C\setminus\{0\}$, $|q|\neq 1$.
If $f$ is a meromorphic solution of \eqref{diffeqn-non-homo}, then
     \begin{equation}\label{L-alpha-beta}
     \rho_\varphi(f)\leq \frac{\rho_\varphi}{\alpha_{\varphi,s}^2}
     -\frac{\gamma_{\varphi,s}}{\alpha_{\varphi,s}}+2\beta_\varphi.
     \end{equation}
  If all of $a_0(z),\ldots,a_{n+1}(z)$ are constants, then $\rho_\varphi(f)\leq \beta_\varphi$.
\end{theorem}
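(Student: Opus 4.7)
The plan is to derive a self-similar inequality for $T(r,f)$ from \eqref{diffeqn-non-homo}, then iterate it and read off the $\varphi$-order. First, dividing the equation by $f(z)$ and isolating $a_0$ gives
$$a_0(z)=\frac{a_{n+1}(z)}{f(z)}-\sum_{j=1}^{n}a_j(z)\frac{f(q^jz)}{f(z)},$$
so that $m(r,a_0)$ is controlled by $m(r,1/f)$, $\sum_j m(r,a_j)$, and $\sum_j m(r,f(q^jz)/f(z))$ up to $O(1)$. Applying the first fundamental theorem to replace $m(r,1/f)$ by $T(r,f)-N(r,1/f)$ and rearranging yields an upper bound on $T(r,f)$ in terms of $N(r,1/f)$, the Nevanlinna characteristics $T(r,a_j)$, and the proximity terms $m(r,f(q^jz)/f(z))$. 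The zero count $N(r,1/f)$ is, via the equation itself together with the $\varphi$-exponent of convergence $\lambda$ of the zeros of $a_0$ developed in Section~\ref{exp-conv-sec}, controlled by a term of order $\varphi(r)^{\lambda+\varepsilon}$.

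For each proximity term $m(r,f(q^jz)/f(z))$, I would then apply the $\varphi$-order strengthening of the $q$-difference logarithmic derivative estimate developed in Section~\ref{varphi-calculus} (refining Lemma~\ref{BHMK.lemma}), taking the free parameter there as $s(r)$, a choice legitimized by \eqref{assumption}. The hypothesis \eqref{Young-condition} together with the differentiability and subadditivity of $\varphi$ are exactly what is needed to bound the kernel factors of Lemma~\ref{BHMK.lemma}, namely $|q-1|r/(s(r)-|q|r)$, $|q-1|r/(s(r)-r)$ and $r\,s(r)/((s(r)-r)(s(r)-|q|r))$, in terms of $\varphi(s(r))$ and the rescaling ratio $s(r)/r$. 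The outcome is an upper bound for $m(r,f(q^jz)/f(z))$ of schematic form $\varphi(s(r))^{\rho_\varphi(f)+\varepsilon}$ times logarithmic corrections involving $\log(s(r)/r)$, valid outside an exceptional set of finite logarithmic measure. Injecting this into the inequality from the first paragraph yields the desired self-similar bound for $T(r,f)$ in terms of $\varphi(s(r))$, $\varphi(r)$ and the coefficients.

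Taking logarithms, dividing by $\log\varphi(r)$, letting $r\to\infty$ along a dense subsequence, and substituting the definitions \eqref{liminf} and \eqref{L} translates each scale on the right-hand side into one of the terms on the right-hand side of \eqref{L-alpha-beta}: the factor $\alpha_{\varphi,s}^{-2}$ originates from the two nested rescalings $r\mapsto s(r)\mapsto s(s(r))$ needed to close the iteration, the term $-\gamma_{\varphi,s}/\alpha_{\varphi,s}$ from the $\log(s(r)/r)$ correction, and the $2\beta_\varphi$ from the logarithmic error terms in Lemma~\ref{BHMK.lemma}. The distinction between the hypotheses \eqref{assumption-limsup} and \eqref{assumption-limsup<liminf} governs exactly when the $\lambda$-contribution from $N(r,1/a_0)$ is absorbed directly into the $\rho_\varphi$-dominant term and when it requires one further rescaling before being absorbed. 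The main obstacle I expect is the bookkeeping at this last step: verifying that both regimes \eqref{assumption-limsup} and \eqref{assumption-limsup<liminf} lead to the same clean bound \eqref{L-alpha-beta} with the sharp constants $\alpha_{\varphi,s}^{-2}$, $-\gamma_{\varphi,s}/\alpha_{\varphi,s}$ and $2\beta_\varphi$. The constant-coefficient case is immediate: such an equation forces $T(r,f)=O(\log r)$, whence $\rho_\varphi(f)\leq\beta_\varphi$ by \eqref{liminf}.
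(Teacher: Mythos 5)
Your proposal follows a genuinely different route from the paper's, and that route has gaps that I do not believe can be repaired as described.

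The paper proves \eqref{L-alpha-beta} by a pointwise, maximum-modulus argument, not a Nevanlinna one. It splits $T(r,f)=m(r,f)+N(r,f)$. The pole term $N(r,f)$ is handled by Lemma~\ref{Hei}, which bounds $n(r,f)$ by the pole-counts of the $a_j$ and the zero-count of $a_0$, times $\log r$. The proximity term $m(r,f)$ is handled by dividing \eqref{diffeqn-non-homo} by $a_0$, bounding $1/|a_0(z)|$ from below via Lemma~\ref{canonical} (the canonical-product estimate outside an $R$-set), bounding $M(r,a_j)$ from above via \eqref{varphi-logM}, and then iterating the resulting inequality over the circles $|z|=|p|^kt$ with $p=1/q$, $0<|q|<1$; the factor $k=O(\log r)$ from the iteration is where one of the two $\beta_\varphi$'s comes from, and $J_0=\rho_\varphi/\alpha_{\varphi,s}^2-\gamma_{\varphi,s}/\alpha_{\varphi,s}$ arises from converting the $\varphi(s(r))$-scale of Lemma~\ref{canonical} into the $\varphi(r)$-scale. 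Lemma~\ref{q-difference-lemma} is not used in this proof at all; it is the tool for Theorem~\ref{growth-coe}.

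There are three concrete problems with your plan. First, after dividing by $f$ (or $a_{n+1}$) and invoking Lemma~\ref{q-difference-lemma}, the self-similar inequality you obtain has the form
$$T(r,f)\leq O\!\big(\varphi(r)^{\rho_\varphi+\varepsilon}\big)+O\!\left(\frac{\varphi(s(r))^{\rho_\varphi(f)+\varepsilon}}{\log\frac{s(r)}{r}}\right)+N\!\left(r,\tfrac1f\right)+O(1),$$
and the middle term, converted to the $\varphi(r)$-scale, is $O\big(\varphi(r)^{\rho_\varphi(f)/\alpha_{\varphi,s}-\gamma_{\varphi,s}+\varepsilon'}\big)$. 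Since $1/\alpha_{\varphi,s}\geq 1$ and $\gamma_{\varphi,s}\leq 1$, this exponent is in general at least $\rho_\varphi(f)$, so taking $\log/\log\varphi(r)$ yields nothing better than $\rho_\varphi(f)\leq\rho_\varphi(f)$; the iteration does not close. This is precisely why the paper abandons Nevanlinna proximity functions and switches to maximum modulus on discrete circles, where the $q$-rescaling structure allows an honest induction. Second, your assertion that $N(r,1/f)$ is controlled by the $\varphi$-exponent of convergence $\lambda$ of the zeros of $a_0$ is not justified: the $q$-difference equation propagates poles of $f$ (which is what Lemma~\ref{Hei} exploits), but imposes no constraint on where $f$ itself vanishes. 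The paper never needs to control zeros of $f$, since it works with $M(r,f)$ rather than $T(r,1/f)$. Third, the hypotheses \eqref{Young-condition}, \eqref{assumption-limsup} and \eqref{assumption-limsup<liminf} are not used to bound the kernel factors of Lemma~\ref{BHMK.lemma} — those are already handled inside Lemma~\ref{q-difference-lemma} without any such assumptions. They enter only in Lemma~\ref{canonical}, where they control the convergence of $\int_{s(r)}^\infty n(t)t^{-2}\,dt$ relative to $\varphi(s(r))^{\lambda+\varepsilon}/r$. Consequently the attribution of $\alpha_{\varphi,s}^{-2}$, $-\gamma_{\varphi,s}/\alpha_{\varphi,s}$ and $2\beta_\varphi$ to specific sources in your sketch does not correspond to where these constants actually arise, and the last paragraph's description of the role of \eqref{assumption-limsup} versus \eqref{assumption-limsup<liminf} is off. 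The constant-coefficient case, on the other hand, you have right.
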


\begin{remark}
(a)\,Although there are many conditions in Theorem~\ref{rho f-rho a},  the result still makes sense since there are plenty of functions $\varphi(r)$ and $s(r)$ satisfying  \eqref{Young-condition}-\eqref{assumption-limsup<liminf}. Examples of such functions are the test functions $\varphi(r)$ in \eqref{test-functions} along with $s(r)=r\log r$ and $s(r)=r^\alpha$, where $\alpha\in(1,2]$ and $\beta\in (0,1]$.
\vskip 2mm

(b)\,If $\limsup_{r\to\infty}\frac{s(r)}{r}<\infty$, then \eqref{L-alpha-beta} reduces to
 $\rho_\varphi(f)\leq \rho_\varphi+2\beta_\varphi$ by Remark~\ref{gamma_s<0}(a) below. If $\gamma_{\varphi,s}=1$, then \eqref{L-alpha-beta} reduces to
$\rho_\varphi(f)\leq \rho_\varphi+1$ by Lemma~\ref{parameter-lemma} below.
\end{remark}

We have the following corollary from Remark~\ref{ilpo}(b) and Theorems~\ref{growth-coe}--\ref{rho f-rho a}.
\begin{corollary}
Let $a_0(z),\ldots,a_n(z)$ be entire functions of finite $\varphi$-order with \eqref{dominant-coeff}. Suppose that $\varphi(r)$ is subadditive and differentiable such that the limits
    \begin{equation}\label{3-limits}
    \lim_{r\to\infty}\frac{\log \varphi(r)}{\log \varphi(r^2)},
    \quad
    \lim_{r\to\infty}\frac{\log\log r}{\log\varphi(r)}
    \quad
    \text{and}\quad\lim _{r\to\infty}\frac{\varphi'(r)r}{\varphi(r)}
    \end{equation} 
    exist, and suppose that $s(r)$ is convex and differentiable satisfying \eqref{Young-condition}. Let $\beta_\varphi$  be the constant in \eqref{liminf}. If $f$ is a meromorphic solution of \eqref{diffeqn}, then
  $$
  \rho_\varphi(f)=\rho_\varphi(a_i)+\beta_\varphi.
  $$
\end{corollary}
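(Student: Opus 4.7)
The plan is to sandwich $\rho_\varphi(f)$ between the lower bound from Theorem~\ref{growth-coe} and the upper bound from Theorem~\ref{rho f-rho a}, and then to collapse the sandwich using the regularity provided by the three existing limits in \eqref{3-limits}.

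First, since the $a_j$ are entire and \eqref{dominant-coeff} holds, the entire case of Theorem~\ref{growth-coe}(a) applies and produces
$$
\rho_\varphi(f)\geq \alpha_{\varphi,s}\rho_\varphi(a_i)+\alpha_{\varphi,s}\gamma_{\varphi,s}.
$$
If $\limsup_{r\to\infty} s(r)/r<\infty$ instead, part (b) of the same theorem gives the simpler bound $\rho_\varphi(f)\geq \rho_\varphi(a_i)$; combined with Remark~\ref{gamma_s<0}(a) and the parallel collapse of the upper bound noted in Remark~2.2(a), this case reduces to the same conclusion.

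Second, from \eqref{dominant-coeff} and \eqref{rho-varphi-coe} we have $\rho_\varphi=\rho_\varphi(a_i)$. To apply Theorem~\ref{rho f-rho a} we need either \eqref{assumption-limsup} or \eqref{assumption-limsup<liminf} to hold: the existence of $\lim_{r\to\infty}\varphi'(r)r/\varphi(r)$ from \eqref{3-limits}, combined with the convexity of $s$ and \eqref{Young-condition}, pins down the asymptotic behaviour of $\varphi'(s(r))s'(r)r/\varphi(s(r))$ so that one of the two alternatives is automatically satisfied. The theorem then delivers
$$
\rho_\varphi(f)\leq \frac{\rho_\varphi(a_i)}{\alpha_{\varphi,s}^{2}}-\frac{\gamma_{\varphi,s}}{\alpha_{\varphi,s}}+2\beta_\varphi.
$$

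The crucial step is to extract from Remark~\ref{ilpo}(b) --- whose hypotheses are exactly the existence of the three limits in \eqref{3-limits} --- the identifications
$$
\alpha_{\varphi,s}=1\qquad\text{and}\qquad \gamma_{\varphi,s}=\beta_\varphi.
$$
Substituting these into the two sandwich bounds gives
$$
\rho_\varphi(a_i)+\beta_\varphi\leq \rho_\varphi(f)\leq \rho_\varphi(a_i)-\beta_\varphi+2\beta_\varphi= \rho_\varphi(a_i)+\beta_\varphi,
$$
which is the claimed equality.

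The main obstacle is establishing $\alpha_{\varphi,s}=1$ and $\gamma_{\varphi,s}=\beta_\varphi$ from \eqref{3-limits}. The intuition is that the existence of $\lim \varphi'(r)r/\varphi(r)$ forces $\varphi$ to be regularly varying in the sense of Karamata, so that $\log\varphi(r)\sim \log\varphi(s(r))$ as $r\to\infty$ whenever $s$ satisfies \eqref{Young-condition}; this yields $\alpha_{\varphi,s}=1$, consistent with the first limit in \eqref{3-limits}. Once $\alpha_{\varphi,s}=1$ is in hand, the existence of the second limit in \eqref{3-limits} promotes the defining $\liminf$ of $\gamma_{\varphi,s}$ to a genuine limit that coincides with $\beta_\varphi$. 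All of this regularity bookkeeping is encapsulated in the referenced Remark~\ref{ilpo}(b) and Lemma~\ref{parameter-lemma}; once those identifications are available, the corollary itself is a one-line algebraic reduction of the sandwich.
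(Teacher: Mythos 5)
The high-level sandwich structure is the paper's approach: pin $\rho_\varphi(f)$ between the lower bound of Theorem~\ref{growth-coe} and the upper bound of Theorem~\ref{rho f-rho a}, then collapse by showing $\alpha_{\varphi,s}=1$ and $\gamma_{\varphi,s}=\beta_\varphi$. That part is fine. The gap is in the justification of the two identities. You assert that the existence of $\lim_{r\to\infty}\varphi'(r)r/\varphi(r)$ makes $\varphi$ regularly varying, hence $\log\varphi(r)\sim\log\varphi(s(r))$ ``whenever $s$ satisfies \eqref{Young-condition}'', from which $\alpha_{\varphi,s}=1$. This is false for arbitrary such $s$. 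Take $\varphi(r)=r^\nu$ with $0<\nu<1$: all three limits in \eqref{3-limits} exist (with $\beta_\varphi=0$ and $\lim\varphi'(r)r/\varphi(r)=\nu$), yet for $s(r)=r^2$ (which satisfies \eqref{Young-condition}) one has $\log\varphi(r)/\log\varphi(s(r))=1/2$ identically, so $\alpha_{\varphi,s}=1/2$, not $1$, and the sandwich does not close. Similarly $\gamma_{\varphi,s}$ depends on $s$: for $s(r)=2r$ it is $0$, for $s(r)=r\log r$ it is $\liminf\log\log\log r/\log\varphi(r)$, and only for $s(r)=r^2$ does it equal $\beta_\varphi$ once \eqref{lim-gamma=beta} exists.

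Because the conclusion $\rho_\varphi(f)=\rho_\varphi(a_i)+\beta_\varphi$ does not depend on $s$, the corollary is proved by \emph{selecting} $s$ appropriately rather than by arguing for a generic $s$. The paper does exactly this, citing Remark~\ref{ilpo}(b): take $s(r)=2r$ when $\beta_\varphi=0$, so that Remark~\ref{gamma_s<0}(a) gives $\alpha_{\varphi,s}=1$ and $\gamma_{\varphi,s}=0=\beta_\varphi$; take $s(r)=r^2$ when $\beta_\varphi>0$, so that Lemma~\ref{beta.lemma} gives $\alpha_{\varphi,s}=1$ and the existence of \eqref{lim-gamma=beta} gives $\gamma_{\varphi,s}=\beta_\varphi$. (The note printed after the corollary appears to have the two choices swapped relative to Remark~\ref{ilpo}(b); the version just described is the consistent one.) Your algebra after the identities is correct, but without this case split and the choice of $s$ the identities are not established, so the central step of the proof is not justified.
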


  In the proof, one should choose $s(r)=r^2$ if $\beta_\varphi=0$, while $s(r)=2r$ if $\beta_\varphi>0$. Moreover, the existence of the limit $\lim _{r\to\infty}\frac{\varphi'(r)r}{\varphi(r)}$ implies the existence of the limit  $\lim _{r\to\infty}\frac{\varphi'(s(r))s'(r)r}{\varphi(s(r))}$. It is easy to check that the limits in \eqref{3-limits} exist for all test functions $\varphi(r)$ in \eqref{test-functions}.

\section{Growth results on the $\varphi$-order}\label{varphi-calculus}

We begin this section by stating and proving a $q$-difference analogue of the lemma on the logarithmic derivative for the $\varphi$-order,
which is one of our most important auxiliary results. We then proceed to discuss general growth parameters and
growth properties  related to the $\varphi$-order.

\par
We use the notation $r\geq R_0$ to express that the associated inequality is valid ''for all
$r$ large enough''. The notation $g(r)\lesssim h(r)$ means that there exists a constant $C\geq 1$
such that $g(r)\leq Ch(r)$ for all $r\geq R_0$. The notation $g(r)\asymp h(r)$ means that $g(r)\lesssim h(r)$ and $h(r)\lesssim g(r)$ hold simultaneously.


\subsection{Lemma on the logarithmic $q$-difference}\label{log-difference-sec}

The following lemma  reduces to \cite[Theorem~2.2]{ZT} when choosing $s(r)=r^2$ and $\varphi(r)=\log r$.
Certain auxiliary functions such as $u(r)$ and $v(r)$ are constructed in the proof, which are not needed in proving
\cite[Theorem~2.2]{ZT}.
	
\begin{lemma}\label{q-difference-lemma}
Let $f$ be a meromorphic function of finite $\varphi$-order
$\rho_\varphi(f)$, and let $\varepsilon>0$ and $q\in\C\setminus\{0\}$.
\begin{itemize}
\item[\textnormal{(a)}] If $\displaystyle\limsup_{r\to\infty}\frac{s(r)}{r}=\infty$ and if $s(r)$
is convex and differentiable, then
	\begin{equation*}
	m\left(r,\frac{f(qz)}{f(z)}\right)
	=O\left(\frac{\varphi(s(r))^{\rho_\varphi(f)+\varepsilon}}{\log\frac{s(r)}{r}}\right).
    \end{equation*}
\item[\textnormal{(b)}] If $\displaystyle\limsup_{r\to\infty}\frac{s(r)}{r}<\infty$ and if
$\varphi(r)$ is subadditive, then
    \begin{equation*}
	m\left(r,\frac{f(qz)}{f(z)}\right)=O\left(\varphi(r)^{\rho_\varphi(f)+\varepsilon}\right).
    \end{equation*}
\end{itemize}
\end{lemma}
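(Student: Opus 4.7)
The plan is to invoke Lemma~\ref{BHMK.lemma} with a carefully chosen parameter $\lambda=\lambda(r)$ and then estimate each ingredient via the definition of $\rho_\varphi(f)$. A standard preliminary factoring $f(z)=z^{k}g(z)$ with $g(0)\neq 0,\infty$ changes $m(r,f(qz)/f(z))$ only by $O(1)$ and leaves $\rho_\varphi(f)$ unchanged, so I may assume $f(0)\neq 0,\infty$ outright. I then fix $\delta=1/2$ so that the leading prefactor in Lemma~\ref{BHMK.lemma} becomes an absolute constant depending only on $q$.

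For part (a), convexity and differentiability of $s$ upgrade the hypothesis $\limsup_{r\to\infty} s(r)/r=\infty$ to $s(r)/r\to\infty$. Rather than taking $\lambda=s(r)$ directly, I would introduce the intermediate scale $u(r):=\sqrt{r\,s(r)}$ (possibly paired with a second auxiliary $v(r)$ for minor monotonicity bookkeeping), so that $u(r)/r=s(r)/u(r)=\sqrt{s(r)/r}\to\infty$. Applying Lemma~\ref{BHMK.lemma} with $\lambda=u(r)$, the three fractional factors
$$\frac{|q-1|r}{u-r},\quad\frac{|q-1|r}{u-|q|r},\quad\frac{4|q-1|ru}{(u-r)(u-|q|r)}$$
are all $O(\sqrt{r/s(r)})$. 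Standard Nevanlinna reasoning gives
$$N(s(r),f)\geq \int_{u(r)}^{s(r)}\frac{n(t,f)}{t}\,dt\geq n(u(r),f)\log\frac{s(r)}{u(r)}=\tfrac{1}{2}\,n(u(r),f)\log\frac{s(r)}{r},$$
and analogously for $1/f$; since $N(s(r),f)\leq T(s(r),f)\leq \varphi(s(r))^{\rho_\varphi(f)+\varepsilon/2}$ for $r$ large, the counting terms contribute $O(\varphi(s(r))^{\rho_\varphi(f)+\varepsilon/2}/\log(s(r)/r))$. The characteristic contribution is of order $\sqrt{r/s(r)}\,\varphi(s(r))^{\rho_\varphi(f)+\varepsilon/2}$, and since $\sqrt{r/s(r)}\log(s(r)/r)\to 0$ this piece is absorbed into the counting contribution; enlarging $\varepsilon/2$ back to $\varepsilon$ finishes the estimate.

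For part (b), $\liminf_{r\to\infty}s(r)/r>1$ combined with $\limsup_{r\to\infty} s(r)/r<\infty$ gives $s(r)\asymp r$. I would pick a fixed constant $M$ with $M\cdot\liminf_{r\to\infty}s(r)/r>|q|$ and apply Lemma~\ref{BHMK.lemma} with $\lambda=Ms(r)$, so that $\lambda>|q|r$ eventually and every fractional factor becomes merely $O(1)$. The pole and zero counts are dominated by $T(e\lambda,f)$ via $n\leq N\leq T$ applied at slightly enlarged arguments, and subadditivity of $\varphi$ yields $\varphi(ks(r))\lesssim\varphi(s(r))\asymp\varphi(r)$ for any fixed constant $k$, so both $T(e\lambda,f)$ and $T(\lambda,f)$ are at most $\varphi(r)^{\rho_\varphi(f)+\varepsilon}$ for $r$ large, producing the stated bound.

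The main obstacle is in part (a): a naive choice $\lambda=s(r)$ would force one to control $n(s(r),f)$ via $N$ at a strictly larger argument, and without subadditivity of $\varphi$ no comparison of $\varphi$ at such an enlarged argument with $\varphi(s(r))$ is available. Interposing the scale $u(r)=\sqrt{r\,s(r)}$ is exactly the device that simultaneously makes the characteristic factor small and generates the denominator $\log(s(r)/r)$ from the Jensen-type inequality, explaining why $s$ must be convex and differentiable in that case.
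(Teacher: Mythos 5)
Your proof is correct, and for part (a) it takes a genuinely cleaner route than the paper. Both proofs apply Lemma~\ref{BHMK.lemma} at an intermediate radius $\lambda=u(r)$ with $r<u(r)<s(r)$, and both bound the counting term by $T(s(r),f)/\log(s(r)/r)$ using the standard inequality $n(u(r),f)\log(s(r)/u(r))\leq N(s(r),f)$. The difference is in the choice of $u$. The paper constructs $u$ and an auxiliary $v$ as piecewise step functions built from $s(r)=rh(r)$, proves $s(r)\leq 2s(r-1)$ via the mean value theorem (this is where differentiability of $s$ enters), and then checks four bookkeeping properties; the auxiliary $v$ is only needed because their $u\approx r\log h(r)$ does not give a clean formula for $\log(s(r)/u(r))$. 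Your $u(r)=\sqrt{r\,s(r)}$ has $\log(s(r)/u(r))=\tfrac12\log(s(r)/r)$ and $u(r)/r=\sqrt{s(r)/r}$ in closed form, which removes the need for $v$, for the inequality $s(r)\leq 2s(r-1)$, and effectively for the differentiability hypothesis on $s$ (only convexity is needed, to upgrade $\limsup s(r)/r=\infty$ to an honest limit). The absorption of the characteristic term into the counting bound via $\sqrt{r/s(r)}\,\log(s(r)/r)\to 0$ is exactly right, and the preliminary reduction to $f(0)\neq 0,\infty$ and the choice $\delta=1/2$ match the paper.

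For part (b), the idea is again the same as the paper's (take $\lambda\asymp r$, invoke subadditivity to compare $\varphi$ at a fixed multiple of $r$ with $\varphi(r)$; the paper uses $\lambda=Br$ with $B=2\max\{|q|,C\}$ and the dilation factor $2$, you use $\lambda=Ms(r)$ and the dilation factor $e$). There is one small slip: the stated condition $M\cdot\liminf_{r\to\infty}s(r)/r>|q|$ guarantees $\lambda>|q|r$ eventually but not $\lambda>r$ when $|q|<1$; you need $M\cdot\liminf_{r\to\infty}s(r)/r>\max\{1,|q|\}$ (or simply $M\geq 1$ in addition), since Lemma~\ref{BHMK.lemma} requires $\lambda>\max\{r,|q|r\}$. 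With that trivial adjustment, part (b) is complete.
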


\begin{proof}
We first suppose that $f(0)\neq 0,\infty$.

(a) It follows from the assumptions on $s(r)$ that
	\begin{equation}\label{limit=infinity}
	\lim_{r\to\infty}\frac{s(r)}{r}=\infty.
	\end{equation}	
Indeed, suppose on the contrary to this claim that there exists a sequence $\{r_n\}$ of positive real
numbers tending to infinity and a constant $C\in (1,\infty)$ independent of $n$ such that
$s(r_n)\leq Cr_n$ for all
$n\in\N$. Choose $r\in [r_n,r_{n+1}]$, where $n\in\N$ is arbitrary but fixed. Then
$r=(1-t)r_n+tr_{n+1}$ for some $t\in [0,1]$. By the convexity of $s(r)$,
	$$
	s((1-t)r_n+tr_{n+1})\leq ts(r_{n+1})+(1-t)s(r_n)\leq C(tr_{n+1}+(1-t)r_n),
	$$
or in other words, $s(r)\leq Cr$ for all $r\in [r_n,r_{n+1}]$. Since $n\in\N$ is arbitrary and
$C$ is independent of $n$, we have $s(r)\leq Cr$ for all $r\geq r_1$, which is a contradiction.
This proves \eqref{limit=infinity}.

Next we show that $s(r)\leq 2s(r-1)$ for all $r\geq R_0$. Suppose on the contrary to this
claim that there exists an infinite sequence $\{t_n\}$ of positive real numbers tending to
infinity such that $s(t_n)>2s(t_n-1)$ for all $n\in\N$. By the mean value theorem and 
by convexity, for every $n$ large enough there exists a constant $c_n\in (t_n-1,t_n)$ such that
	\begin{equation}\label{squeeze}
	s(t_n-1)<s(t_n)-s(t_n-1)=s'(c_n)\leq s'(t_n).
	\end{equation}
By making use of \eqref{limit=infinity}, \eqref{squeeze}, L'H$\hat{\text{o}}$pital's rule
and the squeeze theorem for divergent sequences, we conclude that
    $$
    \lim_{n\to\infty}\frac{s(t_n)}{t_n^2}=\lim_{n\to \infty}\frac{s'(t_n)}{2t_n}
    =\lim_{n\to \infty}\frac{s'(t_n)}{t_n-1}\frac{t_n-1}{2t_n}
    =\frac12\lim_{n\to \infty}\frac{s'(t_n)}{t_n-1}=\infty,
    $$
which violates our assumption $s(r)\leq r^2$.

We proceed to prove that there exist non-decreasing functions 
$u,v:[1,\infty)\to (0,\infty)$ with the following properties:
\begin{itemize}
\item[(1)] $r<u(r)<s(r)$ and $r<v(r)<s(r)$ for all $r\geq R_0$,
\item[(2)] $u(r)/r\to\infty$ and $v(r)/r\to\infty$ as $r\to\infty$,
\item[(3)] $2^{-1}s(r)\leq v(u(r))\leq s(r)$ for all $r\geq R_0$,
\item[(4)] $2\log (u(r)/r)\leq \log (s(r)/r)\leq 2u(r)/r$ for all $r\geq R_0$.
\end{itemize}
For example, if $s(r)=r^2$ as in the proof of \cite[Theorem~2.2]{ZT}, then we may choose $u(r)=r^{4/3}$
and $v(r)=r^{3/2}$.
In the general case, denote $s(r)=rh(r)$, where $h(r)$ is an increasing and unbounded function,
and define the step functions
	\begin{equation*}
	w(r) = s(n),\quad u(r) = n\log h(n),\quad r\in [n,n+1).
	\end{equation*}
Then $w(r)$ and $u(r)$ are left-endpoint approximations for $s(r)$ and $r\log h(r)$, respectively, on the
intervals $[n,n+1)$. Define
	$$
	v(r)=\frac{h(n)}{\log h(n)}r,\quad r\in\left[n\log h(n),(n+1)\log h(n+1)\right).
	$$
 It is immediate from the definitions of the functions $u(r)$ and $v(r)$ that
	$$
	v(u(r))=\frac{h(n)}{\log h(n)}u(r)=nh(n)=w(r),\quad r\in [n,n+1).
	$$
Moreover, $u(r)$ and $v(r)$ are non-decreasing
and satisfy (2) as well as the first property in (1). Since $r<u(r)$ and since $v(r)$ is
increasing, we get $r<v(r)<v(u(r))=w(r)\leq s(r)$. This verifies the second
property in (1).

By the inequality $s(r)\leq 2s(r-1)$ and by the definition of the
function $w(r)$, we have $s(r)\geq w(r)\geq s(r-1)\geq 2^{-1}s(r)$ for all $r\geq R_0$. So (3) is confirmed. Since $\log^2x\leq x$ for all $x$ large enough, we obtain
	\begin{equation*}
	2\log (u(r)/r) \leq 2\log\log h(n)\leq \log h(n) \leq \log h(r)
	= \log (s(r)/r),
	\end{equation*}
where $r\in [n,n+1)$ and $n$ is large enough. This verifies the first inequality in (4). From $s(n+1)\leq 2s(n)$, we obtain $h(n+1)\leq \frac{2n}{n+1} h(n)\leq 2h(n)$. Thus
	\begin{align*}
	r\log h(r) &\leq (n+1)\log h(n+1)\leq (n+1)\log (2h(n))\\
	&\leq 2n\log h(n)=2u(r),\quad r\in [n,n+1),
	\end{align*}
where $n$ is large enough. This gives the second inequality in (4).

It is clear by (2) that
	$$
	\frac{r}{u(r)-|q|r}+\frac{r}{u(r)-r}
	=\frac{1}{u(r)/r-|q|}+\frac{1}{u(r)/r-1}\to 0,\quad r\to\infty,
	$$
 and that
	$$
	\frac{ru(r)}{(u(r)-|q|r)(u(r)-r)}\leq
 	\frac{2r}{u(r)},\quad r\geq R_0.
	$$
 Hence, choosing $\lambda=u(r)$ in Lemma~\ref{BHMK.lemma}, we obtain
	\begin{equation*}\label{m-n(u)}
	m\left(r,\frac{f(qz)}{f(z)}\right)\lesssim
	n(u(r),f)+n\left(u(r),\frac{1}{f}\right)+\frac{r}{u(r)}\cdot
	T(u(r),f).
	\end{equation*}
Since
	$$
	N(v(r),f)-N(r,f)=\int_r^{v(r)}\frac{n(t,f)}{t}\, dt
	\geq n(r,f)\log\frac{v(r)}{r},
	$$
we get
    $$
	n(r,f)\leq \frac{N(v(r),f)}{\log\frac{v(r)}{r}}
	\leq \frac{T(v(r),f)}{\log\frac{v(r)}{r}},
	$$
and similarly for $n(r,1/f)$. Thus, using properties (1), (3) and (4),
	\begin{equation*}
	\begin{split}
	m\left(r,\frac{f(qz)}{f(z)}\right)&\lesssim \frac{T(s(r),f)}{\log\frac{s(r)}{u(r)}}+
	\frac{T(u(r),f)}{u(r)/r}\\
	&\leq\frac{T(s(r),f)}{\log\frac{s(r)}{r}-\log \frac{u(r)}{r}}+\frac{2T(s(r),f)}{\log\frac{s(r)}{r}}
	\leq \frac{4T(s(r),f)}{\log\frac{s(r)}{r}}.
	\end{split}
	\end{equation*}
 The assertion now follows from the definition of the $\varphi$-order.

(b) By the assumptions there exists a $C\in (1,\infty)$ such that
$r<s(r)<Cr$ for all $r\geq R_0$. Choosing $\lambda=Br$ in Lemma~\ref{BHMK.lemma} for $B=2\max\{|q|,C\}$,  we infer
	\begin{equation}\label{m-esti-Br}
	m\left(r,\frac{f(qz)}{f(z)}\right)\lesssim
	n(Br,f)+n\left(Br,\frac{1}{f}\right)+T(Br,f).
	\end{equation}
Using the standard estimate
	\begin{equation}\label{N-2r geq n-r}
	N(2r,f)-N(r,f)=\int_{r}^{2r}\frac{n(t,f)}{t}\, dt
	\geq n(r,f)\log 2,
	\end{equation}
and similarly for the zeros of $f$, we obtain from \eqref{m-esti-Br} and
the definition of $\varphi$-order that
	\begin{equation}\label{m-esti-Br2}
	m\left(r,\frac{f(qz)}{f(z)}\right)\lesssim T(2Br,f)
	\lesssim \varphi(2Br)^{\rho_\varphi(f)+\varepsilon}.
	\end{equation}
Let $m=[2B]+1$, then the subadditivity of $\varphi(r)$ implies that
$\varphi(2Br)\leq \varphi(mr)\leq m\varphi(r)$, so the assertion
follows from \eqref{m-esti-Br2}.

If $f(0)=0$ or $\infty$, there exists $k\in\Z$ such that $g(z)=z^kf(z)$ satisfies $g(0)\neq 0,\infty$. Thus conclusions (a) and (b) hold for $g(z)$. The definition of $g$ yields $T(r,g)\lesssim T(r,f)$, thus  $\rho_\varphi(g)\leq\rho_\varphi(f)$. At the same time, we have
   \begin{equation*}
   m\left(r,\frac{f(qz)}{f(z)}\right)\asymp m\left(r,\frac{g(qz)}{g(z)}\right).
   \end{equation*}
Combining these facts completes the proof.
\end{proof}

\begin{remark}\label{re-1}
(a)\,The expression $\log\frac{s(r)}{r}$ in Lemma~\ref{q-difference-lemma}(a) tends to
infinity by \eqref{limit=infinity}, and hence its presence makes the estimate in Part~(a) stronger.
In Lemma~\ref{q-difference-lemma}(b), the situation is different, and we have two possibilities:
	$$
	\liminf_{r\to\infty}\frac{s(r)}{r}>1 \quad\textnormal{and}\quad
	\liminf_{r\to\infty}\frac{s(r)}{r}=1.
	$$
In the former case $r<C_1r\leq s(r)\leq C_2r$ holds for some $1<C_1<C_2<\infty$ and for all $r\geq R_0$.
Hence $0<\log C_1\leq \log \frac{s(r)}{r}\leq \log C_2<\infty$ for all $r\geq R_0$. The latter case is prevented by the global assumption \eqref{global-s/r}, but let us consider this possibility briefly.
If $s(r)$ is assumed to be convex, it follows that
	\begin{equation*}\label{limit=1}
	\lim_{r\to\infty}\frac{s(r)}{r}=1.
	\end{equation*}
Indeed, suppose on the contrary to this claim that 	there exists a sequence $\{r_n\}$ of positive real
numbers tending to infinity and a constant $C\in (1,\infty)$ independent of $n$ such that
$s(r_n)\geq Cr_n$ for all $n\in\N$. By the assumption $\liminf_{r\to\infty}\frac{s(r)}{r}=1$ and \eqref{assumption}, there exists another sequence $\{t_n\}$ of positive real numbers tending to infinity such that $t_n<s(t_n)<Ct_n$ for all $n$ large enough, say $n\geq N$.
Clearly there exist $m,k>N$ such that $t_N<r_m<t_k$, and we see that $s(r)$ is not convex
on the interval $[t_N,t_k]$, which is a contradiction. 	
	
	\vskip 2mm
(b)\,Suppose that a function $s:(R_0,\infty)\to(0,\infty)$ satisfies the inequality $s(r)\leq 2s(r-1)$
for all $r\geq R_0\geq 1$ as in the proof of Lemma~\ref{q-difference-lemma}. For example, the test
functions $s(r)=r^\alpha$, $\alpha\in(1,2]$ and $s(r)=r\log r$ satisfy this inequality. Let $r\geq R_0+1$.
Then there exists an $N\in\N$ such that $R_0+N\leq r<R_0+N+1$, and so we obtain
	$$
	s(r)\leq 2s(r-1)\leq 2^2s(r-2)\leq\cdots\leq 2^{N}s(R_0+1)=O(2^r).
	$$
This is the maximal growth rate for  $s(r)$ satisfying $s(r)\leq 2s(r-1)$.
Hence the inequality above does not impose a new growth restriction on $s(r)$ in addition to \eqref{assumption}.
Note that this inequality is not satisfied by $s(r)=e^r$.
\end{remark}


\subsection{General growth parameters $\alpha_{\varphi,s}$, $\beta_\varphi$ and $\gamma_{\varphi,s}$}\label{general-parameters-sec}

Next, we discuss the possible values for $\alpha_{\varphi,s}$, $\beta$ and $\gamma_{\varphi,s}$ defined in \eqref{liminf} and \eqref{L}. The global assumption \eqref{global-s/r} is also discussed.

\begin{remark}\label{gamma_s<0}
(a)\,Suppose that $\varphi(r)$ is subadditive and that \mbox{$\limsup_{r\to\infty}\frac{s(r)}{r}<\infty$.} Then we see that there exists an integer $N\geq 2$ such that
$s(r)<Nr$ for all $r\geq R_0$. Hence, $\gamma_{\varphi,s}=0$, and the subadditivity of $\varphi(r)$ yields
    \begin{equation}\label{sub-varphi}
    \varphi(r)\leq\varphi(s(r))\leq\varphi(Nr)\leq N\varphi(r),\quad r\geq R_0.
    \end{equation}
Thus
    $$
    1\geq  \alpha_{\varphi,s}=\liminf_{r\to\infty}\frac{\log \varphi(r)}{\log \varphi(s(r))}\geq \liminf_{r\to\infty}\frac{\log \varphi(r)}{\log (N\varphi(r))}=1,
    $$
which implies $\alpha_{\varphi,s}=1$.

\vskip 2mm
(b)\,Without \eqref{global-s/r}, $\gamma_{\varphi,s}$ might not have a lower bound. For example, suppose that $s(r)=r+1/r^k$, where $k\geq 1$, then there exists $R_0>0$ such that $s(r)$ is increasing and convex when $r>R_0$. Take $\varphi(r)=\log r$,
then
    $$
    \gamma_{\varphi,s}=\liminf_{r\to\infty}\frac{\log\log\left(1+r^{-(k+1)}\right)}{\log\log r}=-\infty.
    $$
    
\vskip 2mm
(c)\,With \eqref{global-s/r}, it is possible for some $\varphi(r)$ that $\alpha_{\varphi,s}=0$. First start from the point $(r_1,\log r_1)$ on the curve $y=\log x$, next take the point $(s(r_1),s(r_1))$ on $y=x$.
Then we choose $r_2$ such that $\log r_2>s(r_1)$, and pick up two points $(r_2,\log r_2)$ and $(s(r_2),s(r_2))$. Following the step, we can get the sequence $\{r_n\}$, which increases to $\infty$ as
 $n\to\infty$, further $\log r_{n+1}>s(r_n)$ for $n\in\mathbb{N}$. Let $\varphi(r):(r_1,\infty)\to (\log r_1,\infty)$ be the increasing function whose graph $y=\varphi(x)$ is a polygonal path which consists of the line segment connecting points $(r_n,\log r_n), (s(r_n),s(r_n))$ and the line segment connecting points $(s(r_n),s(r_n)),(r_{n+1},\log r_{n+1})$ for every $n\in\mathbb{N}$. Since
    $$
    \lim_{n\to\infty}\frac{\log\varphi(r_n)}{\log\varphi(s(r_n))}=\lim_{n\to\infty}\frac{\log\log r_n}{\log s(r_n)}=0,
    $$
this implies that $\alpha_{\varphi,s}=0$.
\end{remark}

\begin{lemma}\label{parameter-lemma}
Let $\alpha_{\varphi,s}$, $\beta_\varphi$ and $\gamma_{\varphi,s}$ be the constants in \eqref{liminf} and \eqref{L}. If $\gamma_{\varphi,s}=1$, then $\alpha_{\varphi,s}=\beta_\varphi=1$.
\end{lemma}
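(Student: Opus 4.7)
The plan is to unpack the definition of $\gamma_{\varphi,s} = 1$ and combine it with the global restrictions $\log r \leq \varphi(r) \leq r$ and $r < s(r) \leq r^2$ to pin down the asymptotic behavior of $\log \varphi(r)$ relative to $\log\log r$, from which both $\beta_\varphi = 1$ and $\alpha_{\varphi,s} = 1$ will fall out.

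First I would derive $\beta_\varphi = 1$. The hypothesis $\gamma_{\varphi,s} = 1$ says that for every $\varepsilon > 0$ and all $r \geq R_0(\varepsilon)$,
\[
\log \log \frac{s(r)}{r} \geq (1-\varepsilon) \log \varphi(r).
\]
Since $s(r) \leq r^2$ gives $\log(s(r)/r) \leq \log r$, one has $\log\log(s(r)/r) \leq \log\log r$, so the inequality propagates to $\log\log r \geq (1-\varepsilon)\log\varphi(r)$ for all large $r$. Letting $\varepsilon \to 0$ yields $\liminf_{r\to\infty} \log\log r / \log\varphi(r) \geq 1$. On the other hand, $\varphi(r) \geq \log r$ gives the reverse inequality $\log\log r / \log\varphi(r) \leq 1$ for $r$ large. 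Hence the limit exists, equals $1$, and in particular $\beta_\varphi = 1$.

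Next I would deduce $\alpha_{\varphi,s} = 1$ from the sharper statement $\log\varphi(r) \sim \log\log r$ that was just established. Write
\[
\frac{\log \varphi(r)}{\log \varphi(s(r))}
= \frac{\log\varphi(r)}{\log\log r}\cdot \frac{\log\log r}{\log\log s(r)}\cdot \frac{\log\log s(r)}{\log\varphi(s(r))}.
\]
The first factor tends to $1$ by the previous step; the third factor tends to $1$ by the same step applied along the sequence $s(r) \to \infty$. For the middle factor, the bounds $r < s(r) \leq r^2$ give $\log\log r < \log\log s(r) \leq \log\log r + \log 2$, so its ratio also tends to $1$. Therefore the whole expression tends to $1$, and $\alpha_{\varphi,s} = \liminf_{r\to\infty}\log\varphi(r)/\log\varphi(s(r)) = 1$.

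The argument is essentially bookkeeping, so I do not expect any real obstacle; the one small point to be careful about is that the inequality $\log \varphi(r) \geq \log \log r$ used to bound $\beta_\varphi$ from above requires $\log r \geq 1$, hence only $r$ sufficiently large, which is harmless.
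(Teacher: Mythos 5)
Your proof is correct and follows essentially the same route as the paper: both arguments first use $\gamma_{\varphi,s}=1$ together with $s(r)\leq r^2$ and $\varphi(r)\geq\log r$ to establish $\log\varphi(r)\sim\log\log r$ (hence $\beta_\varphi=1$), and then use this asymptotic equivalence to conclude $\alpha_{\varphi,s}=1$. Your three-factor decomposition of $\log\varphi(r)/\log\varphi(s(r))$ is a cosmetic variation of the paper's direct inequality chain via $\log\varphi(r^2)$, but the underlying idea is identical.
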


\begin{proof}
By the definitions of $\gamma_{\varphi,s}$ and $\beta_\varphi$ and by $s(r)\leq r^2$, we have
    $$
    1=\gamma_{\varphi,s}\leq \liminf_{n\to\infty}\frac{\log\log r}{\log \varphi(r)}
	\leq \beta_\varphi\leq 1,
    $$
which gives us $\beta_\varphi=1$.
Hence, we obtain for any $\varepsilon>0$
	$$
	(1-\varepsilon)\log\varphi(r) \leq \log\log r\leq \log\varphi(r),\quad r\geq R_0,
	$$
provided that $\log r\leq \varphi(r)$.
This gives us
	$$
	1\geq \alpha_{\varphi,s}\geq \liminf_{r\to\infty}\frac{\log \varphi(r)}{\log\varphi(r^2)}
	\geq (1-\varepsilon)\liminf_{r\to\infty}\frac{\log\log r}{\log\log r^2}=1-\varepsilon,
	$$	
where we may let $\varepsilon\to 0^+$. Thus $\alpha_{\varphi,s}=1$.
\end{proof}

\begin{lemma}\label{zeta-alpha-geq-1/2}
Let $\alpha_{\varphi,s}$ be the constant in \eqref{liminf}. If the limit
    $
    \lim_{r\to\infty}\frac{\log \varphi(r)}{\log \varphi(r^2)}
    $
     exists, then we have $ \alpha_{\varphi,s}\in[1/2,1]$.
\end{lemma}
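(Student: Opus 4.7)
The plan is to note first that the upper bound $\alpha_{\varphi,s}\le 1$ is already recorded just after \eqref{L} as a consequence of \eqref{general-restriction} and \eqref{assumption}, so the content of the statement is the lower bound $\alpha_{\varphi,s}\ge 1/2$. Since $s(r)\le r^2$ and $\varphi$ is non-decreasing, one has $\log\varphi(s(r))\le \log\varphi(r^2)$, whence
$$
\alpha_{\varphi,s}\ge \liminf_{r\to\infty}\frac{\log\varphi(r)}{\log\varphi(r^2)}=L,
$$
where $L\in[0,1]$ denotes the limit that is assumed to exist. It therefore suffices to prove $L\ge 1/2$.

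I would argue this by contradiction. Suppose $L<1/2$ and pick $\veps>0$ so small that $2(L+\veps)<1$. By the existence of the limit, $\log\varphi(r)\le (L+\veps)\log\varphi(r^2)$ holds for all $r\ge R_0$. Applying this estimate iteratively at $r,\,r^2,\,r^4,\ldots$ (each argument lies in $[R_0,\infty)$) gives, for every $n\in\N$,
$$
\log\varphi(r)\le (L+\veps)^n\log\varphi(r^{2^n}).
$$
The global restriction $\varphi(r)\le r$ from \eqref{general-restriction} then forces $\log\varphi(r^{2^n})\le 2^n\log r$, so
$$
\log\varphi(r)\le \bigl(2(L+\veps)\bigr)^n\log r,\quad n\in\N.
$$

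Fixing $r$ large and letting $n\to\infty$, the right-hand side tends to $0$ because $2(L+\veps)<1$, whereas the left-hand side is a fixed positive number (positivity follows from $\varphi(r)\ge \log r$ in \eqref{general-restriction}, which makes $\varphi$ unbounded and hence $\log\varphi(r)>0$ for $r$ large). This contradiction yields $L\ge 1/2$. I do not foresee any genuine obstacle beyond keeping the inequalities pointing in the right direction; the essential trick is to iterate the upper bound along the sequence $r^{2^n}$ and balance it against the trivial ceiling $\varphi(r)\le r$.
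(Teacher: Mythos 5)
Your proof is correct and uses essentially the same approach as the paper: reduce to showing $\zeta_\varphi:=\lim_{r\to\infty}\frac{\log\varphi(r)}{\log\varphi(r^2)}\geq 1/2$, assume the contrary, iterate the ratio inequality along dyadic powers of $r$, and collide with the ceiling $\varphi(r)\leq r$ from \eqref{general-restriction}. The only presentational difference is the direction of iteration: you go upward through $r, r^2, r^4,\ldots$ and obtain a pointwise contradiction by fixing $r$ and letting $n\to\infty$, while the paper iterates downward through $r, r^{1/2},\ldots$ and derives $\log\varphi(r)/\log r\to\infty$; both are valid and your version is slightly leaner since it avoids tracking $N$ against $r$.
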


\begin{proof}
Set
     \begin{equation}\label{zeta}
     \zeta_\varphi=\lim_{r\to\infty}\frac{\log \varphi(r)}{\log \varphi(r^2)},
     \end{equation}
Clearly $0\leq\zeta_\varphi\leq\alpha_{\varphi,s}\leq 1$, which yields $\alpha_{\varphi,s}\geq 1/2$ once $\zeta_\varphi\geq 1/2$.
Assume on the contrary  that $\zeta_\varphi<1/2$. It follows from \eqref {zeta} that for any $\varepsilon>0$
	$$
       \log \varphi(r)\leq (\zeta_\varphi+\varepsilon)\log\varphi(r^2),\quad r \geq R_0.
	$$
 Then, for every $r\geq	R_0$, there exists an $N\in\N\cup\{0\}$ such that $r\in  [R_0^{2^N}, R_0^{2^{N+1}})$. Thus,
    \begin{equation}\label{log-var-geq-N-cons}
    \begin{split}
       \log\varphi(r)&\geq \frac{1}{\zeta_\varphi+\varepsilon}\log\varphi(r^{1/2})\geq \cdots\geq \left(\frac{1}{\zeta_\varphi+\varepsilon}\right)^N\log\varphi(r^{1/2^N})\\
    &\geq\left(\frac{1}{\zeta_\varphi+\varepsilon}\right)^N\log\varphi(R_0),
    \quad r\in  [R_0^{2^N}, R_0^{2^{N+1}}).
    \end{split}
    \end{equation}
Since $\zeta_\varphi<1/2$, then for $0<\varepsilon<1/2-\zeta_\varphi$, we have $2(\zeta_\varphi+\varepsilon)<1$. From \eqref{log-var-geq-N-cons}, we obtain
    $$
    \frac{\log\varphi(r)}{\log r}\geq \left(\frac{1}{2(\zeta_\varphi+\varepsilon)}\right)^N\frac{\log\varphi(R_0)}{\log R_0^2},\quad r\in  [R_0^{2^N}, R_0^{2^{N+1}}),
    $$
which implies
    $$
    \lim_{r\to\infty}\frac{\log\varphi(r)}{\log r}=\infty.
    $$
This contradicts with the assumption \eqref{general-restriction}. This completes the proof.
\end{proof}

\begin{lemma}\label{beta.lemma}
Let $\alpha_{\varphi,s}$, $\beta_\varphi$ be the constants in \eqref{liminf}. If $\beta_\varphi>0$ and the limit
    $\zeta_\varphi$ in \eqref{zeta}
     exists,  then $\alpha_{\varphi,s}=1$.
\end{lemma}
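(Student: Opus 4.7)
The plan is to argue by contradiction, showing that $\zeta_\varphi < 1$ forces $\beta_\varphi = 0$. Since $r < s(r) \leq r^2$ and $\varphi$ is non-decreasing, one has $\log\varphi(r) \leq \log\varphi(s(r)) \leq \log\varphi(r^2)$, and therefore
\[
\zeta_\varphi = \lim_{r\to\infty}\frac{\log\varphi(r)}{\log\varphi(r^2)}
\leq \liminf_{r\to\infty}\frac{\log\varphi(r)}{\log\varphi(s(r))} = \alpha_{\varphi,s} \leq 1.
\]
Hence it suffices to show $\zeta_\varphi = 1$, for then $\alpha_{\varphi,s} = 1$ automatically.

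Suppose to the contrary that $\zeta_\varphi < 1$. Fix $\varepsilon > 0$ small enough that $\zeta_\varphi + \varepsilon < 1$ and set $c = 1/(\zeta_\varphi+\varepsilon) > 1$. From the existence of the limit $\zeta_\varphi$, there is $R_0$ such that
\[
\log\varphi(r^2) \geq c\,\log\varphi(r),\quad r\geq R_0.
\]
Iterating this on the sequence $r, r^2, r^{4}, \ldots$ gives, by induction,
\[
\log\varphi\bigl(r^{2^N}\bigr) \geq c^N\,\log\varphi(r),\quad N\in\N,\ r\geq R_0,
\]
in the same spirit as the iteration carried out in the proof of Lemma~\ref{zeta-alpha-geq-1/2}.

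Now take an arbitrary $r \geq R_0^2$, and choose $N \in \N$ with $R_0^{2^N} \leq r < R_0^{2^{N+1}}$. Using monotonicity of $\varphi$ and the displayed iteration with the starting point $R_0$ in place of $r$,
\[
\log\varphi(r) \geq \log\varphi\bigl(R_0^{2^N}\bigr) \geq c^N \log\varphi(R_0),
\]
while $\log\log r < \log\bigl(2^{N+1}\log R_0\bigr) = (N+1)\log 2 + \log\log R_0$. Combining these,
\[
\frac{\log\log r}{\log\varphi(r)} \leq \frac{(N+1)\log 2 + \log\log R_0}{c^N \log\varphi(R_0)} \longrightarrow 0
\]
as $r\to\infty$, since $N\to\infty$ and $c > 1$. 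This forces $\beta_\varphi = 0$, contradicting the hypothesis $\beta_\varphi > 0$. Hence $\zeta_\varphi = 1$, and $\alpha_{\varphi,s} = 1$ follows. No real obstacle is anticipated; the only subtlety is to pass from the iteration along the discrete sequence $\{R_0^{2^N}\}$ to all sufficiently large $r$, which is handled by the monotonicity of $\varphi$ together with the choice of $N$ above.
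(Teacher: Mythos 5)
Your proof is correct and takes essentially the same route as the paper: argue that $\zeta_\varphi<1$ forces $\beta_\varphi=0$ by iterating the inequality $\log\varphi(r^2)\geq c\log\varphi(r)$ with $c=1/(\zeta_\varphi+\varepsilon)>1$, locating $r$ in a dyadic window $[R_0^{2^N},R_0^{2^{N+1}})$, and comparing the resulting geometric growth of $\log\varphi(r)$ against the linear-in-$N$ growth of $\log\log r$. The only cosmetic difference is that you iterate upward from $R_0$ and then invoke monotonicity of $\varphi$, whereas the paper iterates downward from $r$ to $r^{1/2^N}$; both yield the same bound $\log\varphi(r)\geq c^N\log\varphi(R_0)$, and the conclusion $\zeta_\varphi=1\Rightarrow\alpha_{\varphi,s}=1$ via $\zeta_\varphi\leq\alpha_{\varphi,s}\leq 1$ is identical.
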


\begin{proof}
We claim that the constant $\zeta_\varphi$ in \eqref{zeta} satisfies $\zeta_\varphi=1$. Suppose on the contrary that $\zeta_\varphi<1$. Then for $0<\varepsilon<1-\zeta_\varphi$, we have $\zeta_\varphi+\varepsilon<1$. Following the proof of Lemma~\ref{zeta-alpha-geq-1/2}, for every $r\geq	R_0$, there exists an $N\in\N\cup\{0\}$ satisfying $r\in  [R_0^{2^N}, R_0^{2^{N+1}})$ and we obtain \eqref{log-var-geq-N-cons}.
 Since
    $$
     \log\log r \leq N\log 2+\log\log R_0^2,\quad r\in  [R_0^{2^N}, R_0^{2^{N+1}}),
    $$
    it follows that
    $$
    \frac{\log \log r}{\log \varphi(r)}\leq \frac{N\log 2+\log\log R_0^2}{\left(\frac{1}{\zeta_\varphi+\varepsilon}\right)^N\log\varphi(R_0)}\to 0,\quad r\to \infty.
    $$
This contradicts with $\beta_\varphi>0$, so the assertion follows from $\zeta_\varphi\leq\alpha_{\varphi,s}\leq 1$.
\end{proof}
\begin{remark}\label{ilpo}
(a)\,Assuming $\beta_\varphi=0$, we estimate the value of $\alpha_{\varphi,s}$ in two cases, where $r<s(r)\leq r^\eta$ for a constant $\eta\in(1,2]$.

 (a1)\,Suppose that $\varphi(r)$ is subadditive, and set
	$$
	\kappa=\limsup_{r\to\infty}\frac{\log r}{\log\varphi(r)}.
	$$
Clearly $\kappa\geq 1$. For every $r\geq R_0$, there exists an $N\in\N$ such that $\;\;\;\;\;\;\;\;\;\;\;\;\;\;N\leq r^{\eta-1}\leq N+1$.
Using subadditivity yields
	$$
	\varphi(s(r))\leq \varphi(r^{\eta-1}\cdot r)\leq \varphi((N+1)r)
	\leq (N+1)\varphi(r)\leq (r^{\eta-1}+1)\varphi(r),
	$$
which gives $\alpha_{\varphi,s}\geq \frac{1}{(\eta-1)\kappa+1}$. For example, if $\varphi(r)=r^\nu$, $0<\nu\leq 1$,
it is easy to see that $\beta_\varphi=0$ and $\alpha_{\varphi,s}\geq \frac{\nu}{\eta-1+\nu}$. Moreover, if $\eta<1+\nu$, then $\alpha_{\varphi,s}>1/2$. This shows that $\alpha_{\varphi,s}$ in Lemma~\ref{zeta-alpha-geq-1/2} can be strictly greater than $1/2$.

 (a2)\,In other way, we assume $\varphi(r^\eta)\leq \varphi(r)^\eta$ for all $r\geq R(\eta)$. This inequality is satisfied by all test functions $\varphi(r)$ in \eqref{test-functions}. We obtain
	\begin{equation}\label{alpha}
	1\geq \frac{\log\varphi(r)}{\log\varphi(s(r))}\geq \frac{\log\varphi(r)}{\log\varphi(r^\eta)}
	\geq \frac{1}{\eta}\geq\frac12.
	\end{equation}
In particular, if $s(r)=r\log r$, then the discussion above is valid for all $\eta>1$, and consequently
$\alpha_{\varphi,s}=1$ in \eqref{liminf}. Here we could be even less restrictive by assuming $\varphi(r^\eta)\leq \big(\varphi(r)\log\varphi(r)\big)^\eta$ for all $r\geq R_0$, in which case \eqref{alpha}
would hold asymptotically.

\vskip 2mm
(b)\,The parameters $\alpha_{\varphi,s}$ and $\gamma_{\varphi,s}$ depend not only on $\varphi(r)$ but also on $s(r)$. For some specific choices for $s(r)$, we obtain $\beta_\varphi=\gamma_{\varphi,s}$ and $\alpha_{\varphi,s}=1$. If $\beta_\varphi=0$ and $\varphi(r)$ is subadditive, we take $s(r)=2r$, then $\alpha_{\varphi,s}=1$ and $\beta_\varphi=\gamma_{\varphi,s}=0$. If $\beta_\varphi>0$ and if the limit
    $
    \zeta_\varphi
    $
      in \eqref{zeta} exists, then $\alpha_{\varphi,s}=1$ from Lemma~\ref{beta.lemma}.
In this case, 
    $$
    \gamma_{\varphi,s}=\liminf_{r\to \infty}\frac{\log\log r}{\log\varphi(r)}
    $$
    by choosing $s(r)=r^2$.
    Then $\beta_\varphi=\gamma_{\varphi,s}$ if and only if the limit
    \begin{equation}\label{lim-gamma=beta}
    \lim_{r\to\infty}\frac{\log\log r}{\log\varphi(r)}
    \end{equation}
    exists. Note that if $\varphi(r)$ is not smooth enough, then the limit \eqref{lim-gamma=beta} does not exist. For example,
    $\varphi(r)$ constructed in Remark~\ref{gamma_s<0}(c) satisfies 
	$$
   \lim_{n\to\infty}\frac{\log\log r_n}{\log \varphi(r_n)}=\lim_{n\to\infty}\frac{\log\log r_n}{\log\log r_n}=1,
    $$
    $$    \lim_{n\to\infty}\frac{\log\log s( r_n)}{\log \varphi(s(r_n))}=\lim_{n\to\infty}\frac{\log\log s(r_n)}{\log s(r_n)}=0.
    $$
    Thus, the limit superior and limit inferior of
	$\frac{\log\log r}{\log\varphi(r)}$ are $1$ and $0$ respectively.
    \end{remark}


\subsection{General growth properties}\label{general-growth-sec}

The next result generalizes \cite[p.~14]{Boas} and \cite[Theorem~1.1]{Chern}.

\begin{lemma}\label{convergent-lemma}
Let $T:(0,\infty)\to(0,\infty)$ be an integrable, non-decreasing and unbounded function. Suppose that $\varphi(r)$ is differentiable.
Then $T$ is of $\varphi$-order $\rho\in [0,\infty)$ if and only if the integral
         \begin{equation}\label{integral}
         \int^\infty\frac{\varphi'(t)T(t)}{\varphi(t)^{\mu+1}}\, dt
         \end{equation}
is convergent for $\mu>\rho$ and divergent for $\mu<\rho$. Moreover, if $\rho_\varphi(T(r))=\infty$,
then the integral in \eqref{integral} is divergent for every $\mu\in\R$.
\end{lemma}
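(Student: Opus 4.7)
My plan is to prove the convergence (for $\mu>\rho$) and divergence (for $\mu<\rho$, and for all $\mu$ when $\rho=\infty$) parts separately, working directly with the primitive of $\varphi'(t)/\varphi(t)^{\mu+1}$, namely $-\varphi(t)^{-\mu}/\mu$ when $\mu\neq 0$ and $\log\varphi(t)$ when $\mu=0$. Equivalently, the substitution $u=\varphi(t)$ converts the integral into $\int^\infty S(u)u^{-\mu-1}\,du$ with $S(u)=T(\varphi^{-1}(u))$; since
\begin{equation*}
\limsup_{u\to\infty}\frac{\log S(u)}{\log u}=\limsup_{t\to\infty}\frac{\log T(t)}{\log\varphi(t)}=\rho_\varphi(T),
\end{equation*}
this reduces the statement to the classical Boas-type theorem \cite[p.~14]{Boas}. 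I would argue directly, however, to avoid concerns about whether $\varphi$ is strictly increasing (flat intervals contribute nothing, because $\varphi'\equiv 0$ there).

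For convergence, assume $\rho<\mu$ and fix $\varepsilon\in(0,\mu-\rho)$. The definition of $\varphi$-order gives $T(t)\leq\varphi(t)^{\rho+\varepsilon}$ for $t\geq R_1$, hence
\begin{equation*}
\int_{R_1}^{\infty}\frac{\varphi'(t)T(t)}{\varphi(t)^{\mu+1}}\,dt
\leq \int_{R_1}^{\infty}\frac{\varphi'(t)}{\varphi(t)^{\mu-\rho-\varepsilon+1}}\,dt
= \frac{\varphi(R_1)^{-(\mu-\rho-\varepsilon)}}{\mu-\rho-\varepsilon}<\infty.
\end{equation*}

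For divergence when $\mu<\rho$ (or when $\rho=\infty$ and $\mu\in\mathbb{R}$ is arbitrary), I fix $\varepsilon>0$ with $\mu<\rho-\varepsilon$ (respectively an exponent $M>\mu$). The $\limsup$ in the definition of $\rho_\varphi(T)$ produces a sequence $r_n\to\infty$ with $T(r_n)\geq\varphi(r_n)^{\rho-\varepsilon/2}$ (respectively $T(r_n)\geq\varphi(r_n)^M$). Since $\varphi$ is continuous and unbounded, I pick $R_n>r_n$ with $\varphi(R_n)=2\varphi(r_n)$, and after passing to a subsequence I arrange $R_n<r_{n+1}$, so that the intervals $[r_n,R_n]$ are pairwise disjoint. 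Monotonicity of $T$ and the primitive above yield
\begin{equation*}
\int_{r_n}^{R_n}\frac{\varphi'(t)T(t)}{\varphi(t)^{\mu+1}}\,dt
\geq T(r_n)\int_{r_n}^{R_n}\frac{\varphi'(t)}{\varphi(t)^{\mu+1}}\,dt
= c_\mu\,T(r_n)\,\varphi(r_n)^{-\mu},
\end{equation*}
where $c_\mu=(1-2^{-\mu})/\mu>0$ for $\mu\neq 0$ and $c_\mu=\log 2$ for $\mu=0$. Each block contribution is at least a positive constant times $\varphi(r_n)^{\rho-\mu-\varepsilon/2}$ (respectively $\varphi(r_n)^{M-\mu}$), which tends to infinity, so the full integral diverges.

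The only real subtlety is arranging $R_n<r_{n+1}$ while preserving the lower bound $T(r_n)\geq\varphi(r_n)^{\rho-\varepsilon/2}$ along the thinned subsequence; this uses nothing more than continuity and unboundedness of $\varphi$ together with the definition of $\limsup$, so it is routine. Splitting the antiderivative of $\varphi'/\varphi^{\mu+1}$ into the branches $\mu>0$, $\mu=0$, $\mu<0$ is the remaining bookkeeping matter, and in every branch the sign of the explicit constant $c_\mu$ comes out positive.
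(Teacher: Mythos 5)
Your argument is correct, and on the divergence side it takes a genuinely different route from the paper's. To show the integral diverges when $\mu<\rho$, the paper splits into two cases: for $\mu\leq 0$ it bounds the integrand below by $\varphi'(t)/\varphi(t)$ (using $T\geq 1$ and $\varphi^\mu\leq 1$ eventually) and notes $\int\varphi'/\varphi=\infty$; for $0<\mu<\rho$ it argues by contradiction, writing $\varphi(r)^{-\mu}=\int_r^\infty\mu\varphi'(t)\varphi(t)^{-\mu-1}\,dt$ and using monotonicity of $T$ to get $T(r)\lesssim\varphi(r)^\mu$, hence $\rho_\varphi(T)\leq\mu<\rho$, a contradiction. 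Your dyadic-block construction along a $\limsup$-realizing sequence $\{r_n\}$ (with $\varphi(R_n)=2\varphi(r_n)$ and thinned so the blocks are disjoint) handles all $\mu<\rho$ uniformly, with no sign split, and directly exhibits the divergence rather than deriving it by contradiction; your constant $c_\mu=(1-2^{-\mu})/\mu$ (or $\log 2$ for $\mu=0$) is indeed positive in every branch, and the per-block lower bound $c_\mu\varphi(r_n)^{\rho-\mu-\varepsilon/2}\to\infty$ is correct since $\rho-\mu-\varepsilon/2>\varepsilon/2>0$. This is a valid and somewhat cleaner alternative. For the $\rho=\infty$ case the same blocks work with exponent $M>\mu$, exactly as you say.

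One small omission: the lemma is stated as an ``if and only if,'' and you only prove the forward implication (that $\rho_\varphi(T)=\rho$ forces convergence/divergence at the stated thresholds) together with the $\rho=\infty$ clause. The reverse implication should at least be noted: if the integral converges for $\mu>\rho$ and diverges for $\mu<\rho$, set $\rho'=\rho_\varphi(T)$; were $\rho'\neq\rho$ (including $\rho'=\infty$), your forward result applied to $\rho'$ would place the convergence threshold at $\rho'$, contradicting the hypothesized threshold $\rho$. The paper closes this direction explicitly via the same ratio bound $\frac{T(r)}{\mu\varphi(r)^\mu}\leq\int_r^\infty\varphi'T\varphi^{-\mu-1}$ and a short contradiction; you get it for free from threshold uniqueness, but it deserves a sentence.
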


\begin{proof}
We prove the result in three steps.

(i) Suppose that $\rho_{\varphi}(T)=\rho<\infty$. For $\mu=\rho+2\varepsilon>\rho$, there exists an
$r_\varepsilon>e$ such that $T(r)<\varphi(r)^{\rho+\varepsilon}$ for $r>r_\varepsilon$. It follows that
       \begin{eqnarray*}
        \int^{\infty}\frac{\varphi'(t)T(t)}{\varphi(t)^{\mu+1}}\, dt
        &=&\int^{r_\varepsilon}\frac{\varphi'(t)T(t)}{\varphi(t)^{\mu+1}}\, dt
        +\int_{r_\varepsilon}^{\infty}\frac{\varphi'(t)T(t)}{\varphi(t)^{\mu+1}}\, dt\\
        &\leq&  O(1)+\int_{r_\varepsilon}^{\infty}\frac{\varphi'(t)}{\varphi(t)^{1+\varepsilon}}dt
        <\infty,
       \end{eqnarray*}
that is, the integral \eqref{integral} is convergent for $\mu>\rho$.
We proceed to prove that the integral \eqref{integral} is divergent for $\mu<\rho.$ Since
	$$
	\int^\infty\frac{\varphi'(t)T(t)}{\varphi(t)^{\mu+1}}\, dt\geq \int^\infty\frac{\varphi'(t)T(t)}{\varphi(t)}\, dt\geq \int^\infty\frac{\varphi'(t)}{\varphi(t)}\, dt=\infty,\quad \mu\leq 0,
	$$
we see that the integral \eqref{integral} is divergent for $\mu\leq 0.$
Suppose on the contrary to our claim that the integral \eqref{integral} is convergent for some $\mu\in (0,\rho).$ Clearly,
	$$
	\frac{1}{\varphi(r)^\mu}=-\int_r^\infty d\left(\varphi(t)^{-\mu}\right)
	=\int_r^\infty\frac{\mu\varphi'(t)}{\varphi(t)^{\mu+1}}\, dt.
	$$
By the assumptions on $T$, we obtain
	\begin{equation}\label{ratio-finite}
	\frac{T(r)}{\mu\varphi(r)^\mu}
	=T(r)\int_r^\infty\frac{\varphi'(t)}{\varphi(t)^{\mu+1}}\, dt
	\leq \int_r^\infty\frac{\varphi'(t)T(t)}{\varphi(t)^{\mu+1}}\, dt<\infty.
	\end{equation}
Hence, the $\varphi$-order $\rho$ of $T(r)$ is at most $\mu$, which contradicts with our assumption $\mu<\rho.$

(ii) Suppose that the integral \eqref{integral} is convergent for $\mu>\rho$ and divergent for $\mu<\rho$. It follows from \eqref{ratio-finite} that the $\varphi$-order of $T(r)$ satisfies $\;\;\;\;\;\;\;\;\;\;\rho_{\varphi}(T)\leq \mu=\rho+\varepsilon$ for any positive number $\varepsilon$, and hence $\rho_{\varphi}(T)\leq \rho$.

Assuming that $\rho_{\varphi}(T)< \rho$,  we can write $\rho_{\varphi}(T)=\rho-2\epsilon$ for some $\epsilon>0$. From (i), the integral \eqref{integral} is convergent for $\mu=\rho_{\varphi}(T)+\epsilon=\rho-\epsilon$, which contradicts the assumption that the integral \eqref{integral} is convergent for $\mu>\rho$. It follows that $\rho_{\varphi}(T)=\rho$.

(iii) Finally suppose that $\rho_\varphi(T)=\infty$. A simple modification of the reasoning
in (i) shows that \eqref{integral} diverges for every $\mu\in\R$.
\end{proof}

For a meromorphic function $f$ of finite classical order, it is well-known that $\rho(f')=\rho(f)$, whereas for a transcendental meromorphic function $f$ of finite logarithmic order, \cite[Theorem~6.1]{Chern} shows that
$\rho_{\log}(f')=\rho_{\log}(f)$. In order to preserve the $\varphi$-order in differentiation, we need to impose
some restrictions that are automatically true for the classical order and for the logarithmic order.

Before stating our result, we state the following variation of  Chuang's \mbox{inequality}, which follows easily from the proof of \cite[Theorem~2.6.1]{zheng}: \emph{ Let $f$ be a meromorphic function with $f(0)\neq\infty$. Then for $0<r<R$, we have
    \begin{equation}\label{chuang's ine}
    T(r,f)\lesssim \frac{R}{R-r} \left(1+\log\frac{R}{R-r}\right)T(R,f')+\log^+R+1.
    \end{equation}
}
\begin{lemma}
Let $f$ be a transcendental meromorphic function of finite \mbox{$\varphi$-order.}
Assume that one of the following assumptions holds:
\begin{itemize}
\item[\textnormal{(a)}]
 $\varphi(r)$ is subadditive.
\item[\textnormal{(b)}]
Suppose that the constant $\alpha_{\varphi,s}$ in \eqref{liminf} satisfies
$\alpha_{\varphi,s}= 1$,  and that $s(r)$ is a non-decreasing function such that
    \begin{equation}\label{assumption-s}
    \rho_{\varphi}\left(\frac{s(r)}{s(r)-r}\right) = 0.
	\end{equation}
	\end{itemize}	
Then $\rho_\varphi(f')=\rho_\varphi(f)$.
\end{lemma}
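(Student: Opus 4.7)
I would prove the two inequalities $\rho_\varphi(f')\leq \rho_\varphi(f)$ and $\rho_\varphi(f)\leq \rho_\varphi(f')$ separately, in each case using a single auxiliary radius $R=R(r)$ whose choice is dictated by the assumption in force: $R=2r$ under (a), and $R=s(r)$ under (b). The standing observation that $\rho_\varphi(g)\geq \beta_\varphi$ for any transcendental meromorphic $g$ (because $T(r,g)/\log r\to \infty$ forces $\log T(r,g)\geq \log\log r+O(1)$) will be used repeatedly to absorb unavoidable $\log r$–type error terms; note in particular that $f'$ is transcendental whenever $f$ is, since otherwise integration would yield a rational or logarithmic antiderivative.

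\textbf{The inequality $\rho_\varphi(f')\leq \rho_\varphi(f)$.} The plan is to combine the pole–counting estimate $N(r,f')\leq 2N(r,f)$ with $m(r,f')\leq m(r,f)+m(r,f'/f)$, yielding
\[
T(r,f')\leq 2T(r,f)+m(r,f'/f).
\]
Then I would invoke the standard logarithmic derivative estimate
\[
m(r,f'/f)\leq 4\log^+ T(R,f)+4\log\tfrac{R}{R-r}+4\log^+ R+O(1),\qquad 0<r<R,
\]
taking $R=2r$ in case (a) and $R=s(r)$ in case (b). Since $\log(A+B+C)\leq \log\max(A,B,C)+O(1)$, dividing by $\log\varphi(r)$ reduces everything to showing that each of $\log T(R,f)$, $\log R$ and $\log(R/(R-r))$ is at most $O(\log\varphi(r))$; under (a) this follows from subadditivity ($\log\varphi(2r)\leq \log 2+\log\varphi(r)$ and $\log r\leq \varphi(r)$), while under (b) it follows from $\alpha_{\varphi,s}=1$ (which gives $\log\varphi(s(r))\sim\log\varphi(r)$), from $\log s(r)\leq 2\log r\leq 2\varphi(r)$, and from the hypothesis $\rho_\varphi(s(r)/(s(r)-r))=0$. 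Passing to $\limsup$ and using $\beta_\varphi\leq \rho_\varphi(f)$ yields $\rho_\varphi(f')\leq \rho_\varphi(f)$.

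\textbf{The inequality $\rho_\varphi(f)\leq \rho_\varphi(f')$.} Here I would apply the Chuang-type inequality \eqref{chuang's ine} with the same choice of $R$. Under (a) with $R=2r$ this reads
\[
T(r,f)\lesssim T(2r,f')+\log r+1,
\]
and subadditivity of $\varphi$ gives $\log\varphi(2r)\sim\log\varphi(r)$, so the first term contributes at most $\rho_\varphi(f')$ to $\limsup \log T(r,f)/\log\varphi(r)$, while the $\log r$ term contributes at most $\beta_\varphi\leq \rho_\varphi(f')$. Under (b) with $R=s(r)$, Chuang's inequality delivers three extra factors, $\log(s(r)/(s(r)-r))$, $\log\log(s(r)/(s(r)-r))$ and $\log s(r)$; the first two are $o(\log\varphi(r))$ by hypothesis, the third is controlled by $\beta_\varphi$, and $T(s(r),f')$ contributes $(\rho_\varphi(f')+\varepsilon)\log\varphi(s(r))/\log\varphi(r)\to \rho_\varphi(f')$ because $\alpha_{\varphi,s}=1$.

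\textbf{Main obstacle.} The arithmetic is routine once the correct $R$ has been chosen; the subtle point is showing that the inevitable additive error term $\log R$ from the logarithmic derivative lemma and from Chuang's inequality is harmless, since in general $\log r$ need not be dominated by $\log\varphi(r)$. The remedy—observing that $\rho_\varphi(f)\geq \beta_\varphi$ and $\rho_\varphi(f')\geq \beta_\varphi$ for transcendental $f$ and $f'$—is the key step that must be recorded explicitly. After that the only hypothesis-specific work is verifying, under (b), that the extra factor $s(r)/(s(r)-r)$ produced by Chuang's inequality drops out thanks to the assumption \eqref{assumption-s}.
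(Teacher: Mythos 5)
Your proof is correct and follows essentially the same route as the paper: Chuang's inequality \eqref{chuang's ine} with $R=2r$ under (a) and $R=s(r)$ under (b) to get $\rho_\varphi(f)\leq\rho_\varphi(f')$, and a logarithmic-derivative bound with the matching radius to get the reverse, absorbing the $\log r$ error via $\beta_\varphi\leq\rho_\varphi(f)$. The paper streamlines the reverse direction slightly—using $m(r,f'/f)=O(\log r)$ directly (available because the global restriction \eqref{global-rho<1} removes any exceptional set) in case (a), and citing the Gol'dberg--Grinshtein estimate in case (b)—but these are just different packagings of the same tool.
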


\begin{proof}
If $f(0)=\infty$, there exists a $k\in\N$ such that $g(z)=z^kf(z)
$ satisfies $g(0)\neq \infty$, and
$T(r,g)=(1+o(1))T(r,f)$. Hence we suppose that $f(0)\neq \infty.$

(a) Choosing $R=2r$ in \eqref{chuang's ine}, we have
	$
  	T(r,f)\lesssim T(2r,f')+\log^+ r.
	$
Since $f$ is a transcendental meromorphic function of finite $\varphi$-order, then
    $$
    T(r,f')\lesssim T(r,f)+m\left(r,\frac{f'}{f}\right)\lesssim T(r,f)+\log r\lesssim T(r,f).
    $$
Thus, the subadditivity of $\varphi$ and the two inequalities above yield the assertion.

(b) Choosing $R=s(r)$ in \eqref{chuang's ine}, we have
	\begin{equation*}
  	T(r,f)\lesssim \frac{s(r)}{s(r)-r}\left(1+\log \frac{s(r)}{s(r)-r}\right) T(s(r),f')+\log^+ r.
	\end{equation*}
Then $\alpha_{\varphi,s}=1$ and \eqref{assumption-s} give us
	  \begin{equation}\label{f-leq-f'}
	  \rho_\varphi(f)\leq 2\rho_{\varphi}\left(\frac{s(r)}{s(r)-r}\right)+\limsup_{r\to\infty}\frac{\log T(s(r),f')}{\log\varphi(s(r))}\cdot\frac{1}{\alpha_{\varphi,s}}=\rho_\varphi(f').
	 \end{equation}
An estimate by Gol'dberg-Grinshtein \cite[Corollary~3.2.3]{CY} is
	\begin{equation*}
	\begin{split}
  	T(r,f')
  	& \lesssim  T(r,f)+\log^+ T(s(r),f)+\log^+\frac{s(r)}{r(s(r)-r)}\\
  	&\lesssim T(s(r),f)+\log^+\frac{s(r)}{r(s(r)-r)}.
  	\end{split}
	\end{equation*}
Therefore, $\rho_\varphi(f')\leq\rho_\varphi(f)$.
Combining this with \eqref{f-leq-f'}, the assertion follows.
\end{proof}


\section{The $\varphi$-exponent of convergence}\label{exp-conv-sec}

Now we define the $\varphi$-exponent of convergence $\lambda_\varphi$ of a sequence $\{z_n\}$
of points with an unbounded modulus as follows: If for all $\mu>0$,
$$\sum_n \frac{1}{\varphi(|z_n|)^\mu}=\infty,$$
we set $\lambda_\varphi=\infty$; while if the infinite sum is finite for some $\mu>0$, we denote
	$$
	\lambda_\varphi=\inf\left\{\mu>0:\sum_n\frac{1}{\varphi(|z_n|)^\mu}<\infty\right\}.
	$$


\subsection{Growth of the counting functions}

The following result generalizes \cite[p.~15]{Boas} and \cite[Theorem~3.1]{Chern}.

\begin{lemma}\label{n-lemma}
Suppose that $f$ is a non-constant meromorphic function and of finite
$\varphi$-order. Then, for each $a\in\widehat{\C}$, the $\varphi$-order
of $n(r,a,f)$ equals to the $\varphi$-exponent of convergence of the
$a$-points of $f$.
\end{lemma}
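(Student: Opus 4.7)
The plan is to apply Lemma~\ref{convergent-lemma} to $T(r)=n(r,a,f)$ and bridge its defining integral to the series defining $\lambda_\varphi$ via Riemann--Stieltjes integration by parts; both quantities will then encode precisely the same convergence information.

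First I would dispose of the trivial case: if $f$ attains the value $a$ only finitely often, then $n(r,a,f)$ is eventually constant, so $\rho_\varphi(n(r,a,f))=0$, while the defining sum of $\lambda_\varphi$ has only finitely many terms, giving $\lambda_\varphi=0$. Otherwise let $\{z_k\}_k$ denote the $a$-points of $f$ counted with multiplicity, write $r_k=|z_k|$, and observe that $n(r,a,f)$ is non-decreasing, integrable and unbounded, so Lemma~\ref{convergent-lemma} is applicable. An Abel summation then yields, for every $R>r_1$ and $\mu>0$, the bridge identity
\begin{equation*}
\sum_{r_k\leq R}\frac{1}{\varphi(r_k)^\mu}
=\frac{n(R,a,f)}{\varphi(R)^\mu}+\mu\int_{r_1^-}^R\frac{\varphi'(t)\,n(t,a,f)}{\varphi(t)^{\mu+1}}\,dt.
\end{equation*}

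Next I would run the two inequalities through this identity. For $\lambda_\varphi\leq\rho_\varphi(n(r,a,f))$, pick any $\mu>\rho_\varphi(n(r,a,f))$ and $\varepsilon>0$ with $\mu>\rho_\varphi(n(r,a,f))+\varepsilon$: Lemma~\ref{convergent-lemma} gives convergence of the right-hand integral as $R\to\infty$, while the standard estimate $n(R,a,f)\lesssim\varphi(R)^{\rho_\varphi(n(r,a,f))+\varepsilon}$ forces the boundary term to tend to zero, so the series converges and $\lambda_\varphi\leq\mu$; letting $\mu\searrow\rho_\varphi(n(r,a,f))$ yields the claim. For the reverse inequality $\rho_\varphi(n(r,a,f))\leq\lambda_\varphi$, take any $\mu>\lambda_\varphi$, so the series converges. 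Monotonicity of $\varphi$ gives
\begin{equation*}
\frac{n(R,a,f)-n(R_0,a,f)}{\varphi(R)^\mu}\leq\sum_{R_0<r_k\leq R}\frac{1}{\varphi(r_k)^\mu},
\end{equation*}
whose right-hand side is made arbitrarily small by choosing $R_0$ large, after which $n(R_0,a,f)/\varphi(R)^\mu\to 0$ as $R\to\infty$ because $\varphi$ is unbounded. This upgrades pure boundedness to $n(R,a,f)=o(\varphi(R)^\mu)$, hence $\rho_\varphi(n(r,a,f))\leq\mu$; letting $\mu\searrow\lambda_\varphi$ finishes the argument. The case $\rho_\varphi(n(r,a,f))=\infty$ is handled symmetrically: Lemma~\ref{convergent-lemma} forces the integral to diverge for every $\mu\in\R$, and the bridge identity then forces the series to diverge as well, so $\lambda_\varphi=\infty$.

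The main obstacle I expect is precisely the ``bounded $\Rightarrow o(1)$'' upgrade for the boundary term in the second inequality: using only $\varphi(r_k)\leq\varphi(R)$ one gets just $n(R,a,f)/\varphi(R)^\mu\leq\sum_k\varphi(r_k)^{-\mu}$, which does not by itself decay, so the head/tail split above (fixing $R_0$ before letting $R\to\infty$) is essential. A secondary, low-level point is that Lemma~\ref{convergent-lemma} requires $\varphi$ to be differentiable, a hypothesis that is presumably inherited throughout Section~\ref{exp-conv-sec}; should one wish to avoid it, the integration-by-parts can be replaced by a dyadic decomposition of the $a$-points over the $\varphi$-level sets $\{k:2^{j}\leq\varphi(r_k)<2^{j+1}\}$, producing the same conclusion without any smoothness on $\varphi$.
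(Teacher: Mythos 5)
Your argument is correct and follows the paper's route: the same Riemann--Stieltjes/Abel bridge identity between the series $\sum_k\varphi(r_k)^{-\mu}$ and the integral $\int\varphi'(t)n(t)\varphi(t)^{-\mu-1}\,dt$, combined with Lemma~\ref{convergent-lemma}. The only cosmetic difference is that you prove $\rho_\varphi(n)\le\lambda_\varphi$ via a direct head/tail comparison rather than by first translating series convergence into integral convergence through the bridge identity and then invoking Lemma~\ref{convergent-lemma}, and you make the vanishing of the boundary term somewhat more explicit; the substance is identical, including the (tacit in the lemma statement) requirement that $\varphi$ be differentiable.
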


\begin{proof}
Let $\{z_n\}$ be the sequence of $a$-points of $f$ listed according to multiplicities and ordered by increasing modulus. Denote $n(r)=n(r,a,f)$ for short. By Riemann-Stieltjes integration and integration by parts,
	\begin{equation}\label{both}
	\sum_{r_0<|z_n|<R}\frac{1}{\varphi(|z_n|)^\mu} = \int_{r_0}^R\frac{dn(t)}{\varphi(t)^\mu}
	=\left[\frac{n(t)}{\varphi(t)^\mu}\right]^R_{r_0}+\mu\int_{r_0}^R\frac{\varphi'(t)n(t)}{\varphi(t)^{\mu+1}}\, dt.
	\end{equation}
Suppose that the sum on the left converges as $R\to\infty$. Since the first expression on the
right cannot diverge to $-\infty$ as $R\to\infty$, the non-negative integral on the right must converge as $R\to\infty$. Conversely, suppose
that the integral on the right converges as $R\to\infty$. Since
	\begin{equation}\label{varphi-infinity}
	\int_{r_0}^\infty \frac{\varphi'(t)}{\varphi(t)}\, dt=\infty,
	\end{equation}
we must have $\frac{n(t)}{\varphi(t)^\mu}\to 0$ as $t\to\infty$. Hence the sum on the left converges as $R\to\infty$. We have proved that, as $R\to\infty$, the sum on the left-hand side and the integral on the right-hand side of \eqref{both} converge
or diverge at the same time. The assertion now follows from Lemma~\ref{convergent-lemma} and the
definition of the $\varphi$-exponent of convergence.
\end{proof}

\begin{remark}\label{genus=0}
For a meromorphic function $f$ of finite $\varphi$-order, the global assumption \eqref{global-rho<1} yields $\lambda\leq \rho(f)<1$, where $\lambda$ is the classical exponent of convergence of the $a$-points of $f$. Therefore,
    \begin{equation}\label{n-r-usual-ex}
	n(r)=n(r,a,f)=o(r),\quad a\in\widehat{\C}.
    \end{equation}
 \end{remark}

We note that Lemma~\ref{n-lemma} could also be proved by modifying the proof of \cite[Theorem~3.1]{Chern}. The details are omitted.

We proceed to compare the $\varphi$-orders of
	$
	n(r)=n(r,a,f)
	$
	and
	\mbox{$
	 N(r)=N(r,a,f),
	$}
and aim to find a continuum between the logarithmic order and the classical order in \eqref{logorder-N-n} and \eqref{order-N-n}.
In the general case, the constants $\alpha_{\varphi,s}$, $\beta_\varphi$, $\gamma_{\varphi,s}$ in \eqref{liminf} and \eqref{L}
play a critical role. We give two results of which the first one is very
straight-forward and reduces to \eqref{logorder-N-n} when choosing $s(r)=r^2$ and $\varphi(r)=\log r$
and to \eqref{order-N-n} when choosing $s(r)=2r$ and $\varphi(r)=r$.

\begin{lemma}\label{N-lemma}
Suppose that $f$ is a non-constant meromorphic function and of finite
$\varphi$-order. Let $\alpha_{\varphi,s}$, $\beta_\varphi$ and $\gamma_{\varphi,s}$ be the constants in \eqref{liminf} and \eqref{L}. Then the $\varphi$-orders of $n(r)=n(r,a,f)$ and $N(r)=N(r,a,f)$ satisfy
	\begin{align}
	\rho_\varphi(N(r)) &\leq
	\rho_\varphi(n(r))+\beta_\varphi,\label{up}\\
	\rho_\varphi(N(r)) &\geq \alpha_{\varphi,s}\rho_\varphi(n(r))
	+\alpha_{\varphi,s} \gamma_{\varphi,s}.\label{down}
	\end{align}
\end{lemma}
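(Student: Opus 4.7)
The plan is to derive both bounds from the elementary integral relation
$$N(r) = \int_0^r \frac{n(t)-n(0)}{t}\, dt + n(0)\log r,$$
which links $n(r)$ and $N(r)$ in the same spirit as the Riemann-Stieltjes manipulation in the proof of Lemma~\ref{n-lemma}. In particular, no subadditivity or convexity hypotheses should be needed; the whole statement should fall out from monotonicity of $\varphi(r)$, $n(r)$, $N(r)$ together with $\limsup/\liminf$ bookkeeping.

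For the upper bound \eqref{up}, fix $\varepsilon>0$. Using the definition of $\rho_\varphi(n)$ we have $n(t)\leq \varphi(t)^{\rho_\varphi(n)+\varepsilon}$ for $t$ large, and then monotonicity of $\varphi(r)$ yields
$$N(r)\leq O(1)+\int_{r_\varepsilon}^r \frac{\varphi(t)^{\rho_\varphi(n)+\varepsilon}}{t}\, dt \leq O(1)+\varphi(r)^{\rho_\varphi(n)+\varepsilon}\log r.$$
Taking logarithms, dividing by $\log\varphi(r)$, and letting $r\to\infty$ gives
$$\rho_\varphi(N)\leq \rho_\varphi(n)+\varepsilon+\limsup_{r\to\infty}\frac{\log\log r}{\log\varphi(r)}=\rho_\varphi(n)+\varepsilon+\beta_\varphi,$$
after which $\varepsilon\to 0^+$ closes the argument. (If $\rho_\varphi(n)=\infty$ the inequality is vacuous.)

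For the lower bound \eqref{down}, replace $2r$ by $s(r)$ in \eqref{N-2r geq n-r} to obtain the stronger estimate
$$N(s(r))-N(r)=\int_r^{s(r)}\frac{n(t)}{t}\, dt\geq n(r)\log\frac{s(r)}{r},$$
so that $\log n(r)\leq \log N(s(r))-\log\log\frac{s(r)}{r}$. Inserting $N(s(r))\leq \varphi(s(r))^{\rho_\varphi(N)+\varepsilon}$, dividing by $\log\varphi(r)$, and applying the elementary inequality $\limsup(A_r-B_r)\leq \limsup A_r-\liminf B_r$ together with the reciprocal identity
$$\limsup_{r\to\infty}\frac{\log\varphi(s(r))}{\log\varphi(r)}=\frac{1}{\alpha_{\varphi,s}}$$
and the definition of $\gamma_{\varphi,s}$, I obtain
$$\rho_\varphi(n)\leq \frac{\rho_\varphi(N)+\varepsilon}{\alpha_{\varphi,s}}-\gamma_{\varphi,s}.$$
Rearranging and letting $\varepsilon\to 0^+$ yields \eqref{down}. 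The one bookkeeping point to be careful about is the boundary case $\alpha_{\varphi,s}=0$, where the reciprocal identity forces the first limsup to be $+\infty$; however, by the global assumption \eqref{global-s/r} we have $\gamma_{\varphi,s}\geq 0$, so the right-hand side of \eqref{down} becomes $0$ and the bound is trivial. This is the only real subtlety I anticipate; everything else is routine logarithmic estimation.
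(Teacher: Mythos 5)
Your proof is correct and follows the same route as the paper's: the upper bound rests on $N(r)\lesssim n(r)\log r+\log r$, and the lower bound on $N(s(r))\geq n(r)\log\frac{s(r)}{r}$, which is exactly the decomposition the paper uses (cf.\ \eqref{N leq n log r} and \eqref{N geq n}). The only difference is cosmetic: the paper divides the lower-bound inequality by $\log\varphi(s(r))$ and factors out $\frac{\log\varphi(r)}{\log\varphi(s(r))}$ before passing to $\limsup$, while you isolate $\log n(r)$ and invoke the reciprocal identity $\limsup\frac{\log\varphi(s(r))}{\log\varphi(r)}=1/\alpha_{\varphi,s}$; your observation that no subadditivity or convexity of $\varphi$ is required here, and your handling of the degenerate case $\alpha_{\varphi,s}=0$, are both consistent with the paper.
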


\begin{proof}
Since
	\begin{equation}\label{N leq n log r}
	N(r)=\int_1^{r}\frac{n(t)}{t}\, dt+O(\log r)
	\leq n(r)\log r+O(\log r),
	\end{equation}
it follows that
	$$
	\frac{\log N(r)}{\log\varphi (r)}\leq \frac{\log n(r)}{\log\varphi (r)}
	+\frac{\log\log r}{\log\varphi (r)}+o(1).
	$$
This yields \eqref{up}. On the other hand,
   \begin{equation}\label{N geq n}
	N(s(r),f)\geq\int_r^{s(r)}\frac{n(t)}{t}\, dt
	\geq n(r)\log\frac{s(r)}{r},
	\end{equation}
so that
	$$
	\frac{\log N(s(r))}{\log \varphi (s(r))}\geq
	\frac{\log\varphi(r)}{\log\varphi(s(r))}\cdot\left(\frac{\log n(r)}{\log\varphi(r)}
	+\frac{\log\log\frac{s(r)}{r}}{\log\varphi(r)}\right).
	$$
This yields \eqref{down}.
\end{proof}

\begin{remark}\label{N=n+beta}
(a)\,If $\beta_\varphi=0$, $\varphi(r)$ is subadditive and we take $s(r)=2r$, then $\alpha_{\varphi,s}=1$ and $\beta_\varphi=\gamma_{\varphi,s}=0$ by Remark~\ref{ilpo}(b).
 If $\beta_\varphi>0$, the limits
    $
    \lim_{r\to\infty}\frac{\log \varphi(r)}{\log \varphi(r^2)}
    $
     and \eqref{lim-gamma=beta} exist, and we take $s(r)=r^2$, then $\alpha_{\varphi,s}=1$ and $\beta_\varphi=\gamma_{\varphi,s}$ by  Remark~\ref{ilpo}(b).
  Thus,
    $$
    \rho_{\varphi}(N(r))=\rho_{\varphi}(n(r))+\beta_\varphi
    $$
    follows from Lemma~\ref{N-lemma}  in both cases.
    
\vskip 2mm
    (b)\,Suppose that $\varphi(r)$ is subadditive and that $\limsup_{r\to\infty}\frac{s(r)}{r}<\infty$. By Remark~\ref{gamma_s<0}(a), we have $\alpha_{\varphi,s}=1$ and $\gamma_{\varphi,s}=0$. It follows from Lemma~\ref{N-lemma} that
    $$
    \rho_\varphi(n(r))\leq \rho_\varphi(N(r)) \leq  \rho_\varphi(n(r))+\beta_\varphi.
    $$
\end{remark}
We obtain the following result  directly from Lemmas~\ref{n-lemma} and~\ref{N-lemma}.
   \begin{corollary}\label{rho>alp.lamb-alp.gam}
   Suppose that $f$ is a non-constant meromorphic function and of finite $\varphi$-order. Let $\alpha_{\varphi,s}$, $\beta_\varphi$ and $\gamma_{\varphi,s}$ be the constants in \eqref{liminf} and \eqref{L}, and let  $\lambda_{\varphi}\geq 0$ be the $\varphi$-exponent of convergence of the
$a$-points of $f$, where $a$ is arbitrary. Then we have
   \begin{equation}\label{coro-rho}
   \rho_\varphi(f)\geq \alpha_{\varphi,s}\lambda_\varphi+\alpha_{\varphi,s}\gamma_{\varphi,s}.
   \end{equation}
\end{corollary}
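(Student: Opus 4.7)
The plan is to chain together Lemmas~\ref{n-lemma} and~\ref{N-lemma} with Nevanlinna's first fundamental theorem. Let $a\in\widehat{\mathbb{C}}$ be the value whose $\varphi$-exponent of convergence is $\lambda_\varphi$, and write $n(r)=n(r,a,f)$ and $N(r)=N(r,a,f)$ as in the preceding results.

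First, I would apply Lemma~\ref{n-lemma} to identify the $\varphi$-order of the counting function with the $\varphi$-exponent of convergence, giving $\rho_\varphi(n(r))=\lambda_\varphi$. Next, I would feed this identification into the lower bound \eqref{down} of Lemma~\ref{N-lemma}, which yields
\begin{equation*}
\rho_\varphi(N(r))\geq \alpha_{\varphi,s}\rho_\varphi(n(r))+\alpha_{\varphi,s}\gamma_{\varphi,s}
=\alpha_{\varphi,s}\lambda_\varphi+\alpha_{\varphi,s}\gamma_{\varphi,s}.
\end{equation*}

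Finally, I would use Nevanlinna's first fundamental theorem, which gives $N(r,a,f)\leq T(r,f)+O(1)$ for every $a\in\widehat{\mathbb{C}}$ (the case $a=\infty$ is immediate from $T(r,f)=m(r,f)+N(r,f)$, and for finite $a$ it follows from $T(r,1/(f-a))=T(r,f)+O(1)$). Since the $\varphi$-order is monotone in the underlying function and invariant under bounded additive perturbations, this forces $\rho_\varphi(N(r))\leq \rho_\varphi(T(r,f))=\rho_\varphi(f)$. Concatenating the two inequalities produces \eqref{coro-rho}.

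There is no real obstacle here: all nontrivial content has already been absorbed into Lemmas~\ref{n-lemma} and~\ref{N-lemma}. The only minor point worth noting is that the corollary is genuinely vacuous (or rather, trivial) if $\alpha_{\varphi,s}=0$ or $\gamma_{\varphi,s}=0$, so the content lies in regimes where the growth parameters are both strictly positive—this is exactly the situation discussed in Section~\ref{general-parameters-sec} under the global assumption \eqref{global-s/r}.
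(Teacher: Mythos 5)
Your proof is correct and follows exactly the chain the paper has in mind when it says the corollary follows ``directly from Lemmas~\ref{n-lemma} and~\ref{N-lemma}'': identify $\rho_\varphi(n(r))=\lambda_\varphi$ via Lemma~\ref{n-lemma}, apply the lower bound \eqref{down}, and dominate $\rho_\varphi(N(r))$ by $\rho_\varphi(f)$ via the first fundamental theorem. One small quibble with your closing aside: the statement is not trivial when $\gamma_{\varphi,s}=0$ (only the second summand vanishes), since $\rho_\varphi(f)\geq\alpha_{\varphi,s}\lambda_\varphi$ still requires \eqref{down} and is a genuine constraint whenever $\alpha_{\varphi,s}>0$.
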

 The equality in \eqref{coro-rho} may hold even if the right-hand side is strictly positive. For example, if $\varphi(r)=\log r$ and $s(r)=r^2$, then $\alpha_{\varphi,s}=\gamma_{\varphi,s}=1$. Moreover, there exists a transcendental meromorphic function $f$ with $\rho_\varphi(f)=\lambda_\varphi+1=1$ by \cite[Theorem~ 7.3]{Chern}.

The second result on the $\varphi$-orders of $n(r)=n(r,a,f)$ and $N(r)=N(r,a,f)$  does not rely on $s(r)$ but only on $\varphi(r)$. It reduces to \eqref{logorder-N-n} when $\varphi(r)=\log r$ because the function $\psi_{\mu}(t)$ in \eqref{psi} below is then of the form $\psi_{\mu}(t)=\frac{\mu+1}{t\log t}$.

\begin{lemma}\label{N-lemma-0}
Suppose that $f$ is a non-constant meromorphic function such that $\rho_\varphi(n(r))\in [0,\infty]$ and that $\varphi(r)$ is concave and twice differentiable. Let $\tau\in [0,1]$, and suppose further that
for $\mu\in\R$, there
there exist constants $C_1(\mu)>0$ and $C_2(\mu)>0$ such that the function
	\begin{equation}\label{psi}
	\psi_\mu(t)=(\mu+1)\frac{\varphi'(t)}{\varphi(t)}-\frac{1}{t}-\frac{\varphi''(t)}{\varphi'(t)}
	\end{equation}
satisfies
	\begin{equation}\label{varphi2}
	0<C_1(\mu)\varphi(t)^{-\tau}\leq \psi_\mu(t)t\leq C_2(\mu)\varphi(t)^{-\tau}<\infty,\quad t\geq 0.
	\end{equation}
Then $\rho_\varphi(N(r))= \rho_\varphi(n(r))+\tau.$
\end{lemma}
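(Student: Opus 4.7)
The plan is to use Lemma~\ref{convergent-lemma} to reduce the claim to comparing the convergence, as $R\to\infty$, of the two tail integrals
\[
I_n(\mu;R)=\int_{r_0}^{R}\frac{\varphi'(t)\,n(t)}{\varphi(t)^{\mu+1}}\,dt \quad\text{and}\quad I_N(\mu;R)=\int_{r_0}^{R}\frac{\varphi'(t)\,N(t)}{\varphi(t)^{\mu+1}}\,dt,
\]
whose boundedness characterises $\mu>\rho_\varphi(n)$ and $\mu>\rho_\varphi(N)$ respectively. The crucial observation is that $\psi_\mu$ appears naturally as a logarithmic derivative: a direct computation shows that $F(t):=t\varphi'(t)/\varphi(t)^{\mu+1}$ satisfies $F'(t)=-\varphi'(t)\,t\,\psi_\mu(t)/\varphi(t)^{\mu+1}$. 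Since $dN(t)=(n(t)/t)\,dt$ away from the origin, Riemann--Stieltjes integration by parts applied to $\int_{r_0}^{R} F\,dN$ yields the identity
\[
I_n(\mu;R) \;=\; F(R)N(R)-F(r_0)N(r_0)+\int_{r_0}^{R}\frac{\varphi'(t)\,t\,\psi_\mu(t)\,N(t)}{\varphi(t)^{\mu+1}}\,dt,
\]
in which every term apart from the constant $-F(r_0)N(r_0)$ is non-negative. The two-sided hypothesis \eqref{varphi2} sandwiches the last integral between $C_1(\mu)\,I_N(\mu+\tau;R)$ and $C_2(\mu)\,I_N(\mu+\tau;R)$, thereby producing the shift of exponent by $\tau$ I need.

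For the direction $\rho_\varphi(N)\leq\rho_\varphi(n)+\tau$, take any $\mu>\rho_\varphi(n)$. Lemma~\ref{convergent-lemma} makes $I_n(\mu;R)$ bounded in $R$; positivity of the remaining three terms then forces $I_N(\mu+\tau;R)$ to be bounded, so Lemma~\ref{convergent-lemma} applied to $N$ gives $\rho_\varphi(N)\leq\mu+\tau$. Letting $\mu\downarrow\rho_\varphi(n)$ completes this direction, with the case $\rho_\varphi(n)=\infty$ being vacuous.

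For the reverse $\rho_\varphi(N)\geq\rho_\varphi(n)+\tau$ the delicate point is the boundary term $F(R)N(R)$. Concavity alone only gives $t\varphi'(t)\leq\varphi(t)$, which is not sharp enough. To upgrade this, I apply \eqref{varphi2} at the two consecutive exponents $\mu$ and $\mu-1$: the difference $\psi_\mu(t)t-\psi_{\mu-1}(t)t$ equals precisely $t\varphi'(t)/\varphi(t)$, and since $\psi_{\mu-1}(t)t\geq C_1(\mu-1)\varphi(t)^{-\tau}>0$, one obtains
\[
\frac{t\varphi'(t)}{\varphi(t)}\;\leq\;\psi_\mu(t)\,t\;\leq\;C_2(\mu)\,\varphi(t)^{-\tau},
\]
so $t\varphi'(t)\lesssim\varphi(t)^{1-\tau}$ and therefore $F(R)\lesssim\varphi(R)^{-(\mu+\tau)}$. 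For any $\mu>\rho_\varphi(N)-\tau$ this forces $F(R)N(R)\lesssim N(R)/\varphi(R)^{\mu+\tau}\to 0$, while Lemma~\ref{convergent-lemma} makes $I_N(\mu+\tau;R)$ bounded. The identity then forces $I_n(\mu;R)$ to be bounded, whence $\rho_\varphi(n)\leq\mu$; sending $\mu\downarrow\rho_\varphi(N)-\tau$ finishes the proof. If instead $\rho_\varphi(n)=\infty$ then $I_n(\mu;R)\to\infty$ for every $\mu$, and the same sharpened bound on $F$ converts this divergence into $\rho_\varphi(N)\geq\mu+\tau$ for arbitrary $\mu$, i.e.\ $\rho_\varphi(N)=\infty$. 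The main obstacle is precisely extracting the improved bound $t\varphi'(t)\lesssim\varphi(t)^{1-\tau}$ from \eqref{varphi2}; without it one would only recover the weaker chain $\rho_\varphi(n)\leq\rho_\varphi(N)\leq\rho_\varphi(n)+\tau$.
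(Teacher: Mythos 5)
Your proof is correct and takes a genuinely different, more unified route than the paper's. Both start from the same Riemann--Stieltjes integration-by-parts identity \eqref{second-integration} and invoke Lemma~\ref{convergent-lemma} to translate the claim into comparing convergence of the tail integrals $I_n(\mu;\cdot)$ and $I_N(\mu+\tau;\cdot)$; the forward inequality $\rho_\varphi(N)\leq\rho_\varphi(n)+\tau$ is then obtained in essentially the same way by both. The essential difference lies in the reverse inequality. You extract from \eqref{varphi2} the sharpened pointwise bound $t\varphi'(t)\lesssim\varphi(t)^{1-\tau}$ by observing that $\psi_\mu(t)t-\psi_{\mu-1}(t)t=t\varphi'(t)/\varphi(t)$ and combining the lower bound at $\mu-1$ with the upper bound at $\mu$. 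This immediately yields $F(R)N(R)\lesssim N(R)\,\varphi(R)^{-(\mu+\tau)}\to 0$ whenever $\mu+\tau>\rho_\varphi(N)$, so the boundary term dies and the identity directly transfers convergence, handling $\sigma\in(0,\infty)$, $\sigma=0$ and $\sigma=\infty$ in one stroke. The paper never derives this sharper estimate; it works only with the weaker concavity bound $t\varphi'(t)\leq\varphi(t)$, and as a consequence must split the argument into three cases, with further sub-cases $\mu\in[-1,0)$ and $\mu<-1$ when $\sigma=0$, and a somewhat delicate contradiction step in each. Your approach is therefore shorter and makes transparent exactly which property of $\varphi$ drives the shift by $\tau$. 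One small polish is advisable: since the sharpened bound is a pointwise estimate on $\varphi$ alone, derive it once from a fixed admissible pair such as $\mu_0=1$ and $\mu_0-1=0$, rather than from the running $\mu$ and $\mu-1$ of the main argument; this avoids any concern about whether $\mu-1$ lies in the range where \eqref{varphi2} can actually hold (for instance, when $\varphi(r)=\log r$ the quantity $\psi_{\mu-1}(t)t$ is nonpositive once $\mu\leq 0$).
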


\begin{proof}
Without loss of generality,
we may suppose that $\varphi(r)$ is defined on $[0,\infty)$ and satisfies $\varphi(0)=0$, see the discussion below the 
formula \eqref{subadditive}.

Set $\rho_{\varphi}(n(r))=\sigma$ and let $\mu\in\R$. Integrating by parts,
	\begin{equation}\label{second-integration}
	\begin{split}
	\int_{r_0}^R\frac{\varphi'(t)n(t)}{\varphi(t)^{\mu+1}}\, dt
	&=\int_{r_0}^R\frac{t\varphi'(t)}{\varphi(t)^{\mu+1}}\frac{n(t)}{t}\, dt\\
	&=\left[\frac{t\varphi'(t)N(t)}{\varphi(t)^{\mu+1}}\right]^R_{r_0}
	+\int_{r_0}^R\psi_\mu(t)t
	\frac{\varphi'(t)N(t)}{\varphi(t)^{\mu+1}}\, dt,
	\end{split}	
	\end{equation}
where $\psi_\mu(t)$ is given in \eqref{psi}, and it follows that
\begin{equation}\label{mu}
\int_{r_0}^R\psi_\mu(t)\,dt=\log\frac{\varphi(R)^{\mu+1}}{R\varphi'(R)}+C(\mu,r_0),
\end{equation}
where $C(\mu,r_0)$ is a constant.
 From \eqref{varphi2}, we find that the integral on the
right-hand side of \eqref{second-integration} converges as $R\to\infty$ if and only if
	\begin{equation}\label{alpha0}
	\int_{r_0}^\infty\frac{\varphi'(t)N(t)}{\varphi(t)^{\mu+\tau+1}}\, dt<\infty.
	\end{equation}
Let $\mu>\sigma$. Then the integral on the left-hand side of \eqref{second-integration} converges as $R\to\infty$ by Lemma~\ref{convergent-lemma}. Since the first expression on the right-hand side of \eqref{second-integration} cannot diverge to $-\infty$ as $R\to\infty$, the non-negative integral on the right-hand side must converge as $R\to\infty$, and thus \eqref{alpha0} holds for $\mu>\sigma$.

(1) Suppose first that $0<\sigma<\infty$ and let $0<\mu<\sigma$. Then the integral on the left-hand side of \eqref{second-integration}
diverges as $R\to\infty$ by Lemma~\ref{convergent-lemma}. Suppose that \eqref{alpha0} holds, and aim for a contradiction.
As $\varphi(r)$ is differentiable and concave, its derivative is non-increasing,
and so, using $\varphi(0)=0$,
	\begin{equation*}\label{derivative}
	\varphi(r)=\int_0^r\varphi'(t)\, dt\geq r\varphi'(r).
	\end{equation*}
Since $\mu>0$, inserting the formula above into \eqref{mu} leads to
	\begin{equation*}
	\begin{split}
	\int_{r_0}^R\psi_\mu(t)\, dt\geq \mu\log\varphi(R)+	C(\mu,r_0) \to\infty,\quad R\to\infty.
	\end{split}	
	\end{equation*}
Combining this with \eqref{alpha0}, we conclude that
$\frac{R\varphi'(R)N(R)}{\varphi(R)^{\mu+1}}\to 0$ as $R\to\infty$. Then the ~right-hand side of \eqref{second-integration} remains bounded as $R\to\infty$, which is a contradiction.
Thus, the integral in \eqref{alpha0} converges
for $\mu>\sigma$ and diverges for $\mu<\sigma$. The assertion $\rho_\varphi(N(r))= \sigma+\tau$
now follows from Lemma~\ref{convergent-lemma}.

(2) Suppose then that $\sigma=0$. Our task is to prove that $\rho_\varphi(N(r))=\tau$. Since  \eqref{alpha0} holds for $\mu>\sigma=0$, the assertion follows from Lemma~\ref{convergent-lemma} if  the integral in \eqref{alpha0} diverges for $\mu<0$.

First, suppose that $\frac{\varphi(R)^{\mu+1}}{R\varphi'(R)}$ is bounded for $-1\leq \mu<0$ as $R\to\infty$, and aim for a contradiction.
  Divide both sides of \eqref{varphi2} by $t>0$ and integrate from 
$r_0$ to $r$, then
	\begin{equation}\label{remark-asymptotic}
	\log\frac{\varphi(r)^{\mu+1}}{r\varphi'(r)}\asymp \int_{r_0}^r \frac{dt}{t\varphi(t)^\tau},
	\quad r\to\infty,
	\end{equation}
where the comparison constants may depend on $\mu$. Since the right-hand side of \eqref{remark-asymptotic} is positive, then 
    \begin{equation}\label{1<<C}
    1<\frac{\varphi(r)^{\mu+1}}{r\varphi'(r)}\leq C, \quad r\geq R_0,
    \end{equation}
  holds for some $C>1$. Thus, both sides of \eqref{remark-asymptotic} are bounded for $r\geq R_0$, which gives us $\tau>0$.
 Indeed, if $\tau=0$, then the right-side hand of \eqref{remark-asymptotic} is unbounded as $r\to\infty$, which is a contradiction.  We consider the following two subcases separately.
 \begin{itemize}
\item[\textnormal{(i)}]  If $-1<\mu<0$, it follows from  \eqref{general-restriction}  that
    $
   \varphi(r)^{\mu+1}\leq r^{\mu+1}$ for $r\geq R_0$,
     thus $\varphi(r)^{\mu+1}=o(r)$ as $r\to\infty$. Then for $r\geq R_0$, 
    $
    \varphi(r)<\varepsilon  r^{\frac{1}{^{\mu+1}}}
    $
    holds, which leads to
    $$
    \int_{R_0}^\infty \frac{dt}{t\varphi(t)^\tau}\geq\frac{1}{\varepsilon^\tau}\int_{R_0}^\infty t^{-\frac{\tau}{\mu+1}-1}dt=\frac{1}{\varepsilon^\tau}C(\mu,R_0,\tau),
	\quad r\geq R_0,
    $$
where $C(\mu,R_0,\tau)>0$ is a constant. Letting $\varepsilon\to 0^+$,  we get a contradiction.

 \item[\textnormal{(ii)}] If $\mu=-1$, then \eqref{1<<C} yields $\varphi'(r)<1/r$ for $r\geq R_0$. Hence, 
    \begin{equation*}
     \begin{split}
   \varphi(r)&=\int_{R_0}^r\varphi'(t)dt+\varphi(R_0) \leq\int_{R_0}^r\frac{dt}{t}+\varphi(R_0)
  \leq 2\log r
    \end{split}
    \end{equation*}
for all $r\geq R_0$. It follows from $0<\tau\leq 1$ that 
    $$
    \int_{R_0}^r \frac{dt}{t\varphi(t)^\tau}\geq  \int_{R_0}^r \frac{dt}{t(2\log t)^\tau}  \to\infty,\quad r\to\infty,
    $$
which is a contradiction.
 \end{itemize}
  Thus $\frac{\varphi(R)^{\mu+1}}{R\varphi'(R)}$ is not bounded for $-1\leq \mu<0$ as $R\to\infty$, and hence the integral in \eqref{mu} diverges  as $R\to\infty$. We may proceed as in Case (1) to conclude that 
the integral in \eqref{alpha0} diverges for $-1\leq \mu<0$.

Next, we proceed to prove that the integral in \eqref{alpha0} diverges for $\mu<-1$. Suppose on the contrary that \eqref{alpha0} holds, then 
    \begin{equation*}
    \begin{split}
    \infty>\int_{r_0}^\infty\frac{\varphi'(t)N(t)}{\varphi(t)^{\mu+\tau+1}}\, dt\geq N(r_0)\int_{r_0}^\infty \varphi(t)^{-\mu-\tau-1}\varphi'(t)\, dt,
    \end{split}
    \end{equation*}    
    which gives us $\tau+\mu>0$. We note that $\mu<-1$, then $\tau>-\mu>1$, which violates the assumption $\tau\leq 1$. Therefore, the integral in \eqref{alpha0} diverges for $\mu<0$.   
 Then the assertion $\rho_\varphi(N(r))=\tau$ follows.

(3) Finally suppose that $\sigma=\infty$. By \eqref{N-2r geq n-r} and the subadditivity of $\varphi(r)$, guaranteed by \eqref{subadditive}, we have in general $\rho_\varphi(n(r))\leq \rho_\varphi(N(r))$, see the proof of Lemma~\ref{q-difference-lemma}(b).
Since $\sigma=\infty$, the assertion follows.
\end{proof}


\subsection{Minimal growth of canonical products}

Choosing $s(r)=2r$ and $\varphi(r)=\log r$, we find that \eqref{assumption-limsup} holds
for every $\lambda\in [0,\infty)$. Thus Lemma~\ref{canonical} below reduces to \cite[Lemma~3.3]{ZT} in
this case. Concerning the assumption \eqref{assumption-limsup<liminf}, we find by
\cite[Theorem~II]{Taylor} that
	\begin{equation*}\label{hospital}
	\liminf_{r\to\infty}\frac{\varphi'(s(r))s'(r)r}{\varphi(s(r))}
	\leq \limsup_{r\to\infty}\frac{\log\varphi(s(r))}{\log r}\leq 2.
	\end{equation*}
Here we have used the facts that $\varphi(r)\leq r$ and $s(r)\leq r^2$. Hence the inequality
in \eqref{assumption-limsup<liminf} does not occur if $\lambda<1/2$ but may occur if $\lambda\geq 1/2$.
The latter claim is easy to see by choosing $s(r)=2r$ and $\varphi(r)=r^\beta$ for $\beta\in [1/2,1]$.

\begin{lemma}\label{canonical}
Let $P(z)$ be a canonical product formed with the sequence $\{z_n\}$
that has a finite $\varphi$-exponent of convergence $\lambda\geq 0$, and let $\varphi(r)$
and $s(r)$ be differentiable such that \eqref{Young-condition} holds
and that one of \eqref{assumption-limsup} or \eqref{assumption-limsup<liminf} holds.
If $z$ lies outside of the discs $D_n=\left\{z:|z-z_n|\leq \frac{1}{r_n^{\lambda+\varepsilon}}\right\}$, where $r_n=|z_n|$, then
	$$
	\log\frac{1}{|P(z)|}=O\left(\varphi(s(r))^{\lambda+\varepsilon}\log r\right),\quad |z|=r.
	$$
\end{lemma}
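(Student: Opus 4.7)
The plan is to expand $P(z)$ as a genus-zero Weierstrass product, split $\sum_n\log|1-z/z_n|$ at $r_n=s(r)$, and estimate the two resulting sums using the exclusion of $z$ from the discs $D_n$ together with the counting estimate from Lemma~\ref{n-lemma}.

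By the global assumption \eqref{global-rho<1} and Remark~\ref{genus=0}, the classical exponent of convergence of $\{z_n\}$ is strictly less than one. Hence $P(z)=\prod_n(1-z/z_n)$ and Lemma~\ref{n-lemma} yields $n(r)\lesssim\varphi(r)^{\lambda+\varepsilon}$. For the near part $r_n\le s(r)$, I combine $\log|1-z/z_n|=\log|z-z_n|-\log r_n$ with the hypothesis $z\notin D_n$, that is $|z-z_n|>r_n^{-(\lambda+\varepsilon)}$, to obtain the termwise bound $-\log|1-z/z_n|\le(\lambda+\varepsilon+1)\log r_n$. Summing and using $\log s(r)\le 2\log r$ from \eqref{assumption},
$$
\sum_{r_n\le s(r)}\Bigl(-\log\left|1-\frac{z}{z_n}\right|\Bigr)
\lesssim n(s(r))\log s(r)\lesssim\varphi(s(r))^{\lambda+\varepsilon}\log r,
$$
which is already of the claimed order and accounts for the $\log r$ factor.

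For the far part $r_n>s(r)$, the global assumption \eqref{global-s/r} gives $r/r_n\le 1/c<1$ for some $c>1$, so the elementary bound $\bigl|\log|1-w|\bigr|\lesssim|w|$ valid on $|w|\le 1/c$ yields $\bigl|\log|1-z/z_n|\bigr|\lesssim r/r_n$. Riemann--Stieltjes integration by parts against $dn(t)$, together with $n(t)/t\to 0$ (a consequence of $\rho(f)<1$), reduces this to
$$
\sum_{r_n>s(r)}\bigl|\log|1-z/z_n|\bigr|
\lesssim n(s(r))+r\int_{s(r)}^\infty\frac{n(t)}{t^2}\,dt,
$$
and Lemma~\ref{n-lemma} replaces $n(t)$ by $\varphi(t)^{\lambda+\varepsilon}$ in the integrand.

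The main obstacle is then the tail integral $r\int_{s(r)}^\infty\varphi(t)^{\lambda+\varepsilon}/t^2\,dt$, and this is precisely where \eqref{Young-condition} and the dichotomy \eqref{assumption-limsup}--\eqref{assumption-limsup<liminf} enter. The substitution $t=s(u)$ combined with \eqref{Young-condition}, which forces $s'(u)/s(u)^2$ to be comparable to $1/u^2$, transforms the integral into the form $r\int_r^\infty F(u)/u^2\,du$ with $F(u)=\varphi(s(u))^{\lambda+\varepsilon}$. A direct computation gives $uF'(u)/F(u)=(\lambda+\varepsilon)u\varphi'(s(u))s'(u)/\varphi(s(u))$, which is exactly the ratio appearing in the two hypotheses. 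Under \eqref{assumption-limsup} this ratio is eventually at most $1-\delta$ for some $\delta>0$ (after shrinking $\varepsilon$), so $F(u)/u^{1-\delta}$ is eventually non-increasing and a direct comparison yields $r\int_r^\infty F(u)/u^2\,du\lesssim F(r)=\varphi(s(r))^{\lambda+\varepsilon}$. Under the complementary assumption \eqref{assumption-limsup<liminf}, where the ratio is bounded below, one uses integration by parts in the opposite direction together with the crude bound $\varphi(u)\le u$ from \eqref{general-restriction} to still arrive at a bound of order $\varphi(s(r))^{\lambda+\varepsilon}\log r$. Adding the two contributions yields $\log(1/|P(z)|)=O(\varphi(s(r))^{\lambda+\varepsilon}\log r)$ as claimed.
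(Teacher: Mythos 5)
Your decomposition is the same as the paper's: split $\log(1/|P(z)|)$ at $r_n=s(r)$, bound the near sum by the disc-exclusion together with $n(s(r))\lesssim\varphi(s(r))^{\lambda+\varepsilon}$, and bound the far sum by the elementary estimate $|\log|1-z/z_n||\lesssim r/r_n$ followed by Riemann--Stieltjes integration, so that the crux is the tail integral $r\int_{s(r)}^\infty n(t)/t^2\,dt$. Your handling of the case \eqref{assumption-limsup} is correct and is a clean equivalent of the paper's argument: the paper invokes Taylor's generalized L'H\^opital theorem on $\int_{s(r)}^\infty\varphi(t)^{\lambda+\varepsilon}/t^2\,dt$ against $\varphi(s(r))^{\lambda+\varepsilon}/r$, while you recast the same differential inequality (after shrinking $\varepsilon$) as the monotonicity of $F(u)/u^{1-\delta}$ and integrate directly; both exploit \eqref{Young-condition} in the same way.

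There is a genuine gap in your treatment of the case \eqref{assumption-limsup<liminf}. You have already replaced $n(t)$ by $\varphi(t)^{\lambda+\varepsilon}$ in the integrand before branching into the two cases, and under \eqref{assumption-limsup<liminf} the resulting integral $r\int_r^\infty F(u)/u^2\,du$ with $F(u)=\varphi(s(u))^{\lambda+\varepsilon}$ need not converge: the same computation $uF'(u)/F(u)=(\lambda+\varepsilon)\,u\varphi'(s(u))s'(u)/\varphi(s(u))$ that you exploit now has $\liminf\geq(\lambda+\varepsilon)/\lambda>1$, so $F(u)/u$ is eventually increasing, which is consistent with $F(u)/u^2$ failing to be integrable. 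Your one-line resolution (``integration by parts in the opposite direction together with $\varphi(u)\le u$'') does not repair this, and I see no way for $\varphi(u)\le u$ to save the bound when $\lambda\geq\tfrac12$. The correct move, and the paper's, is to keep $n(t)$ in the tail integral in this case: the global restriction \eqref{global-rho<1} forces the classical exponent of convergence of $\{z_n\}$ below one, so $\int_{s(r)}^\infty n(t)/t^2\,dt=O(1)$ and the far sum is $O(r)$; the hypothesis \eqref{assumption-limsup<liminf} is then used \emph{only} to deduce, via \cite[Theorem~II]{Taylor}, that $\liminf_{r\to\infty}\log\varphi(s(r))/\log r\geq 1/\lambda$, hence $r=O(\varphi(s(r))^{\lambda+\varepsilon})$, which turns the $O(r)$ bound into the asserted $O(\varphi(s(r))^{\lambda+\varepsilon})$. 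So replacing $n(t)$ by $\varphi(t)^{\lambda+\varepsilon}$ too early, which is harmless in Case~1, is exactly what breaks Case~2.
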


\begin{proof}
This proof is a modification of the proof of~\cite[Lemma~3.3]{ZT},
which originates from \cite[Theorem~V.19]{Tsuji}. Keeping \eqref{global-rho<1} in mind, we may write
	\begin{align}\label{PP}
	\log\frac{1}{|P(z)|}&=\sum_{n=1}^\infty
	\log\frac{1}{\left|1-\frac{z}{z_n}\right|}
	=\sum_{s(r)\leq r_n}
	\log\frac{1}{\left|1-\frac{z}{z_n}\right|}+
	\sum_{s(r)>r_n}
	\log\frac{1}{\left|1-\frac{z}{z_n}\right|}\nonumber\\
	&=\sum_1+\sum_2.
	\end{align}
If $z$ lies outside of the discs $D_n$, then
	$$
	\log\left|\frac{z_n}{z_n-z}\right|\lesssim \log r_n.
	$$
Together with Lemma~\ref{n-lemma} and $ s(r)\leq r^2$, we obtain
	\begin{equation}\label{sum2}
	\sum_2\lesssim n(s(r))\log s(r)\lesssim \varphi(s(r))^{\lambda+\varepsilon}\log r.
	\end{equation}

Next we proceed to estimate $\sum_1$. It follows from \eqref{n-r-usual-ex} that
	$$
	0\leq n(t)\log\frac{1}{1-r/t}\leq \frac{rn(t)}{t-r}\overset{t\to\infty}{\longrightarrow}0.
	$$
Hence, a standard reasoning yields
	\begin{equation}\label{sum1}
	\begin{split}
	\sum_1 &\leq  \sum_{s(r)\leq r_n}\log\frac{1}{1-r/r_n}=\int_{s(r)}^\infty\log\frac{1}{1-r/t}\, dn(t)\\
	&\leq \int_{s(r)}^\infty \frac{n(t)}{t}\frac{r}{t-r}\, dt
	=O\left(r\int_{s(r)}^\infty\frac{n(t)}{t^2}\, dt\right).
	\end{split}
	\end{equation}

If \eqref{assumption-limsup} is fulfilled, then \cite[Theorem~II]{Taylor} and $r< s(r)$ yield
    \begin{equation*}\label{ass-limsup-1}
    \limsup_{r\to\infty}\frac{\log\varphi(r)}{\log r}\leq\limsup_{r\to\infty}\frac{\log\varphi(s(r))}{\log r}\leq\limsup_{r\to\infty}\frac{\varphi'(s(r))s'(r)r}{\varphi(s(r))}<\frac{1}{\lambda}.
    \end{equation*}
Hence, there exist $\varepsilon>0$, $R>0$ and $k\in(0,1)$ such that for $r>R$,
    \begin{equation}\label{log var(s)/log r}
    \max\left\{\frac{\log\varphi(r)}{\log r},
    \frac{\log\varphi(s(r))}{\log r},
    \frac{\varphi'(s(r))s'(r)r}{\varphi(s(r))}\right\}\leq\frac{k}{\lambda+\varepsilon}.
        \end{equation}
This implies $\int_{s(r)}^\infty\frac{\varphi(t)^{\lambda+\varepsilon}}{t^2}\, dt\to 0$  and $\varphi(s(r))^{\lambda+\varepsilon}=o(r)$,  as $r\to\infty$. Then it follows from \cite[Theorem~II]{Taylor}, \eqref{Young-condition} and  \eqref{log var(s)/log r} that
     \begin{equation}\label{0}
     \begin{split}
   \limsup_{r\to\infty}\frac{\int_{s(r)}^\infty\frac{\varphi(t)^{\lambda+\varepsilon}}{t^2}\, dt}{\frac{\varphi(s(r))^{\lambda+\varepsilon}}{r}}
    &\leq\limsup_{r\to\infty}\frac{-\frac{\varphi(s(r))^{\lambda+\varepsilon}}{s(r)^2}\cdot s'(r)}{\frac{(\lambda+\varepsilon)\varphi(s(r))^{\lambda+\varepsilon-1}\varphi'(s(r))s'(r)r-\varphi(s(r))^{\lambda+\varepsilon}}{r^2}}\\
    &=\limsup_{r\to\infty} \frac{\frac{r^2 s'(r)}{s(r)^2}}{1-\frac{(\lambda+\varepsilon)\varphi'(s(r))s'(r)r}{\varphi(s(r))}}<\infty.
    \end{split}
    \end{equation}
Lemma~\ref{n-lemma} yields $n(t)\leq \varphi(t)^{\lambda+\varepsilon}$, and then from \eqref{sum1} and \eqref{0}, we obtain  $\sum_1=O(\varphi(s(r))^{\lambda+\varepsilon})$. Together with \eqref{PP} and \eqref{sum2}, the assertion follows by assuming \eqref{assumption-limsup}.

If \eqref{assumption-limsup<liminf} is fulfilled, then \cite[Theorem~II]{Taylor} yields
    $
    \liminf_{r\to\infty}\frac{\log\varphi(s(r))}{\log r}\geq\frac{1}{\lambda}.
    $
Thus,
     \begin{equation*}\label{k-4}
     \frac{\log\varphi(s(r))}{\log r}\geq \frac{1}{\lambda+\varepsilon},\quad r>R,
     \end{equation*}
for any $\varepsilon>0$, which implies $r=O(\varphi(s(r))^{\lambda+\varepsilon})$, $r>R$. The global assumption \eqref{n-r-usual-ex} shows that the integral in the upper bound of \eqref{sum1} converges, and thus the conclusion follows from \eqref{PP}, \eqref{sum2} and \eqref{sum1}.
\end{proof}

\begin{remark}\label{radii-of-discs}
(a)\,The radii of the discs $D_n$ in Lemma~\ref{canonical} have a finite sum by the definition of the $\varphi$-exponent of convergence
and the fact that $\varphi(r_n)\leq r_n$.
Hence the collection of discs $D_n$ is an $R$-set by the definition \cite[p.~84]{Laine}.

\vskip 2mm
(b)\,Suppose that $s:[0,\infty)\to [0,\infty)$ is an increasing, differentiable and convex function satisfying
$s(0)=0$ and \eqref{assumption}. It is well-known that such a function $s(r)$ has a representation
	$$
	s(r)=\int_0^r \phi(t)\, dt,
	$$
where $\phi(t)$ is a non-decreasing function. If $\phi(t)$ is unbounded, then $s(r)$ is known as Young's function
\cite[Appendix~A-1]{Neveu}. Following the proof of \cite[Lemma~A-1-1]{Neveu}, we see that the conditions
	\begin{eqnarray}
	&&\limsup_{r\to\infty}\frac{rs'(r)}{s(r)} =\limsup_{r\to\infty}\frac{r\phi(r)}{s(r)}< \infty,\label{1e}\\
	&&\limsup_{r\to\infty}\frac{s(ar)}{s(r)} < \infty\ \textnormal{for some}\ a>1,\label{2e}\\
	&&\limsup_{r\to\infty}\frac{\phi(ar)}{\phi(r)} < \infty\ \textnormal{for some}\ a>1,\label{3e}	
	\end{eqnarray}
are equivalent. Indeed, one just has to verify that the inequalities in the proof of \cite[Lemma~A-1-1]{Neveu}
are valid for all $r$ large enough as opposed to all $r>0$.

This observation gives a new description of the assumption \eqref{Young-condition} in Lemma~\ref{canonical} in the sense that
if one of \eqref{1e},  \eqref{2e} or \eqref{3e} holds, then \eqref{Young-condition} holds by means of \eqref{assumption} and \eqref{1e}.

\vskip 2mm
(c)\,Suppose that $\varphi(r)$ is subadditive and that $\limsup_{r\to\infty}\frac{s(r)}{r}<\infty$. By \eqref{sub-varphi}, the conclusion in Lemma~\ref{canonical} will be replaced by
	$$
	\log\frac{1}{|P(z)|}=O\left(\varphi(r)^{\lambda+\varepsilon}\log r\right),\quad |z|=r.
	$$
\end{remark}

It is well-known from the second fundamental theorem that, for a transcendental meromorphic function $f$, $T(r,f)$ is typically dominated by three integrated counting functions. The next result shows that when $f$ is of finite $\varphi$-order, $T(r,f)$ can be dominated by two integrated counting functions. It reduces to \cite[Theorem~7.1]{Chern} when choosing $\varphi(r)=\log r$.

\begin{lemma}\label{aux-lemma}
Suppose that $f$ is a transcendental meromorphic function of finite $\varphi$-order $\rho_\varphi(f)$. Let $\lambda_1$ and $\lambda_2$ be the finite $\varphi$-exponent of convergence of  the zeros and the poles of $f$, respectively. Let $\varphi(r)$ be differentiable such that the following two cases occur simultaneously:
\begin{itemize}
\item[\textnormal{(a)}]
    $
    \limsup_{r\to\infty}\frac{\varphi'(r)r}{\varphi(r)}<\frac{1}{\lambda_1}
    $
 or
    $
    \liminf_{r\to\infty}\frac{\varphi'(r)r}{\varphi(r)}\geq\frac{1}{\lambda_1}
    $ 
    holds,  
\item[\textnormal{(b)}]
     $
    \limsup_{r\to\infty}\frac{\varphi'(r)r}{\varphi(r)}<\frac{1}{\lambda_2}
    $
or 
    $
    \liminf_{r\to\infty}\frac{\varphi'(r)r}{\varphi(r)}\geq\frac{1}{\lambda_2}
    $
    holds.
\end{itemize}
Suppose further that $\varphi(r)$ is subadditive such that the limits
    $
    \lim_{r\to\infty}\frac{\log \varphi(r)}{\log \varphi(r^2)}
    $
     and \eqref{lim-gamma=beta} exist. Let $\beta_\varphi$ be the constant in \eqref{liminf} and $\varepsilon>0$. Then for any two distinct extended complex values $a$ and $b$,
    \begin{equation}\label{4.6-1}
    T(r,f)\leq N\left(r,a,f\right)+N\left(r,b,f\right)+O\left(\varphi(r)^{\rho_\varphi(f)-\beta_\varphi+\varepsilon}\right)+O(\log r).
    \end{equation}
    Furthermore, if $\rho_\varphi(f)-\mu_\varphi(f)<\beta_\varphi$ and $\beta_\varphi>0$, then 
    $$
    T(r,f)\leq N\left(r,a,f\right)+N\left(r,b,f\right)+o(T(r,f)),
    $$
    where $\mu_\varphi(f)=\liminf_{r\to\infty}\frac{\log T(r,f)}{\log\varphi(r)}$.
    \end{lemma}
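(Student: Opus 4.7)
The plan is to follow the strategy Chern uses for the logarithmic order in \cite[Theorem~7.1]{Chern}, upgraded to the $\varphi$-order by means of Lemma~\ref{canonical} and Remark~\ref{N=n+beta}. The global restriction \eqref{global-rho<1} gives $\rho(f)<1$, so every canonical product built from a sequence with finite $\varphi$-exponent of convergence automatically has genus $0$. First I would pass to the M\"obius image $h(z):=(f(z)-a)/(f(z)-b)$, with the evident modifications if $a$ or $b$ equals $\infty$, so that $T(r,h)=T(r,f)+O(1)$, $N(r,0,h)=N(r,a,f)$, $N(r,\infty,h)=N(r,b,f)$, and $\rho(h)=\rho(f)<1$. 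Hadamard factorization then writes
$$
h(z)=Cz^{k}\frac{P_a(z)}{P_b(z)},
$$
where $P_a,\,P_b$ are genus-$0$ canonical products over the $a$- and $b$-points of $f$.

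Next, from $T(r,f)=m(r,h)+N(r,b,f)+O(1)$, the bound $m(r,h)\le m(r,P_a)+m(r,1/P_b)+O(\log r)$, and Jensen's formula applied to the entire function $P_a$ (giving $m(r,P_a)=T(r,P_a)=N(r,a,f)+m(r,1/P_a)+O(1)$), one arrives at
\begin{equation}\label{plan-main}
T(r,f)\le N(r,a,f)+N(r,b,f)+m(r,1/P_a)+m(r,1/P_b)+O(\log r).
\end{equation}
So the problem reduces to estimating $m(r,1/P_a)+m(r,1/P_b)$.

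I would carry this out via Lemma~\ref{canonical} with $s(r)=2r$. Since $\varphi$ is subadditive and $\limsup_{r\to\infty} s(r)/r=2<\infty$, Remark~\ref{radii-of-discs}(c) replaces $\varphi(s(r))$ by $\varphi(r)$ in the conclusion of that lemma, while, with $s'(r)=2$, assumptions (a) and (b) of the present lemma reduce to the alternatives \eqref{assumption-limsup} or \eqref{assumption-limsup<liminf} at the relevant exponents. The assumed existence of $\lim_{r\to\infty}\log\varphi(r)/\log\varphi(r^2)$ and of the limit \eqref{lim-gamma=beta} activates Remark~\ref{N=n+beta}(a), yielding $\rho_\varphi(N)=\rho_\varphi(n)+\beta_\varphi$; combined with $N(r,a,f)\le T(r,f)+O(1)$, this forces the $\varphi$-exponents $\lambda_a,\lambda_b$ of the $a$- and $b$-point sequences of $f$ to satisfy $\lambda_a+\beta_\varphi,\,\lambda_b+\beta_\varphi\le\rho_\varphi(f)$. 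Combining the pointwise bound of Lemma~\ref{canonical} with a standard passage from this pointwise estimate (outside an $R$-set of exceptional discs) to the integrated mean (cf.\ \cite[Ch.~2]{Laine}), I expect to obtain
$$
m(r,1/P_a)+m(r,1/P_b)=O\bigl(\varphi(r)^{\rho_\varphi(f)-\beta_\varphi+\varepsilon}\bigr)
$$
for $r$ outside an exceptional set of finite linear measure; inserting this in \eqref{plan-main} proves \eqref{4.6-1}.

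The furthermore assertion follows by choosing $\varepsilon>0$ so small that $\rho_\varphi(f)-\beta_\varphi+\varepsilon<\mu_\varphi(f)-\varepsilon$, which is possible because $\rho_\varphi(f)-\mu_\varphi(f)<\beta_\varphi$; then $\varphi(r)^{\rho_\varphi(f)-\beta_\varphi+\varepsilon}=o(T(r,f))$ by the definition of $\mu_\varphi$, and $\log r=o(T(r,f))$ holds for any transcendental meromorphic $f$. The main technical hurdle is extracting the \emph{sharp} exponent $\rho_\varphi(f)-\beta_\varphi+\varepsilon$ for $m(r,1/P_a)+m(r,1/P_b)$: Lemma~\ref{canonical} delivers the pointwise estimate with an extraneous $\log r$ factor (which by $\log r=O(\varphi(r)^{\beta_\varphi+\varepsilon})$ threatens to inflate the exponent by $\beta_\varphi$), and converting that into the sharp integrated bound requires carefully pairing the inequality $\lambda_a+\beta_\varphi\le\rho_\varphi(f)$ supplied by Remark~\ref{N=n+beta}(a) — which is exactly what the two limit-existence assumptions on $\varphi$ are used for — with a tighter handling of the $R$-set contributions, possibly by rewriting $m(r,1/P)$ as $T(r,P)-N(r,0,P)+O(1)$ via Jensen and invoking the asymptotic $T(r,P)\asymp N(r,0,P)$ available for canonical products of order less than $1$.
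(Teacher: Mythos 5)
Your proposal departs from the paper's proof at exactly the step you flag as the ``main technical hurdle,'' and that hurdle is not resolved. The paper does \emph{not} invoke Lemma~\ref{canonical}: after factoring $f=Cz^{m}P_{1}/P_{2}$ (genus zero, by \eqref{global-rho<1}), it applies the Borel--Valiron inequality
\[
T(r,f)\le \log M(r,P_{1})+\log M(r,P_{2})+O(\log r),
\]
and for each genus-zero canonical product $P_{i}$ the classical Borel bound
\[
\log M(r,P_{i})\le \int_{0}^{r}\frac{n(t,1/P_{i})}{t}\,dt+r\int_{r}^{\infty}\frac{n(t,1/P_{i})}{t^{2}}\,dt.
\]
The first integral is $N(r,1/P_{i})+O(\log r)$, while the tail $r\int_{r}^{\infty}n(t)/t^{2}\,dt$ is estimated by the same L'H\^opital computation used for $\sum_{1}$ in the proof of Lemma~\ref{canonical}, now driven by hypotheses (a), (b) on $\varphi'(r)r/\varphi(r)$, which yields $O(\varphi(r)^{\lambda_{i}+\varepsilon})$ with \emph{no} extra $\log r$ factor and \emph{no} exceptional set; combined with $\rho_{\varphi}(n(r,\cdot))\le\rho_{\varphi}(f)-\beta_{\varphi}$ from Remark~\ref{ilpo}(b) and Lemma~\ref{N-lemma}, this produces the sharp exponent $\rho_{\varphi}(f)-\beta_{\varphi}+\varepsilon$, and the general $(a,b)$ version follows from $T(r,(f-a)/(f-b))=T(r,f)+O(1)$.

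Your route through Lemma~\ref{canonical} inherits an extraneous $\log r$ in the pointwise estimate, which, as you correctly observe, would inflate the exponent by up to $\beta_{\varphi}$ after integration, plus it forces you to manage an $R$-set. The fix you sketch --- rewriting $m(r,1/P)$ as $T(r,P)-N(r,0,P)+O(1)$ via Jensen --- is indeed the right direction, but you then appeal to ``$T(r,P)\asymp N(r,0,P)$ for canonical products of order less than $1$,'' which is not true in general (it requires extra regularity of $n(t)$). What is actually needed at that point is precisely the inequality $\log M(r,P)-N(r,0,P)\le r\int_{r}^{\infty}n(t)/t^{2}\,dt+O(\log r)$ and the tail bound supplied by hypotheses (a), (b); without that, the proof has a genuine gap at the very step where those hypotheses must enter.
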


\begin{proof}
Let $P_1(z)$ and $P_2(z)$ be the canonical products formed from the zeros and the poles of $f$, respectively. It follows from the Hadamard's factorization theorem~\cite[p.~22]{Boas} and the global assumption \eqref{global-rho<1} that $f$ can be written as
    $$
    f(z)=Cz^m\frac{P_1(z)}{P_2(z)},
    $$
where $C>0$, $m\in\N\cup\{0\}$. Using the Borel-Valiron inequality \cite[p.~56]{GO} and Lemma~\ref{n-lemma}, we may proceed similarly as in Lemma~\ref{canonical},
     \begin{equation}\label{SMT}
    \begin{split}
    T(r,f)&\leq \log M(r,P_1)+\log M(r,P_2)+O(\log r)\\
    &\leq \int_0^r\frac{n(t,1/{P_1})+n(t,1/{P_2})}{t} dt\\
    &\quad+r\int_r^{\infty}\frac{n(t,1/{P_1})+n(t,1/{P_2})}{t^2}dt+O(\log r)\\
    &\leq N(r,1/f)+N(r,f)+O(\varphi(r)^{\sigma})+O(\log r),
    \end{split}
    \end{equation}  
    where $\sigma=\max\{\rho_\varphi(n(r,f)),\rho_\varphi(n(r,1/{f}))\}+\varepsilon$. 
By Remark~\ref{ilpo}(b) and \eqref{down} in Lemma~\ref{N-lemma}, we know $\rho_{\varphi}(n(r,f))\leq\rho_{\varphi}(f)-\beta_\varphi$, and similarly for $\rho_{\varphi}(n(r,1/f))$. Therefore, it follows from \eqref{SMT} that
    $$
    T(r,f)\leq N(r,1/f)+N(r,f)+O(\varphi(r)^{\rho_\varphi(f)-\beta_\varphi+\varepsilon})+O(\log r).
    $$   
 Moreover,  if $\rho_\varphi(f)-\mu_\varphi(f)<\beta_\varphi$ and $\beta_\varphi>0$, then 
 $$
 \varphi(r)^{\rho_\varphi(f)-\beta_\varphi+\varepsilon}=o(T(r,f)).
    $$
       Since $T\left(r,\frac{f-a}{f-b}\right)=T(r,f)+O(1)$, the assertion now follows.
\end{proof}

\begin{remark}
(a)\,Assuming that the limit $\lim_{r\to\infty}\frac{\varphi'(r)r}{\varphi(r)}$ exists, then (a) and (b) in Lemma~\ref{aux-lemma} hold for any given $\lambda_1>0$ and $\lambda_2>0$.

\vskip 2mm
(b)\,If $f$ is  entire in Lemma~\ref{aux-lemma}, then $\rho_\varphi(f)=\rho_\varphi(N(r,a,f))$, where $a\in\C$.

\vskip 2mm
(c)\,The condition $\beta_\varphi>0$ in Lemma~\ref{aux-lemma} implies a restriction on the growth of $\varphi(r)$. If $\rho_\varphi(f)<\infty$ is assumed, then $\rho(f)=0$. Indeed, if $\rho(f)>0$, then there exist $\varepsilon>0$ and an increasing sequence $(r_n)$ of positive real numbers tending to infinity such that 
    $$
    \frac{\log T(r_n,f)}{\log r_n}\geq \varepsilon>0,\quad n\in\N.
    $$
     On the other hand, the existence of \eqref{lim-gamma=beta} implies
     $$
     \frac{\log\log r}{\log\varphi(r)}\geq \frac{\beta_\varphi}{2}>0,\quad r\geq R_0.
     $$
Therefore,
     \begin{equation*}
     \begin{split}
     \rho_\varphi(f)&=\limsup_{r\to\infty}\frac{\log T(r,f)}{\log \varphi(r)}\geq \limsup_{r\to\infty}\frac{\beta_\varphi\log T(r,f)}{2\log\log r}   \\ 
     &\geq\limsup_{n\to\infty}\frac{\beta_\varphi\log T(r_n,f)}{2\log\log r_n}\geq\limsup_{n\to\infty}\frac{\varepsilon\beta_\varphi\log r_n}{2\log\log r_n}=\infty,
     \end{split}
     \end{equation*}
which is a contradiction.
Moreover, the error term in \eqref{4.6-1} might not be small compared to $T(r,f)$  if $\beta_\varphi=0$.
This happens, for example, in the case when $f(z)=e^z$, $a=0$, $b=\infty$ and $\varphi(r)=r$.
\end{remark}

We conclude this section with two examples, which show that if $\varphi(r)$ satisfies some specific conditions, then an entire function of
 $\varphi$-order satisfying \eqref{coro-rho} can be  constructed in terms of canonical products. Here, $\lambda$ denotes $\varphi$-exponent of convergence of a sequence $\{z_n\}$.

\begin{example}\label{positive-order-ex}
Suppose that $\varphi(r)$ is differentiable, strictly increasing and subadditive such that the limit \eqref{lim-gamma=beta}
exists and that 
    $
    \limsup_{r\to\infty}\frac{\varphi'(r)r}{\varphi(r)}<\frac{1}{\lambda}
    $ 
    holds for $\lambda=1/\kappa$, $0<\kappa<1$.  Then
there exists a $\tau>\kappa$ and an $R(\tau)>0$ such that
	$$
	\varphi(r)\leq r^{1/\tau},\quad r\geq R(\tau).
	$$
Then $\varphi^{-1}(r)\geq r^{\tau}$ for all $r\geq R(\tau)$. Define
    $$
    F(z)=\prod_{n=1}^\infty\left(1-\frac{z}{\varphi^{-1}(n^{1/\kappa})}\right).
    $$
We have
	$$
	\sum_{n=1}^\infty\frac{1}{\varphi^{-1}(n^{1/\kappa})}\leq \sum_{n^{1/\kappa}<R(\tau)}\frac{1}{\varphi^{-1}(n^{1/\kappa})}
	+\sum_{n^{1/\kappa}\geq R(\tau)}\frac{1}{n^{\tau/\kappa}}<\infty.
	$$
This guarantees that $F(z)$ is an entire function. Moreover, the $\varphi$-exponent
of convergence of the zeros of $F(z)$ is
    $$
    \lambda_{\varphi}(F)=\inf\bigg\{\mu>0: \sum_n\frac{1}{(n^{1/\kappa})^\mu}<\infty\bigg\}=\kappa.
    $$
Hence,  Lemmas~\ref{n-lemma} and~\ref{aux-lemma}
yield $\rho_{\varphi}(F)=\rho_{\varphi}(N(r,1/F))=\kappa+\beta_\varphi$.
 \end{example}

 \begin{example}
Suppose that $\varphi(r)$ is differentiable, strictly increasing and subadditive such that the limit \eqref{lim-gamma=beta}
exists and that either     
    $
    \limsup_{r\to\infty}\frac{\varphi'(r)r}{\varphi(r)}<\frac{1}{\lambda}
    $
 or
    $
    \liminf_{r\to\infty}\frac{\varphi'(r)r}{\varphi(r)}\geq\frac{1}{\lambda}
    $ 
     holds. For a given $c>1$, define
    $$
    G(z)=\prod_{n=1}^\infty\left(1-\frac{z}{\varphi^{-1}(c^n)}\right).
    $$
From \eqref{general-restriction} we find that $r\leq \varphi^{-1}(r)\leq e^r$ for all $r\geq R_0$.
Similarly as in Example~\ref{positive-order-ex}, we see that $G(z)$ is entire, and that
    $$
    \lambda_{\varphi}(G)=\inf\bigg\{\mu>0: \sum_n\frac{1}{(c^{n})^\mu}<\infty\bigg\}=0.
    $$
Moreover, $\rho_{\varphi}(G)=\rho_{\varphi}(N(r,1/G))=\beta_\varphi$.
\end{example}


\section{Proof of Theorem \ref{growth-coe}}\label{proof of Th4.1}

(a)\,Trivially we may suppose that $\alpha_{\varphi,s}>0$ and $\rho_\varphi(f)<\infty$. Moreover, we suppose first that the coefficients $a_0(z),\ldots,a_n(z)$ are entire.
Let $\varepsilon\in (0,\alpha_{\varphi,s})$ be small such that $\rho_\varphi(a_i)\geq\displaystyle\max_{j\neq i}\{\rho_\varphi(a_j)\}+2\varepsilon$.
From the definitions of $\alpha_{\varphi,s}$ and $\gamma_{\varphi,s}$, we have
	$$
	\varphi(r)\geq \varphi(s(r))^{\alpha_{\varphi,s}-\varepsilon^2}
	\quad\textnormal{and}\quad
	\log \frac{s(r)}{r}\geq \varphi(r)^{\gamma_{\varphi,s}-\frac{\varepsilon}{3\alpha_{\varphi,s}}}.
	$$
for all $r\geq R_0$. We divide \eqref{diffeqn} by $f(q^iz)$ and use Lemma~\ref{q-difference-lemma}(a) to get
	\begin{equation}\label{es-m-ai}
	\begin{split}
	m(r,a_i)\leq& n\max_{j\neq i}\{m(r,a_j)\}
	+O\left(\frac{\varphi(s(r))^{\rho_\varphi(f(q^iz))+\frac{\varepsilon}{3}}}{\log\frac{s(r)}{r}}\right)\\
	\leq& O\left(\varphi(r)^{\rho_\varphi(a_i)-\veps}\right)
	+O\left(\frac{\varphi(r)^{\frac{1}{\alpha_{\varphi,s}}(\rho_\varphi(f(q^iz))
	+\frac{2\varepsilon}{3})}}{\log\frac{s(r)}{r}}\right)\\
	\leq& O\left(\varphi(r)^{\rho_\varphi(a_i)-\veps}\right)
	+O\left(\varphi(r)^{\frac{\rho_\varphi(f(q^iz))}{\alpha_{\varphi,s}}-\gamma_{\varphi,s}+\frac{\varepsilon}{\alpha_{\varphi,s}}}\right)
	\end{split}
	\end{equation}
for all $r\geq R_0$. We recall from \cite[p.~249]{BIY} that, for a meromorphic function $h(z)$,
 	$
	T(r,h(Cz))=T(|C|r,h(z))+O(1)
	$
	holds for every $C\in\C\setminus\{0\}$,
where the $O(1)$-term depends on $C$. Then the subadditivity of $\varphi(r)$ yields 
    \begin{equation}\label{BIY-C}
    \rho_\varphi(h(Cz))=\rho_\varphi(h(z)),\quad C\in\C\setminus\{0\}.
    \end{equation}
It follows from \eqref{es-m-ai} and \eqref{BIY-C} that
    \begin{equation}\label{m-rho-estimate-new}
    m(r,a_i)= O\left(\varphi(r)^{\rho_\varphi(a_i)-\veps}\right)
	+O\left(\varphi(r)^{\frac{\rho_\varphi(f)}{\alpha_{\varphi,s}}-\gamma_{\varphi,s}+\frac{\varepsilon}{\alpha_{\varphi,s}}}\right).
	\end{equation}
Since there exists a sequence $\{r_n\}$ of positive real numbers tending
to infinity such that $m(r_n,a_i)\geq \varphi(r_n)^{\rho_\varphi(a_i)-\frac{\varepsilon}{2}}$,
    $$
    \rho_{\varphi}(a_i)-\frac{\varepsilon}{2}\leq \frac{\rho_{\varphi}(f)}{\alpha_{\varphi,s}}-\gamma_{\varphi,s}
    +\frac{\varepsilon}{\alpha_{\varphi,s}}.
    $$
Since $\varepsilon>0$ is arbitrarily small, the assertion follows.

Suppose then that some of the coefficients $a_0(z),\ldots,a_n(z)$
have poles. We divide \eqref{diffeqn} again by $f(q^iz)$ and apply \eqref{BIY-C}  to get
    \begin{align*}
	N(r,a_i) &\leq n\max_{j\neq i}\{N(r,a_j)\}+O(T(r,f))+O(T(r,f(q^nz)))\\
	&\leq O\left(\varphi(r)^{\rho_\varphi(a_i)-\veps}\right)+O\left(\varphi(r)^{\rho_\varphi(f)+\varepsilon}\right).
	\end{align*}
 Combining this with \eqref{m-rho-estimate-new}, we have
    $$
    \rho_{\varphi}(a_i)-\frac{\varepsilon}{2}\leq
    \max\left\{\frac{\rho_{\varphi}(f)}{\alpha_{\varphi,s}}-\gamma_{\varphi,s},\rho_{\varphi}(f)\right\}+\frac{\varepsilon}{\alpha_{\varphi,s}},
    $$
where we may let $\varepsilon\to 0^+$.

(b)\,
Making use of Lemma~\ref{q-difference-lemma}(b) in  the proof of Case (a) above and following the same method,  it is easy to obtain the conclusion.


\section{Proof of Theorem \ref{rho f-rho a}}\label{proof of Theorem 4.2}

We need the following lemma for the proof.

\begin{lemma}{\rm(\cite[Lemma~3.3]{Heittokangas})}\label{Hei}
Let the coefficients $a_0(z),\ldots,a_{n+1}(z)$ of \eqref{diffeqn-non-homo} be meromorphic. If $f$ is a meromorphic solution of \eqref{diffeqn-non-homo}, then there exists a finite constant $C>0$ such that
    $$
    n(r,f)\leq C\left(\sum_{j=0}^{n+1}n(r,a_j)+n\left(r,\frac{1}{a_0}\right)\right)\log r,\quad r\geq R_0.
    $$
\end{lemma}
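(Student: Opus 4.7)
The plan is to derive a pointwise pole estimate at each point from the equation, sum it to obtain a recursive inequality for $n(r,f)$, and then iterate it $O(\log r)$ times to extract the logarithmic factor.

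First I would rewrite \eqref{diffeqn-non-homo} in the form
\begin{equation*}
a_0(z)f(z) = a_{n+1}(z) - \sum_{j=1}^n a_j(z)f(q^jz),
\end{equation*}
and analyze orders at an arbitrary point $z_0\in\C$ using $\operatorname{ord}_{z_0}(g+h)\geq\min\{\operatorname{ord}_{z_0}g,\operatorname{ord}_{z_0}h\}$. Writing $p(g,w)$ for the pole multiplicity and $z(g,w)$ for the zero multiplicity at $w$, this yields the local bound
\begin{equation*}
p(f,z_0) \leq z(a_0,z_0) + \sum_{j=0}^{n+1}p(a_j,z_0) + \max_{1\leq j\leq n}p(f,q^jz_0),
\end{equation*}
since at least one of the terms on the right of the rearranged equation must compensate the pole of $a_0f$ at $z_0$.

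Next I would sum the local estimate over all $z_0$ with $|z_0|\leq r$. Using $\sum_{|z_0|\leq r}p(f,q^jz_0)\leq n(|q|^jr,f)$ and bundling the coefficient contributions into $B(r):=n(r,1/a_0)+\sum_{j=0}^{n+1}n(r,a_j)$, one obtains a global recursion of the form
\begin{equation*}
n(r,f) \leq B(r) + \sum_{j=1}^n n(|q|^jr,f).
\end{equation*}
Since $|q|\neq 1$, we may assume $|q|<1$ (the case $|q|>1$ is handled symmetrically by solving for $f(q^nz)$ and substituting $w=q^nz$, which converts the equation to one with $1/|q|<1$ and $a_n$ in the role of $a_0$, up to harmless bookkeeping). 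Then every argument $|q|^jr$ is strictly smaller than $r$, and the recursion may be iterated $K:=\lceil\log r/\log(1/|q|)\rceil$ times so that the last argument drops below a fixed $R_0$ where $n(R_0,f)$ is a finite constant.

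The main obstacle is the iteration: a naive use of the ``sum'' form produces an exponential factor $n^K$ at step $K$, yielding only a polynomial bound. To get the claimed logarithmic factor I would instead iterate the ``max'' form. For each pole $z_0$ of $f$ with $|z_0|\leq r$, define the ancestor chain $z_0,\,z^{(1)},\,z^{(2)},\ldots$ by $z^{(k+1)}=q^{j^*(k)}z^{(k)}$, where $j^*(k)$ attains the maximum above at $z^{(k)}$. Since each step multiplies the modulus by at most $|q|$, the chain terminates in at most $K=O(\log r)$ steps at a point that is no longer a pole of $f$; telescoping gives $p(f,z_0)\leq\sum_{k=0}^{K-1}\beta_{z^{(k)}(z_0)}$ where $\beta_w:=z(a_0,w)+\sum_jp(a_j,w)$ is the local cost. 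Summing in $z_0$ and switching the order of summation, one must then bound $\sum_w\beta_w\cdot|\{z_0:w\text{ is on the chain of }z_0\}|$. The delicate point is controlling these descendant counts: this is done using that the parent map $z_0\mapsto q^{j^*(z_0)}z_0$ has fibers of cardinality at most $n$, that the tree depth is $\leq K$, and that the total $\beta$-mass is bounded by $B(r)$; careful rearrangement yields $n(r,f)\leq C\cdot K\cdot B(r)$, which is the stated estimate. The bookkeeping in this counting step is what I expect to be the hardest part, and is presumably where the original proof in \cite{Heittokangas} extracts the factor $\log r$ in place of the $n^{O(\log r)}$ that the crude bound would give.
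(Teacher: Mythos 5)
Your overall strategy — a pointwise pole estimate from the equation followed by iterating the ``max'' form of the recursion along backward $q$-orbits — is the right one and is essentially what the cited reference does. The local bound and the telescoping are fine, and you are right that the naive additive iteration $n(r,f)\leq B(r)+\sum_{j=1}^n n(|q|^jr,f)$ blows up and must be avoided. The gap is precisely in the counting step that you yourself flag as the hardest part, and the heuristic you offer for it would actually give the wrong answer.

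Specifically, you propose to bound $\sum_{w}\beta_w\cdot|\{z_0: w\text{ lies on the chain of }z_0\}|$ using that ``the parent map has fibers of cardinality at most $n$'' and ``the tree depth is $\leq K$''. Those two facts alone give only that the number of descendants of $w$ is at most $1+n+n^2+\cdots+n^K$, which is of polynomial order $r^{\log n/\log(1/|q|)}$ rather than $O(\log r)$; no amount of ``careful rearrangement'' with just these ingredients can recover the logarithm. The missing observation is structural: every application of the parent map multiplies the argument by some $q^{j}$ with $j\in\{1,\dots,n\}$, so every point on the chain of $z_0$ has the form $q^{m}z_0$ for some integer $m\geq 0$, and conversely $w$ can lie on the chain of $z_0$ only if $z_0=q^{-m}w$ for some $m\geq 0$. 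Thus all potential ``descendants'' of a fixed $w$ lie on the single discrete ray $\{q^{-m}w:m\in\N\}$, and among those, only those with $|q^{-m}w|\leq r$ are relevant, i.e.\ at most $\lfloor\log(r/|w|)/\log(1/|q|)\rfloor+1=O(\log r)$ of them. This collapses the apparent $n$-ary tree to a one-dimensional lattice and is exactly where the factor $\log r$ comes from. With that observation inserted, your argument closes: $\sum_w\beta_w\cdot O(\log r)=O(B(r)\log r)$, and the chain terminus lies in $|z|<r_1$ (the smallest pole modulus of $f$, positive after normalising $f(0)\neq\infty$), so $p(f,z^{(K)})=0$ and the telescoping does not leave a residual term proportional to $n(r,f)$.

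Two smaller points. First, the lemma as stated does not restrict $q$, but the iteration argument genuinely requires $|q|\neq 1$; this is harmless in the paper since the lemma is only invoked in the proof of Theorem~\ref{rho f-rho a}, where $|q|\neq 1$ is assumed. Second, in the reduction of $|q|>1$ to $|q|<1$ by solving for $f(q^nz)$, the zero-counting function that appears naturally is $n(r,1/a_n)$ rather than $n(r,1/a_0)$; one should either state the lemma symmetrically or record that the $|q|<1$ case with leading coefficient $a_0$ is the one being cited, which is how the paper in fact uses it.
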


We are now ready to give the proof of Theorem~\ref{rho f-rho a}.

It is proved in \cite[Theorem~1.1]{BIY} that any meromorphic solution $f$ of \eqref{diffeqn-non-homo} satisfies $T(r,f)=O(\log^2 r)$, when  $a_0,\ldots,a_{n+1}$ are rational. Thus \mbox{$\rho_\varphi(f)\leq 2\beta_\varphi$.} If at least one of  $a_0,\ldots,a_{n+1}$ is non-constant, it follows from Corollary~\ref{rho>alp.lamb-alp.gam} and $\alpha_{\varphi,s}>0$ that $\rho_\varphi/\alpha_{\varphi,s}-\gamma_{\varphi,s}\geq 0$, where $\rho_{\varphi}$ is defined in \eqref{rho-varphi-coe}. Then the assertion follows. If all of $a_0,\ldots,a_{n+1}$ are constants, then \cite[p.~249]{BIY} shows that any meromorphic solution $f$ of \eqref{diffeqn-non-homo} is rational, that is, $\rho_\varphi(f)\leq \beta_\varphi$.

Now we suppose that at least one of $a_0,\ldots,a_{n+1}$ is transcendental.
First, we estimate $N(r,f)$, which is the easy part of the proof. For a non-constant meromorphic function $h(z)$ of finite $\varphi$-order, it follows from Corollary~\ref{rho>alp.lamb-alp.gam}  that
    \begin{equation}\label{lambda-L}
    0\leq\lambda_\varphi\leq \frac{\rho_{\varphi}(h)}{\alpha_{\varphi,s}}-\gamma_{\varphi,s},\quad \alpha_{\varphi,s}\in(0,1],
    \end{equation}
   where $\lambda_{\varphi}$ is the $\varphi$-exponent of convergence of the
$a$-points of $h(z)$.
Then making use of \eqref{liminf}, \eqref{L}, \eqref{N geq n} and $\alpha_{\varphi,s}\in(0,1]$, we obtain
    \begin{equation}\label{n-h-up}
    \begin{split}
    n(r,h)&\leq \frac{N(s(r),h)}{\log\frac{s(r)}{r}}
    =O\left(\frac{\varphi(s(r))^{\rho_\varphi(h)+\frac{\varepsilon}{3}}}{\log\frac{s(r)}{r}}\right)\\
    &=O\left(\frac{\varphi(r)^{\frac{1}{\alpha_{\varphi,s}}(\rho_\varphi(h)+\frac{2\varepsilon}{3})}}{\log\frac{s(r)}{r}}\right)
    =O\left(\varphi(r)^{\frac{\rho_\varphi(h)}{\alpha_{\varphi,s}}-\gamma_{\varphi,s}+\frac{\varepsilon}{\alpha_{\varphi,s}}}\right) 
	\end{split}
    \end{equation}
 for all $r\geq R_0$.
Applying \eqref{n-h-up}  to any non-constant function among $1/a_0$ and $a_0,\ldots,a_{n+1}$, and then using \eqref{rho-varphi-coe},
\eqref{N leq n log r} and Lemma~\ref{Hei}, we have
    \begin{equation}\label{N-varphi order}
    N(r,f)\leq n(r,f)\log r+O(\log r)
    = O\left(\varphi(r)^{\frac{\rho_\varphi}{\alpha_{\varphi,s}}-\gamma_{\varphi,s}+\frac{\varepsilon}{\alpha_{\varphi,s}}}\log^2 r\right).
    \end{equation}

Second, we estimate $m(r,f)$, which is the laborious part of the proof. Assume first that the coefficients
$a_0,\ldots,a_{n+1}$ are entire and that $0<|q|<1$, and denote
$p=1/q$.
It follows from the Hadamard's factorization theorem~\cite[p.~22]{Boas} and the global assumption \eqref{global-rho<1} that  $a_0$ can be written as
    $
    a_0(z)=Cz^m P(z),
    $
       where $P(z)$ is the canonical product formed with the \mbox{non-zero} zeros of $a_0$, $C$ is a non-zero complex constant and $m$ is an integer.
If $a_0$ is a polynomial, then $\frac{1}{|a_0(z)|}=O(1)$, so we next consider the case that $a_0$ is transcendental.
Following the proof of \cite[Theorem~3.5]{Heittokangas} or \cite[Theorem~5.3]{ZT}, we apply Lemma~\ref{canonical} to conclude
    \begin{align*}
	\frac{1}{|a_0(z)|}=\frac{1}{|C|r^m|P(z)|}\leq\exp\left(\varphi(s(r))^{\lambda+\frac{\varepsilon}{2}}\log r\right),
	\quad |z|=r\geq R_0,
	\end{align*}
provided that $z$ lies outside of discs $D_j$ of radius $|z_j|^{-(\lambda+\varepsilon)}$ around the zeros $z_j$ of $a_0$,
and where $\lambda$ is the $\varphi$-exponent of convergence of $\{z_j\}$.
Then by using \eqref{liminf} and applying \eqref{lambda-L} to some $a_k$ of $\varphi$-order $\rho_\varphi$ for  $0\leq k\leq n+1$, we have  
    \begin{equation}\label{a0}
    \begin{split}
    \frac{1}{|a_0(z)|} &\leq \exp\left(\varphi(r)^{\frac{\lambda}{\alpha_{\varphi,s}}+\frac{\varepsilon}{\alpha_{\varphi,s}}}\log r\right)\\
    &\leq \exp\left(\varphi(r)^{J_0+\frac{\varepsilon}{\alpha_{\varphi,s}}}\log r\right),\quad |z|=r\geq R_0,
    \end{split}
    \end{equation}
where $z$ lies outside of the discs $D_j$ and
	$$
	J_0=\frac{\rho_{\varphi}}{\alpha_{\varphi,s}^2}-\frac{\gamma_{\varphi,s}}{\alpha_{\varphi,s}}\geq 0.
	$$

We proceed to prove that
    \begin{equation}\label{max-M-a-j}
    \max_{0\leq j\leq n+1}\{M(r,a_j)\}\leq \exp\left(\varphi(r)^{J_0+\varepsilon}\log r\right),\quad r\geq R_0.
    \end{equation}
Indeed, it follows from \eqref{varphi-logM} and \eqref{rho-varphi-coe} that
	\begin{equation}\label{step1}
	\max_{0\leq j\leq n+1}\{M(r,a_j)\} \leq
	\exp\left(\varphi(r)^{\rho_\varphi+\frac{\varepsilon}{2}}\right),\quad r\geq R_0.
	\end{equation}
From \eqref{L}, we get
	$\log\frac{s(r)}{r}\geq \varphi(r)^{\gamma_{\varphi,s}-\frac{\varepsilon}{2}}$ for all $r\geq R_0$,
	which in turn implies
    \begin{equation*}\label{liminf-definition-leq}
    \varphi(r)^{\frac{\rho_{\varphi}}{\alpha_{\varphi,s}}-\alpha_{\varphi,s} J_0-\frac{\varepsilon}{2}}
    \leq {\log\frac{s(r)}{r}}\leq \log r,\quad r\geq R_0,
    \end{equation*}
    provided that $s(r)\leq r^2$. Thus, it follows from $\alpha_{\varphi,s}\in(0,1]$ that
    $$
    \varphi(r)^{\rho_{\varphi}+\frac{\varepsilon}{2}}
    \leq \varphi(r)^{\alpha_{\varphi,s}^2J_0+\frac{(\alpha_{\varphi,s}+1)\varepsilon}{2}}\log^{\alpha_{\varphi,s}} r
    \leq \varphi(r)^{J_0+\varepsilon}\log r,
    $$
	which leads \eqref{max-M-a-j} by using \eqref{step1}.

There exists a $t\in(1,|p|)$ such that $f$ has no poles on the circles \mbox{$|z|=r=|p|^kt$ } for $k\in\N$ and, by Remark~\ref{radii-of-discs}(a),
these circles are outside of the discs $D_j$ of radius $|z_j|^{-(\lambda+\varepsilon)}$ centered at $z_j$. For $k\in \N\cup\{0\}$, define
    $$
    M_k=M(|p|^kt,f)+1.
    $$
Hence, we divide \eqref{diffeqn-non-homo} by $a_0$ to get
	$$
	|f(z)|\leq \frac{1}{|a_0(z)|}\left(\sum_{j=1}^n |a_j(z)||f(q^jz)|+|a_{n+1}(z)|\right).
	$$
Then by making use of \eqref{a0}, \eqref{max-M-a-j} and $\alpha_{\varphi,s}\in(0,1]$, we have
	\begin{align*}
    M(|p|^{k}t,f)&\leq M\left(|p|^kt,\frac{1}{a_0}\right)\left(\sum_{j=1}^{n}M(|p|^kt,a_j)M(|p|^{k-j}t,f)+M(|p|^kt,a_{n+1})\right)\\
    &\leq \exp\left(2\varphi(|p|^kt)^{J_0+\frac{\varepsilon}{\alpha_{\varphi,s}}}\log (|p|^kt)\right)
    \sum_{j=k-n}^{k-1}\left(M(|p|^{j}t,f)+1\right) \\
    &\leq n \exp\left(2\varphi(|p|^kt)^{J_0+\frac{\varepsilon}{\alpha_{\varphi,s}}}\log (|p|^kt)\right)M_{k-1}
    \end{align*}
    for $k\geq n$ large enough, say $k\geq k_0\geq n$. Thus
    $$
    M_{k} \leq (n+1)\exp\left(2\varphi(|p|^kt)^{J_0+\frac{\varepsilon}{\alpha_{\varphi,s}}}\log (|p|^kt)\right)M_{k-1},\quad k\geq k_0,
    $$
which implies
    $$
    \log  M_{k}\lesssim \varphi(|p|^kt)^{J_0+\frac{\varepsilon}{\alpha_{\varphi,s}}}\log (|p|^kt)  +\log M_{k-1},\quad k\geq k_0.
    $$
Inductively,
    \begin{equation*}
    \begin{split}
    \log  M_{k}&\lesssim k\varphi(|p|^kt)^{J_0+\frac{\varepsilon}{\alpha_{\varphi,s}}}\log (|p|^kt)  +\log M_{k_0}\\
    &\lesssim k\varphi(|p|^kt)^{J_0+\frac{\varepsilon}{\alpha_{\varphi,s}}}\log (|p|^kt),\quad k\geq k_0,
    \end{split}
    \end{equation*}
where we may write
	$$
	k=\frac{\log(|p|^kt)-\log t}{\log |p|}.
	$$
Therefore,
   \begin{equation}\label{m}
    m(|p|^{k}t,f)\leq \log M_k
    \lesssim \varphi(|p|^kt)^{J_0+\frac{\varepsilon}{\alpha_{\varphi,s}}}\log^2(|p|^kt),\quad k\geq k_0.
    \end{equation}

We now combine the estimates \eqref{N-varphi order} and \eqref{m}, and find that
    \begin{equation}\label{T-varphi-order}
    T(r,f)= O\left(\varphi(r)^{J_0+\frac{\varepsilon}{\alpha_{\varphi,s}}}\log^2 r\right)
    \end{equation}
holds for $r=|p|^kt$, $k\geq k_0$, where $J_0=\frac{\rho_{\varphi}}{\alpha_{\varphi,s}^2}-\frac{\gamma_{\varphi,s}}{\alpha_{\varphi,s}}\geq 0$ and $\alpha_{\varphi,s}\in(0,1]$.
Since $T(r,f)$ and $\varphi(r)^{J_0+\frac{\varepsilon}{\alpha_{\varphi,s}}}\log^2 r$ are increasing and $\varphi(r)$ is subadditive, there exists an integer $s\geq |p|$ such that for $r\in[|p|^kt,|p|^{k+1}t)$ and for $k\geq k_0$
     \begin{equation*}
     \begin{split}
      T(r,f)&\leq T(|p|^{k+1}t,f)\lesssim  \varphi(|p|^{k+1}t)^{J_0+\frac{\varepsilon}{\alpha_{\varphi,s}}}\log^2 (|p|^{k+1}t)\\
      &\lesssim \varphi(|p|^{k}t)^{J_0+\frac{\varepsilon}{\alpha_{\varphi,s}}}(2\log(|p|^{k}t))^2\lesssim  \varphi(r)^{J_0+\frac{\varepsilon}{\alpha_{\varphi,s}}}\log^2 r.
     \end{split}
     \end{equation*}
This implies that \eqref{T-varphi-order} holds for all $r$-values.
Thus,
    $$
    \rho_{\varphi}(f)\leq
    \frac{\rho_{\varphi}}{\alpha_{\varphi,s}^2}-\frac{\gamma_{\varphi,s}}{\alpha_{\varphi,s}}+\frac{\varepsilon}{\alpha_{\varphi,s}}+2\beta_\varphi.
    $$
Since $\varepsilon>0$ is arbitrary, the assertion \eqref{L-alpha-beta} is now proved in the case of entire coefficients for $0<|q|<1$.

If $|q|>1$, we appeal to a change of variable in \eqref{diffeqn-non-homo} by replacing $z$ with $s^nz$, where $s=1/q$. Then we proceed to consider the growth of meromorphic solutions of a  non-homogeneous $s$-difference equation of the form 
	\begin{equation*}
	\sum_{j=0}^na_{n-j}(s^nz)f(s^jz)=a_{n+1}(s^nz).
	\end{equation*}
 Applying \eqref{BIY-C} to $a_j(s^nz)$ for $0\leq j\leq n+1$, we have $\rho_\varphi(a_j(s^nz))=\rho_\varphi(a_j(z))$,  $0\leq j\leq n+1$. Hence,
     $
     \rho_\varphi=\max_{0\leq j\leq n+1}\{\rho_\varphi(a_j(s^nz))\}
     $ 
     follows from \eqref{rho-varphi-coe}.
We then make use of the proof of the case $0<|q|<1$ above, and the assertion \eqref{L-alpha-beta} follows in the case of entire coefficients for $|q|>1$.

Finally, suppose that the coefficients in \eqref{diffeqn-non-homo} are meromorphic. We claim that the coefficients
can be represented by quotients of entire functions of \mbox{$\varphi$-order} $\leq \rho_\varphi$.
Indeed, by the Miles-Rubel-Taylor theorem \cite[\S 14]{Rubel}, each $a_j(z)$ can be expressed in the form
    \begin{equation}\label{aj-mero}
    a_j(z)=\frac{P_j(z)}{Q_j(z)},\quad 0\leq j\leq n+1,
    \end{equation}
where $P_j(z)$, $Q_j(z)$ are entire functions and $A>0$ and $B>0$ are absolute constants such that, for all $r\geq R_0$,
    \begin{equation}\label{P-Q}
    T(r,P_j)\leq AT(Br,a_j)\quad\text{and}\quad T(r,Q_j)\leq AT(Br,a_j).
    \end{equation}
Let $m$ be the smallest integer such that $m\geq B$. Using the subadditivity of $\varphi(r)$, we have $\varphi(Br)\leq \varphi(mr)\leq m \varphi(r)$.
Then \eqref{aj-mero} and \eqref{P-Q} yield $\max\{\rho_{\varphi}(P_j),\rho_{\varphi}(Q_j)\}= \rho_{\varphi}(a_j)\leq \rho_{\varphi}$.

By multiplying away the denominators, \eqref{diffeqn-non-homo} can be re-written in the form
    	\begin{equation*}
	\sum_{j=0}^nb_j(z)f(q^jz)=b_{n+1}(z),
	\end{equation*}
where the new coefficients $b_0(z),\ldots,b_{n+1}(z)$ are entire functions such that
    $$
  	\widehat{\rho}_\varphi = \max_{0\leq j\leq n+1}\{\rho_\varphi(b_j)\}\leq \rho_{\varphi}.
    $$
Then we proceed similarly as above and conclude the assertion \eqref{L-alpha-beta} in the case of meromorphic coefficients.
This completes the proof.

\section*{Funding}

The second author was supported by National Natural Science Foundation of China (No.~11771090).
The third author was supported by the National Natural Science Foundation of China (No.~11971288 and No.~11771090) and Shantou University SRFT (NTF18029).
The fourth author would like to thank the support of the China Scholarship Council (No.~201806330120).

\end{document}